\swapnumbers \numberwithin{equation}{section}
\theoremstyle{plain}
\newtheorem{thm}{Theorem}[section]
\newtheorem{theorem}[thm]{Theorem}
\newtheorem{lemma}[thm]{Lemma}
\newtheorem{conjec}[thm]{Conjecture}
\newtheorem{prop}[thm]{Proposition}
\newtheorem{cor}[thm]{Corollary}
\theoremstyle{definition}
\newtheorem{defn}[thm]{Definition}
\newtheorem{remark}[thm]{Remark}
\newtheorem{ex}[thm]{Example}
\DeclareMathOperator{\cat}{\mathsf{cat}}
\DeclareMathOperator{\TC}{\mathsf{TC}}
\DeclareMathOperator{\dcat}{\mathsf{dcat}}
\DeclareMathOperator{\dTC}{\mathsf{dTC}}
\DeclareMathOperator{\secat}{\mathsf{secat}}
\DeclareMathOperator{\dsecat}{\mathsf{dsecat}}
\DeclareMathOperator{\cd}{{\rm cd}}
\DeclareMathOperator{\Ker}{{\rm Ker}}
\DeclareMathOperator{\supp}{{\rm supp}}
\DeclareMathOperator{\ATC}{{\rm ATC}}
\DeclareMathOperator{\acat}{{\rm acat}}
\newcommand \pa[2]{\frac{\partial #1}{\partial #2}}
\def\scr{\mathcal}
\def\B{{\scr B}}
\def\C{{\mathbb C}}
\def\Z{{\mathbb Z}}
\def\Q{{\mathbb Q}}
\def\R{{\mathbb R}}
\def\1{\hbox{\rm\rlap {1}\hskip.03in{\rom I}}}
\def\Bbbone{{\rm1\mathchoice{\kern-0.25em}{\kern-0.25em}
{\kern-0.2em}{\kern-0.2em}I}}
\def\pa{\partial}
\def\wt{\widetilde}
\def\wh{\widehat}
\def\ov{\overline}
\long\def\forget#1\forgotten{} %
\newcommand\ver[1]{\marginpar{\tiny Changed in Ver \VER}}
\date{\today}
\begin{document}

\title[Sequential Distributional Topological Complexity]{On Sequential Versions of Distributional Topological Complexity}

\author[Ekansh Jauhari]{Ekansh Jauhari}

\address{Ekansh Jauhari, Department of Mathematics, University
of Florida, 358 Little Hall, Gainesville, FL 32611-8105, USA.}
\email{ekanshjauhari@ufl.edu}

\subjclass[2020]
{Primary 55M30; Secondary 68T40, 70B15.}

\keywords{}

\begin{abstract}
We define a (non-decreasing) sequence $\{\dTC_m(X)\}_{m\ge 2}$ of sequential versions of distributional topological complexity ($\dTC$) of a space $X$ introduced by Dranishnikov and Jauhari~\cite{DJ}. This sequence generalizes $\dTC(X)$ in the sense that $\dTC_2(X) = \dTC(X)$, and is a direct analog to the well-known sequence $\{\TC_m(X)\}_{m\ge 2}$. We show that like $\TC_m$ and $\dTC$, the sequential versions $\dTC_m$ are also homotopy invariants. Furthermore, $\dTC_m(X)$ relates with the distributional LS-category ($\dcat$) of products of $X$ in the same way as $\TC_m(X)$ relates with the classical LS-category ($\cat$) of products of $X$. On one hand, we show that in general, $\dTC_m$ is a different concept than $\TC_m$ for each $m \ge 2$. On the other hand, by finding sharp cohomological lower bounds to $\dTC_m(X)$, we provide various examples of closed manifolds $X$ for which the sequences $\{\TC_m(X)\}_{m\ge 2}$ and $\{\dTC_m(X)\}_{m\ge 2}$ coincide.  
\end{abstract}

%For arXiv:

%We define a (non-decreasing) sequence $\{\mathsf{dTC}_m(X)\}_{m\ge 2}$ of higher versions of distributional topological complexity ($\mathsf{dTC}$) of a space $X$ introduced by Dranishnikov and Jauhari. This sequence generalizes $\mathsf{dTC}(X)$ in the sense that $\mathsf{dTC}_2(X) = \mathsf{dTC}(X)$, and is a direct analog to the classical sequence $\{\mathsf{TC}_m(X)\}_{m\ge 2}$. We show that like $\mathsf{TC}_m$ and $\mathsf{dTC}$, the sequential versions $\mathsf{dTC}_m$ are also homotopy invariants. Also, $\mathsf{dTC}_m(X)$ relates with the distributional LS-category ($\mathsf{dcat}$) of products of $X$ in the same way as $\mathsf{TC}_m(X)$ relates with the classical LS-category ($\mathsf{cat}$) of products of $X$. On one hand, we show that in general, $\mathsf{dTC}_m$ is a different concept than $\mathsf{TC}_m$ for each $m \ge 2$. On the other hand, by finding sharp cohomological lower bounds to $\mathsf{dTC}_m(X)$, we provide various examples of closed manifolds $X$ for which the sequences $\{\mathsf{TC}_m(X)\}_{m\ge 2}$ and $\{\mathsf{dTC}_m(X)\}_{m\ge 2}$ coincide. 

%%% ----------------------------------------------------------------------

\keywords{Sequential distributional topological complexity, distributed navigation algorithm,  distributional sectional category, distributional Lusternik--Schnirelmann category, sequential topological complexity}

%%% ----------------------------------------------------------------------
\maketitle
%%% ----------------------------------------------------------------------

\section{Introduction}\label{Introduction}
In robotics, an autonomously functioning mechanical system, such as a robot, is typically required to move from a specified initial position to a specified desired position inside a configuration space in an optimal way. By optimal, we mean with the least possible\footnote{\hspace{0.5mm} The least possible number of discontinuities is determined by the topology of the configuration space in hand.} number of instabilities or discontinuities of motion. depending on the topology of the configuration space. This is a motion planning problem whose inputs are the positions and output is the motion of the system between them. In many practical situations, to obtain a more precise and controlled motion, such a system is additionally required to pass through a fixed number of intermediate positions at some specified timestamps. This adds more constraints to the motion planning problem.

This paper is motivated by the following sequential motion planning problem. Given an autonomous mechanical system with configuration space $X$, a number $m \ge 2$, and positions $x_1, x_2, \ldots, x_m\in X$, we want to construct a continuous algorithm to get from $x_1$ to $x_m$ via the $m-2$ intermediate positions $x_2, \ldots, x_{m-1}$ attained in that order for each ordered tuple $(x_1,x_2, \ldots, x_m) \in X^m$. While studying this problem for $m = 2$, M.~Farber~\cite{Far1} introduced the notion of the {\em topological complexity} of a space $X$, denoted  $\TC(X)$, as the minimal degree of instability of motion planning algorithms for systems whose configuration space is $X$. This notion was generalized for each $m \ge 2$ in a natural way by Y.~B.~Rudyak~\cite{Ru} to the $m$-th {\em sequential topological complexity} of the space, denoted $\TC_m(X)$. 

In a recent joint work with A. Dranishnikov~\cite{DJ}, we considered this problem for the case $m=2$ for some advanced autonomous systems that can break into a finite number of pieces at the initial position so that all the pieces travel independently to the desired final position where they reassemble back into the system. An example of such a system is the robot ``Terminator 2" from the eponymous movie. For such systems, the notion of the {\em distributional topological complexity} of a space $X$, denoted $\dTC(X)$, was introduced in~\cite{DJ} as the minimal number of pieces into which the system needs to break to be able to perform a continuous motion between all possible pairs of positions in the configuration space $X$. For some advanced systems that are capable of breaking and reassembling, depending on the configuration space of the system, $\dTC$ could an improvement of $\TC$. In this paper, we consider the generalized problem for $m \ge 2$ and introduce a natural generalization of $\dTC$, namely the $m$-th {\em sequential distributional topological complexity}, denoted $\dTC_m$, which, we think, offers a better solution to the above sequential motion planning problem for some advanced autonomous systems.

\subsection{Planning a sequential motion}
Consider an advanced autonomous system that is required to reach a position $x_{i} \in X$ at a time $t_{i} \in [0,1]$ for each $1 \le i \le m$, where $t_i < t_{i+1}$. The idea for the sequential motion of the system is as follows.
\vspace{-1.5mm}
\begin{enumerate}
        \itemsep-0.35em 
\item At time $t_1$, the system breaks into finitely many weighted pieces at the initial position $x_1$. All the pieces travel independently for time $t_2 - t_1$ to reach position $x_2$ at time $t_2$. 
\item At time $t_2$ and position $x_2$, all the pieces reassemble back into the system. Then, the system breaks again at $x_2$ into the same number of pieces having the same weights. These pieces travel to reach $x_3$ at time $t_3$. 
\item The process continues like this, where the system breaks into the same number of pieces having the same weights at time $t_i$ and position $x_i$, the pieces travel independently to reach $x_{i+1}$ at $t_{i+1}$, where they reassemble into the system, and then the system breaks again in the same manner.
\item Finally, the pieces reach the final position $x_m$ at time $t_m$ where they reassemble into the system at last.
\end{enumerate} 
We assume that our system above has weight $1$, i.e., the sum of the non-negative weights of the pieces involved is $1$. 

The ``weights of the pieces" represent the percentage of the portion of the original system in the corresponding pieces. For example, if the weights are $0.23, 0.16,$ and $0.61$, then the system breaks into pieces whose capacities/weights are $23\%, 16\%$, and $61\%$ of its original capacity/weight. In another sense, the ``weight of a piece" represents the \textit{probability} that at a given instance, the system is moving/traveling through that particular piece. More precisely, if we regard each piece $x_i$ of the system as a ``state" of its motion, then the weight $\lambda_i$ is the probability that the system is in state $x_i$. For a related probabilistic interpreptation in the case of $\TC$, we refer to the reader to~\cite[Section 12]{Far2}.

%This probabilistic interpretation is analogous to that in the case of For more details on similar probabilistic interpretations

The input for our system is an ordered $m$-tuple of positions. Note that we require our system to first analyze the $m$ positions and then decide the weighted pieces it needs for a continuous motion between those $m$ positions in the given order by the process of repetitive breaking and reassembling. This is different, less chaotic, and more convenient than letting the system decide its weighted pieces at each individual $t_i$ to travel between $x_i$ and $x_{i+1}$ for each $1 \le i \le m-1$.

Let $n \ge 1$ be the maximum number of pieces that the system can break into while traveling between all possible ordered $m$-tuples of positions in $X$. If no such $n$ exists, we conclude that the system is extremely complicated for practical purposes and we say that its corresponding sequential distributional topological complexity is infinite. Such a case is hardly found in real-world situations and is therefore less interesting, both from a physical as well as a theoretical point of view.

\subsection{Continuous motion planning algorithm}\label{contmot}
Let $\ov{x} = (x_1,\ldots,x_m) \in X^m$ be an ordered $m$-tuple. We define an {\em $n$-distributed $m$-sequence} of $\ov{x}$ to be an unordered collection of $n$ paths $\phi_j$ in $X$, with respective non-negative weights $\lambda_j$, such that
\vspace{-2mm}
\begin{enumerate}
        \itemsep-0.25em 
\item the sum of the weights is $1$, i.e., $\sum_{j=1}^n \lambda_j = 1$, and
\item for each $1 \le j \le n$, we have $\phi_j(t_i) = x_i$ for all $1 \le i \le m$.
\end{enumerate} 
Hence, the desired sequential motion planning algorithm is a continuous assignment of each $\ov{x} \in X^m$ to an $n$-distributed $m$-sequence of $\ov{x}$, which is an {\em unordered probability distribution}. For the case $m=2$, our above algorithm is in contrast to Farber's algorithm~\cite[Section 12]{Far2} corresponding to a probabilistic interpretation of $\TC$, whose output is an \textit{ordered} probability distribution. We note that as shown in~\cite[Section 13]{Far2}, the probabilistic definition of $\TC$ from~\cite{Far2} coincides with the original definition of $\TC$ from~\cite{Far1} for simplicial complexes.

The continuous algorithm we seek for the above problem is a natural generalization of the one for $m = 2$ from~\cite{DJ} in the following sense. For a metric space $Z$, let $\mathcal{B}(Z)$ denote the set of probability measures on $Z$ and 
$$
\mathcal{B}_{n}(Z) = \left\{\mu \in \mathcal{B}(Z) \mid |\supp(\mu)| \le n \right\}
$$
denote the space of probability measures on $Z$ supported by at most $n$ points, equipped with the Lévy--Prokhorov metric~\cite{Pr} (see also~\cite[Section 3.1]{DJ}). The Lévy--Prokhorov metric on $\B_n(Z)$ is defined by setting the distance between any two given probability measures $\mu,\nu\in \B_n(Z)$ as
\[
\inf\left\{\epsilon > 0 \hspace{1mm} \middle | \hspace{1mm} \mu(A) < \mu(A^{\epsilon}) + \epsilon \text{ and } \nu(A) < \nu(A^{\epsilon}) + \epsilon \text{ for all } A\in \mathscr{B}(Z)\right\},
\] 
where $\mathscr{B}(Z)$ is the Borel sigma algebra on $Z$ and $A^{\epsilon}$ is the $\epsilon$-neighborhood of $A$ defined as $A^{\epsilon}=\{x\in Z \mid d(x,y) < \epsilon \text{ for some } y\in A\}$. If $\mu \in \mathcal{B}_n(Z)$, then 
$$
\mu=\sum_{z\in F\subset Z,\ |F|\le n}\lambda_z z,
$$
where $\lambda_z\ge 0$ and $\sum\lambda_z=1$, and $\supp(\mu) =\{z\in Z\mid \lambda_z>0\}$. Let $Z = P(X) = \{f \hspace{1mm} | \hspace{1mm} f: [0,1] \to X \text{ is continuous}\}$ be the path space of $X$ with the compact-open topology, and let
$$
P(\ov{x}) = \{f \in P(X) \hspace{1mm} | \hspace{1mm} f(t_i) = x_i \text{ for all } 1 \le i \le m\}
$$
for any $\ov{x} \in X^m$. Then $\mathcal{B}_{n}(P(\ov{x}))$ is the space of all $n$-distributed $m$-sequences of $\ov{x}$. The desired $m$-navigation algorithm for an advanced system with configuration space $X$ is then a continuous map
$$
s_m : X^m \to \mathcal{B}_n(P(X))
$$ 
such that $s_m(\ov{x}) \in \mathcal{B}_{n}(P(\ov{x}))$ for all $\ov{x} \in X^m$. 

We note that the breaking of the system into $n$ pieces while traveling is like an $n$-th degree discontinuity of its motion. Since we are looking for the most optimal algorithm, we want to minimize this number $n$. This gives us the notion of the $m$-th {\em sequential distributional topological complexity} of a space $X$, denoted $\dTC_m(X)$.

\subsection{About this paper}
At this stage, we mention that typically, one takes 
$$
t_i = \frac{i-1}{m-1} \hspace{5mm} \text{for all} \hspace{5mm} 1 \le i \le m,
$$
as in~\cite{Ru},~\cite{LS}. So, in this paper, we will take $t_i = (i-1)/(m-1)$ to get equally-timed motions and we will prove all our results for these $t_i$. Thus, the sequential distributional topological complexity is discussed in this paper {\em only} in this sense. However, we note that with minor modifications, suitable analogs of most of our statements and results from Sections~\ref{Higher Distributional Complexity},~\ref{Cohomological Lower Bounds}, and~\ref{forhspaces} will also hold for any arbitrary $t_i \in [0,1]$, satisfying $t_i < t_{i+1}$ for all $1 \le i \le m-1$, in case of a more general, parametrized motion discussed, for example, in~\cite{CFW} and~\cite{FP}.

In this paper, we are considering the {\em normalized} version of $\TC_m$ in the sense that if $X$ is a contractible space, then $\TC_m(X) = 0$ for all $m \ge 2$. 

The paper is organized as follows. In Section~\ref{Preliminaries}, we set the ground for our paper and recall the notion and characterization of the classical sequential topological complexity, $\TC_m$, for $m \ge 2$. In Section~\ref{Higher Distributional Complexity}, we formally define sequential versions, $\dTC_m$, of distributional topological complexity for $m \ge 2$, prove their homotopy invariance, generalize results for $\dTC$ to $\dTC_m$, and compare these new numerical invariants $\dTC_m(X)$ for a space $X$ with $\TC_m(X)$ and the distributional category ($\dcat$)~\cite{DJ} of $X^{m-1}$ and $X^m$. Section~\ref{Cohomological Lower Bounds} is devoted to finding sharp lower bounds to $\dTC_m(X)$ in Alexander--Spanier cohomology, using the cohomology of the symmetric products of $X^m$, in a way slightly different from the one used to bound $\dTC(X)$ from below in~\cite{DJ}. In Section~\ref{charrr}, we formally define the notion of the distributional sectional category of a fibration, prove its homotopy invariance, and characterize $\dTC_m$ and $\dcat$ from that perspective. In Section~\ref{forhspaces}, we rigorously prove that $\dTC_{m+1}(X)$ and $\dcat(X^m)$ agree for all path-connected CW $H$-spaces $X$ for all $m \ge 1$. Finally, Section~\ref{estcomp} involves explicit computations and estimates of $\dTC_m$ for various classes of closed manifolds, thereby extending some known computations for $\dTC$. 

When the author finished writing this paper, he learned about the article~\cite{KW}, which introduces the notions of \textit{analog} LS-category $(\acat)$ and \textit{analog} sequential topological complexity ($\ATC_m$) of topological spaces with a different motivation and presents some interesting results, particularly for aspherical spaces. Knudsen and Weinberger conjectured in~\cite{KW} that their invariants (defined for compactly generated Hausdorff spaces) coincide with the distributional invariants on metrizable spaces. In this paper, we take a step in that direction and prove in Section~\ref{kwnew} that the distributional invariants give a sharp lower bound to the respective analog invariants on metrizable spaces. In particular, $\dcat(X) \leq \acat(X)$ and $\dTC_m(X) \le \ATC_m(X)$ for all $m \ge 2$. This is especially relevant because, unlike the case of $\dcat$ and $\dTC_m$, no non-trivial lower bounds to $\acat$ and $\ATC_m$ are known in general (even for finite CW complexes) at the time this paper was written. Using our computations from this paper, we also find the sequences $\{\ATC_m(X)\}_{m\ge 2}$ for some closed manifolds $X$. 

\subsection*{Notations and conventions} In this paper, the term \emph{space} refers to a path-connected metric space, \emph{map} refers to a topologically continuous function, and \emph{ring} refers to a commutative ring with unity.

\section{Preliminaries}\label{Preliminaries}
First, we prove an easy result that will be used in the subsequent sections. For any $X$, $m \ge 2$, and $a_i \in (1,\infty)$ such that $a_i > a_{i+1}$ for all $1 \le i \le m-2$, let
$$
T_{m}(X) = \left\{\left(f_{1}, \ldots, f_{m} \right) \in (P(X))^{m} \mid f_{i}(1) = f_{i+1}(0) \text{ for all } 1 \le i \le m-1 \right\}
$$
and $\theta_{m}: T_{m}(X) \to P(X)$ be defined as $\theta_{m}\left(f_{1}, \ldots, f_{m} \right) = f_{1} \star \cdots \star f_{m}$, where
$$
\left(f_{1} \star \cdots \star f_{m}\right)(t) = \begin{cases}
    f_{1}(a_{1}t) & : 0 \le t \le \frac{1}{a_{1}} \\
    f_{2}\left(\frac{a_{2}(a_{1}t-1)}{a_{1}-a_{2}}\right) & : \frac{1}{a_{1}} \le t \le \frac{1}{a_{2}}
    \\
    \hspace{6mm} \vdots & \hspace{8mm} \vdots 
    \\
    f_{m-1}\left(\frac{a_{m-1}(a_{m-2}\hspace{0.5mm} t-1)}{a_{m-2}-a_{m-1}}\right) & : \frac{1}{a_{m-2}} \le t \le \frac{1}{a_{m-1}} 
    \\
    f_{m}\left(\frac{1-a_{m-1}\hspace{0.5mm} t}{1-a_{m-1}}\right) & : \frac{1}{a_{m-1}} \le t \le 1
\end{cases}
$$

\begin{lemma}\label{iscont}
    The map $\theta_{m}$ is continuous for each $m \ge 2$.
\end{lemma}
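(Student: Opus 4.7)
The natural approach is to reduce continuity of $\theta_m$ to continuity of its adjoint via the exponential law. Since $I = [0,1]$ is compact Hausdorff, a map $g \colon Y \to P(X) = X^I$ (with the compact-open topology on the target) is continuous if and only if the adjoint $\tilde g \colon Y \times I \to X$ is continuous. Thus it suffices to verify that the evaluation $\tilde\theta_m \colon T_m(X) \times I \to X$, defined by $\tilde\theta_m((f_1,\ldots,f_m), t) = (\theta_m(f_1,\ldots,f_m))(t)$, is continuous.

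First I would partition $I$ into the closed subintervals $J_1 = [0, 1/a_1]$, $J_i = [1/a_{i-1}, 1/a_i]$ for $2 \le i \le m-1$, and $J_m = [1/a_{m-1}, 1]$. On each closed slab $T_m(X) \times J_i$, the adjoint $\tilde\theta_m$ factors as the composition of the projection $T_m(X) \times J_i \to P(X) \times J_i$ onto the $i^{\text{th}}$ path coordinate, a continuous affine reparametrization $J_i \to I$ (read off from the piecewise formula in the statement), and the evaluation map $\mathrm{ev} \colon P(X) \times I \to X$. Since $I$ is locally compact Hausdorff, $\mathrm{ev}$ is continuous; hence the restriction of $\tilde\theta_m$ to each slab is continuous.

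To conclude via the pasting lemma (the slabs are closed and cover $T_m(X) \times I$), I would verify that the two adjacent formulas agree on each common boundary $T_m(X) \times \{1/a_i\}$: the $i^{\text{th}}$ branch evaluates to $f_i(1)$ and the $(i+1)^{\text{th}}$ branch evaluates to $f_{i+1}(0)$, and these coincide by the very definition of $T_m(X)$. The pasting lemma then gives continuity of $\tilde\theta_m$ on $T_m(X) \times I$, and the adjunction yields continuity of $\theta_m$.

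The argument presents no real obstacle beyond careful bookkeeping of the reparametrization factors on each slab; the essential point is that the compatibility condition $f_i(1) = f_{i+1}(0)$ built into $T_m(X)$ is exactly what is needed to glue the slab-wise continuous maps into a globally continuous map. An alternative route would be induction on $m$, using the standard case $m = 2$ together with a reparametrization identifying $\theta_m$ with a suitable composite of $\theta_2$ and $\theta_{m-1}$, but this requires rescaling the $a_i$ in the inductive step and is marginally less clean than the direct pasting argument above.
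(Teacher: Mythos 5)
Your proof is correct, but it takes a different route than the paper. The paper works directly with the sub-basis of the compact-open topology: given $f\in T_m(X)$ and a sub-basic neighborhood $(K,W)$ of $\theta_m(f)$, it defines the compact sets $K_i$ as the images of $K\cap J_i$ under the same affine reparametrizations you use, and then checks that $V = T_m(X)\cap\prod_i(K_i,W)$ is a neighborhood of $f$ mapped into $(K,W)$. You instead invoke the exponential law (using only the easy direction, that continuity of the adjoint $T_m(X)\times I\to X$ implies continuity of the map into $P(X)=X^I$, which needs no hypotheses on $I$), reduce to a map with target $X$, and glue slab-wise with the pasting lemma; the continuity of evaluation $P(X)\times I\to X$, which you need for each slab, does use local compactness of $I$. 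The two proofs use the same decomposition of $I$ into the intervals $J_i$ and the same affine reparametrizations, so they are close in content; the difference is whether the gluing is done by assembling sub-basic open sets (the paper) or by the pasting lemma after adjunction (you). Your route is conceptually a little cleaner and avoids manipulating compact-open sub-basic sets directly, at the cost of citing the exponential law and continuity of evaluation; the paper's is more self-contained and elementary. Both correctly locate the essential point, namely that the compatibility conditions $f_i(1)=f_{i+1}(0)$ built into $T_m(X)$ are what make the piecewise formula well-defined and continuous across slab boundaries.
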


\begin{proof}
Let us take some $f = (f_{1},\ldots,f_{m}) \in T_{m}(X)$ and any sub-basis neighborhood $U \subset P(X)$ of $\theta_{m}(f)$. By definition of the compact-open topology, $U = (K,W) = \left\{\gamma \in P(X) \mid \gamma(K) \subset W\right\}$ for some compact $K \subset I$ and open $W \subset X$. Define sets $K_{1} = \left\{a_{1}t \mid t \in K \cap [0,1/a_{1}] \right\}$,
$$
K_{m} = \left\{\frac{1-a_{m-1}\hspace{0.5mm} t}{1-a_{m-1}} \hspace{1.5mm} \middle| \hspace{1.5mm} t \in K \cap \left[\frac{1}{a_{m-1}},1\right] \right\},
$$ 
and for all $2 \le i \le m-1$ the sets 
$$
K_{i} = \left\{\frac{a_{i}\left(a_{i-1}\hspace{0.5mm} t-1\right)}{a_{i-1}-a_{i}} \hspace{1.5mm} \middle| \hspace{1.5mm} t \in K \cap \left[\frac{1}{a_{i-1}},\frac{1}{a_{i}}\right] \right\}.
$$
It is easy to see that the sets $K_{i}$ are compact for each $1 \le i \le m$. Therefore,
$$
V = T_{m}(X) \cap \prod_{i=1}^{m}(K_{i},W)
$$
is open in $T_{m}(X)$. For any $(F_{1}, \ldots, F_{m}) \in T_{m}(X)$, by definition, $F_{i} \in (K_{i},W)$ for all $1 \le i \le m$ if and only if $F_{1} \star \cdots \star F_{m} \in (K,W)$. Hence, $V$ is a neighborhood of $f$ and $\theta_{m}(V) \subset U$. Thus, $\theta_m$ is continuous at $f \in T_m(X)$.
\end{proof} 

Let us recall the classical definitions of the Lusternik--Schnirelmann category \cite{Ja},~\cite{CLOT} and the sequential topological complexity~\cite{Far1},~\cite{Ru} of a space.

The \emph{Lusternik--Schnirelmann category} (LS-category) of a space $X$, denoted $\cat(X)$, is the minimal number $n$ such that there is a covering $\{U_i\}$ of $X$ by $n+1$ open sets each of which is contractible in $X$.

For given $m \ge 2$, the $m$-th \emph{sequential topological complexity} of a space $X$, denoted $\TC_m(X)$, is the minimal number $n$ such that there is a covering $\{U_i\}$ of $X^m$ by $n+1$ open sets over each of which there is a map $s_i : U_i \to P(X)$ such that for each $\ov{x} = (x_1,\ldots,x_m) \in U_i \subset X^m$, $s_i(\ov{x})(t_j) = x_j$ for all $1 \le j \le m$. 

We also recall the definition of distributional LS-category~\cite{DJ} of a space. Recall that a space $X$ is called $k$-contractible to a point $x_0 \in X$ if there is a continuous map $H:X\to \mathcal B_k(P(X))$ satisfying $H(x)\in\mathcal B_k(P(x,x_0))$ for all $x \in X$.

The {\em distributional LS-category} of a space $X$, denoted $\dcat(X)$, is the minimal number $n$ such that $X$ is $(n+1)$-contractible to some fixed basepoint $x_0 \in X$.

\subsection{Ganea--Schwarz's approach to $\TC_m$}\label{Preliminaries1}
Let $p: E \to B$ be a fibration. For any $n \ge 1$, the iterated fiberwise join of $n$-copies of $E$ along $p$, denoted $\ast^n_B \hspace{0.5 mm}E$, is defined as the space
$$
\ast^n_B \hspace{0.5 mm}E = \left\{\sum_{i=1}^{n} \lambda_{i} e_{i} \hspace{1mm} \middle| \hspace{1mm} e_{i} \in E, \hspace{0.5mm} \sum_{i=1}^{n}\lambda_{i} = 1, \hspace{0.5mm} \lambda_{i} \geq 0, \hspace{0.5mm} p\left(e_{i}\right)= p\left(e_{j}\right)\right\},
$$
where each element is a formal ordered linear combination of elements such that all terms of the form $0e_i$ are dropped from the sum. Similarly, the iterated fiberwise join of $n$-copies of $p$, denoted $\ast^n_B \hspace{0.5mm}p : \ast^n_B \hspace{0.5 mm}E \to B$, is defined as the fibration
$$
\ast^n_B \hspace{0.5mm}p \left(\sum_{i=1}^{n} \lambda_{i} e_{i}\right) = p\left(e_i \right)
$$ 
for any $i$ with $\lambda_{i} > 0$. Now, we recall the notion of the sectional category, also known as the Schwarz genus, of a fibration.

The \textit{sectional category} of a fibration $p: E \to B$, denoted $\secat(p)$, is the minimal number $n$ such that $B$ can be covered by $n+1$ open sets $W_i$ over each of which there exists a partial section $s_i:W_i\to E$ of $p$.

Let us fix some $m \ge 2$. Given a space $X$, let $\pi_m^X: P(X) \to X^m$ be the fibration defined by $\pi_m^X(\phi) = (\phi(t_1),\phi(t_2), \ldots, \phi(t_m))$. Then by definition, we have that $\TC_m(X) = \secat(\pi_m^X)$, see~\cite[Remark 3.2.5]{Ru}. 

The following statement gives Schwarz's characterization of $\secat$.

\begin{prop}[\protect{\cite[Proposition 8.1]{Ja}}]\label{newaddition}
 For a fibration $p:E\to B$, $\secat(p)\le n$ if and only if the fibration $\ast^{n+1}_B \hspace{0.5mm}p : \ast^{n+1}_B \hspace{0.5 mm}E \to B$ admits a section.
\end{prop}

%the fibration $\ast^{n+1}_B \hspace{0.5mm}p : \ast^{n+1}_B \hspace{0.5 mm}E \to B$ admits a section.

Let us denote the space $\ast^{n+1}_{X^m} \hspace{1mm} P(X)$ by $\mathcal{G}_{m,n}(X)$ and the fibration $\ast^{n+1}_{X^m} \hspace{1mm} \pi_m^X$ by $\pi_{m,n}^X$. Since $\TC_m(X) = \secat(\pi_m^X)$, we use Proposition~\ref{newaddition} to obtain the Ganea--Schwarz characterization of $m$-th sequential topological complexity as below.

\begin{theorem}\label{original2}
For any $X$, $\TC_m(X) \leq n$ if and only if the fibration
$$
\pi_{m,n}^X: \mathcal{G}_{m,n}(X) \to X^m
$$
admits a section.
\end{theorem}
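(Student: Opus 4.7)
The plan is to reduce Theorem~\ref{original2} to the classical Schwarz characterization of sectional category applied to the specific fibration $\pi_m^X : P(X) \to X^m$, by first showing that $\TC_m(X) = \secat(\pi_m^X)$. The key observation for this identification is that a partial section of $\pi_m^X$ over an open set $U \subset X^m$ is, by unwinding definitions, exactly a continuous map $s : U \to P(X)$ such that $s(\ov{x})(t_j) = x_j$ for every $\ov{x} = (x_1, \ldots, x_m) \in U$ and every $1 \le j \le m$. Thus a motion planning datum for $\TC_m(X) \leq n$ is precisely an open cover $\{U_i\}_{i=0}^n$ of $X^m$ with local sections of $\pi_m^X$.

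Granted this, it remains to establish Schwarz's theorem in the form: for a fibration $p : E \to B$ one has $\secat(p) \le n$ if and only if $\ast_B^{n+1} p : \ast_B^{n+1} E \to B$ admits a section. For the forward direction, I would choose a partition of unity $\{\phi_i\}_{i=0}^n$ subordinate to an open cover $\{U_i\}_{i=0}^n$ of $B$ equipped with local sections $s_i : U_i \to E$, and define
$$
\sigma(b) \;=\; \sum_{i \,:\, \phi_i(b) > 0} \phi_i(b)\, s_i(b) \;\in\; \ast_B^{n+1} E.
$$
Since the terms with $\phi_i(b) = 0$ are dropped, this formula is well-defined as a formal linear combination, lies in the correct fiber by construction (as $p(s_i(b)) = b$ for every contributing $i$), and is continuous because the $\phi_i$ are continuous and the local sections agree with $\sigma$ on each $U_i$ in the appropriate sense for the join topology.

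For the reverse direction, given a section $\sigma : B \to \ast_B^{n+1} E$, write $\sigma(b) = \sum_{i=0}^n \lambda_i(b)\, e_i(b)$ with the barycentric coordinates $\lambda_i : B \to [0,1]$ continuous and summing to $1$. Put $U_i = \{b \in B : \lambda_i(b) > 0\}$; these form an open cover of $B$, and on $U_i$ the $i$-th point $e_i(b) \in E$ is canonically and continuously extracted from $\sigma(b)$, producing a local section of $p$ over $U_i$. Combining the two directions with Step~$1$ yields the theorem upon specializing $p = \pi_m^X$.

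The main obstacle is the rigorous handling of the fiberwise join topology in the reverse direction. The space $\ast^{n+1}_B E$ is a quotient in which formal expressions $\sum \lambda_j e_j$ are identified when some $\lambda_j$ vanishes, so one must verify that the assignment $b \mapsto e_i(b)$ is genuinely continuous on the open stratum $\{\lambda_i > 0\}$, i.e., that the map sending an element of the join with positive $i$-th weight to its $i$-th coordinate is continuous in the quotient topology. This is where the choice of topology on $\ast_B^{n+1} E$ matters, and it is the technical step where Schwarz's original argument does the real work; the partition-of-unity direction, by contrast, is essentially a formal interpolation that presents no serious difficulty.
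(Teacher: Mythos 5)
The paper does not prove this theorem; it is cited directly from Schwarz \cite{Sch}, so there is no in-paper argument to compare against. Your reconstruction is the standard Schwarz argument and is correct in outline. A wording caution first: since the paper already \emph{defines} $\secat(p)$ via the join $\ast^{n+1}_B p$ admitting a section, the equality $\TC_m(X) = \secat(\pi_m^X)$ is not an independent intermediate step but is precisely the theorem itself. What your ``Step 1'' actually establishes is that the open-cover definition of $\TC_m$ coincides with the open-cover characterization of sectional category (existence of $n+1$ open sets over which $\pi_m^X$ admits partial sections), and what remains is Schwarz's equivalence of the open-cover characterization with the join characterization, which you then prove. Beyond that, two technicalities are worth stating explicitly. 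First, the partition-of-unity argument in the forward direction requires that $X^m$ admit partitions of unity subordinate to finite open covers; this holds because the paper works exclusively with metric spaces, so $X^m$ is metric, hence normal and paracompact, but you should say so. Second, the continuity claims in both directions rest on which topology one places on $\ast^{n+1}_B E$: with the strong (Milnor) topology, characterized by continuity of the barycentric coordinate functions $\lambda_i$ and of each coordinate projection $e_i$ on the open set $\{\lambda_i > 0\}$, the section $\sigma(b) = \sum_{\phi_i(b)>0}\phi_i(b)\,s_i(b)$ in the forward direction and the extraction $b \mapsto e_i(b)$ in the reverse direction are both continuous by construction. You correctly flag the latter as the locus of the genuine technical work; making the choice of topology explicit would close the gap.
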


\subsection{Cohomological lower bounds of $\TC_m$}
Given a space $X$ and ring $R$, the cup-length of the cohomology ring $H^*(X;R)$ is defined as the maximal length $k$ of a non-zero cup product $\alpha_1 \smile\cdots\smile\alpha_k\neq 0$ of cohomology classes $\alpha_i$ of positive degrees. For simplicity, we shall call it the cup-length of $X$ with coefficients in $R$.
%The cup-length of a space $X$ with coefficients in a ring $R$ (or alternatively, the cup-length of the cohomology ring $H^*(X;R)$) is the maximal length $k$ of a non-zero cup product $\alpha_1 \smile\cdots\smile\alpha_k\neq 0$ of cohomology classes $\alpha_i$ of positive dimensions. 
Let $\Delta : X \to X^m$ be the diagonal map that induces $\Delta^*: H^*(X^m;R) \to H^*(X;R)$. The elements of $\Ker(\Delta^*)$ are called the $m$-th {\em zero-divisors} of $X$. Then the cup-length of the ideal of the $m$-th zero-divisors of $X$ is a sharp lower bound of $\TC_m(X)$, see~\cite{Ru},~\cite{BGRT}.

We note that this lower bound can be obtained from a more general result of A. S. Schwarz, see~\cite[Theorem 4]{Sch}.

\section{Sequential distributional topological complexity}\label{Higher Distributional Complexity}
Given a metric space $Z$, we equip $\mathcal{B}_n(Z)$ with the Lévy--Prokhorov metric~\cite{Pr} (see Section~\ref{contmot}). We focus on the case $Z = P(X)$ when $X$ is a path-connected metric space. In this case, it is well-known that the compact-open topology on $P(X)$ is metrizable.

We recall that for any $\ov{x} = (x_1, \ldots, x_m) \in X^m$, a {\em $k$-distributed $m$-sequence} of $\ov{x}$ is an unordered collection of $k$ weighted paths $\phi_j$ in $X$ such that for each $1 \le j \le k$, $\phi_j(t_i) = x_i$ for all $1 \le i \le m$, where the weights are non-negative and the sum of the weights is $1$. Here, we have $t_i=(i-1)/(m-1)$ for each $i$.

\begin{defn}
A {\em $k$-distributed $m$-navigation algorithm} on a space $X$ is a map
$$
s_m : X^m \to \mathcal{B}_k(P(X))
$$ 
that satisfies $s_m(\ov{x}) \in \mathcal{B}_{k}(P(\ov{x}))$ for all $\ov{x} \in X^m$, in the notations of Section~\ref{contmot}. 
\end{defn}

\begin{defn}
For a given $m \ge 2$, the {\em $m$-th sequential distributional topological complexity}, or alternatively, the {\em $m$-th higher distributional topological complexity} of a space $X$, denoted $\dTC_m(X)$, is the minimal number $n$ such that $X$ admits an $(n+1)$-distributed $m$-navigation algorithm. 
\end{defn}

Each of the following results generalizes the respective statements for $\dTC$ proved in~\cite[Section 3]{DJ}.

\begin{prop}\label{homoinv}
For each $m \ge 2$, $\dTC_m$ is a homotopy invariant.
\end{prop}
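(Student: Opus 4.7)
The plan is to follow the classical pattern: prove the stronger statement that $\dTC_m$ is monotone under homotopy domination, so that a pair of homotopy inverses yields the desired equality $\dTC_m(X)=\dTC_m(Y)$. Concretely, I would fix continuous $f\colon X\to Y$ and $g\colon Y\to X$ with $g\circ f\simeq\id_X$ via a homotopy $H\colon X\times I\to X$ satisfying $H(\cdot,0)=gf$ and $H(\cdot,1)=\id_X$, and show $\dTC_m(X)\le\dTC_m(Y)$.

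Suppose $\dTC_m(Y)=n$ and fix an $(n+1)$-distributed $m$-navigation algorithm $s\colon Y^m\to\mathcal B_{n+1}(P(Y))$. Composing with $f^m\colon X^m\to Y^m$ gives a continuous $s\circ f^m\colon X^m\to\mathcal B_{n+1}(P(Y))$ supported, at each $\ov x=(x_1,\dots,x_m)$, on paths $\phi$ satisfying $\phi(t_i)=f(x_i)$ for every $i$. The obstruction is that after postcomposing with $g$, such a path passes through $gf(x_i)$ at time $t_i$ rather than through $x_i$. To correct this, on each sub-interval $[t_i,t_{i+1}]$ I would form the triple concatenation (suitably reparametrized) of the homotopy track $u\mapsto H(x_i,u)$ from $x_i$ to $gf(x_i)$, the restriction $g\circ\phi|_{[t_i,t_{i+1}]}$ joining $gf(x_i)$ to $gf(x_{i+1})$, and the reversed track $u\mapsto H(x_{i+1},1-u)$ returning to $x_{i+1}$. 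By Lemma~\ref{iscont}, applied with $m=3$ on each sub-interval and then patched across the $m-1$ sub-intervals, gluing these pieces with any admissible choice of reparametrization constants produces a jointly continuous map
$$
\Phi\colon X^m\times P(Y)\longrightarrow P(X),\qquad (\ov x,\phi)\longmapsto\psi,
$$
with $\psi(t_i)=x_i$ for all $i$ whenever $\phi(t_i)=f(x_i)$.

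The last step is to assemble the fibrewise pushforwards into a single continuous map
$$
\ov x\,\longmapsto\,\Phi(\ov x,\cdot)_{*}\bigl(s(f^m(\ov x))\bigr),
$$
which will be the desired $(n+1)$-distributed $m$-navigation algorithm on $X$, giving $\dTC_m(X)\le n$. Swapping the roles of $X$ and $Y$ then yields the reverse inequality, hence invariance. The main obstacle I expect is verifying the joint continuity of this measure-pushforward in the Lévy--Prokhorov metric, that is, that $(\ov x,\mu)\mapsto\Phi(\ov x,\cdot)_{*}\mu$ is continuous on $X^m\times\mathcal B_{n+1}(P(Y))$. This reduces to an equicontinuity estimate on the family $\{\Phi(\ov x,\cdot)\}$ restricted to the finite supports of individual measures, and should be a direct adaptation of the $m=2$ argument in~\cite[Section 3]{DJ}; everything else (the concatenation and the pullback by $f^m$) is essentially bookkeeping enabled by Lemma~\ref{iscont}.
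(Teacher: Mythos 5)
Your proposal is correct and follows essentially the same route as the paper's own proof: reduce to monotonicity under homotopy domination, transport the algorithm through $f^m$ and the pushforward, correct each of the $m-1$ subintervals by sandwiching with tracks of the homotopy, and glue with the reparametrized concatenation of Lemma~\ref{iscont}. The paper organizes the correction as $\wh\phi=(h_{y_1}\cdot f\phi_1\cdot\ov h_{y_2})\star\cdots\star(h_{y_{m-1}}\cdot f\phi_{m-1}\cdot\ov h_{y_m})$ and, like you, defers the joint continuity of the resulting measure-valued assignment to the functoriality arguments from \cite[Section 3]{DJ}.
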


\begin{proof}
Let us fix some $m \ge 2$. Let $f: X \to Y$ be a homotopy domination with a continuous right homotopy inverse $g: Y \to X$. We only need to prove that $\dTC_m(Y) \le \dTC_m(X)$. Since $fg \simeq \mathbbm{1}_{Y}$, there exists a homotopy $h: Y \to P(Y)$ defined as $h(y)= h_y$ for each $y \in Y$, where $h_y(0) = y$ and $h_y(1) = fg(y)$. Let $\dTC_m(X) = n$ and 
$$
s_m : X^m \to \mathcal{B}_{n+1}(P(X))
$$
be an $(n+1)$-distributed $m$-navigation algorithm on $X$. Let $f$ induce a continuous map $f_*: \mathcal{B}_{n+1}(P(X)) \to \mathcal{B}_{n+1}(P(Y))$ due to the functoriality of $\mathcal{B}_{n+1}$. Then the composition
$$
f_* s_m \hspace{1mm} g^m : Y^m \to \mathcal{B}_{n+1}(P(Y))
$$
maps each $\ov{y} \in Y^m$ to an $(n+1)$-distributed $m$-sequence of the ordered $m$-tuple $f^m g^m(\ov{y}) = \{fg(y_1), \ldots, fg(y_m)\}$. Let $s_m g^m(\ov{y}) = \sum \lambda_{\phi} \phi$. For each such path $\phi \in \supp(s_m g^m(\ov{y}))$, let us consider the collection $\{\phi_1, \ldots, \phi_{m-1}\}$, where 
$$
\phi_{i}(s) = \phi \left( \frac{s+i-1}{m-1} \right)
$$
for each $1 \le i \le m-1$. So, $\phi_{i}$ is a path in $X$ from $g(y_i)$ to $g(y_{i+1})$. Using these paths and the homotopy $h$, let us define
$$
\wh{\phi} = \left(h_{y_1} \cdot f\phi_1 \cdot \ov{h}_{y_2}\right) \star  \left(h_{y_2} \cdot f\phi_2 \cdot \ov{h}_{y_3}\right) \star \cdots \star \left(h_{y_{m-1}} \cdot f\phi_{m-1} \cdot \ov{h}_{y_m}\right),
$$ 
where $\cdot$ denotes the usual concatenation of paths and $\star$ denotes concatenation done using the map $\theta_{m-1}$ from Lemma~\ref{iscont} with $a_{i} = 1/t_{i+1} = (m-1)/i$ for all $1 \le i \le m-2$. Then the map $\sigma_m : Y^m \to \mathcal{B}_{n+1}(P(Y))$ defined by
$$
\sigma_m(\ov{y}) = \sum_{\phi \hspace{1mm} \in \hspace{1mm} \supp(s_m g^m(\ov{y}))} \hspace{1mm} \lambda_{\phi}\hspace{0.5mm}\wh{\phi}
$$
is an $(n+1)$-distributed $m$-navigation algorithm on $Y$. Therefore, we obtain the inequality $\dTC_m(Y) \leq n = \dTC_m(X)$.
\end{proof}

The following proposition is straightforward (see~\cite[Proposition 3.10]{DJ} for a proof in the special case $m=2$).

\begin{prop}\label{ineq1}
For each $m \ge 2$, $\dTC_m(X) \leq \TC_m(X)$.
\end{prop}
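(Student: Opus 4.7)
The plan is to mimic the standard passage from a genuine (sectional-category) motion planner to a distributional one via a partition of unity, generalizing the $m=2$ argument of \cite[Proposition 3.10]{DJ}.

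Suppose $\TC_m(X) = n$. By definition, there exist an open cover $\{U_0, U_1, \ldots, U_n\}$ of $X^m$ and continuous local sections $s_i : U_i \to P(X)$ with $s_i(\ov{x})(t_j) = x_j$ for each $\ov{x} = (x_1,\ldots,x_m) \in U_i$ and each $1 \le j \le m$. Since $X^m$ is metrizable, it is paracompact, so there is a partition of unity $\{\rho_i\}_{i=0}^n$ subordinate to $\{U_i\}$, with $\supp(\rho_i) \subset U_i$, $\rho_i \ge 0$, and $\sum_i \rho_i \equiv 1$.

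I would then define $\sigma_m : X^m \to \mathcal{B}_{n+1}(P(X))$ by
$$
\sigma_m(\ov{x}) \;=\; \sum_{i \,:\, \rho_i(\ov{x}) > 0} \rho_i(\ov{x})\, s_i(\ov{x}),
$$
dropping the terms of zero weight as in the description of $\mathcal{B}_n$. By construction, the weights are non-negative and sum to $1$, and every path appearing in the support of $\sigma_m(\ov{x})$ lies in $P(\ov{x})$ because each relevant $s_i$ is defined at $\ov{x}$ and satisfies the interpolation condition. Hence $\sigma_m(\ov{x}) \in \mathcal{B}_{n+1}(P(\ov{x}))$, and once continuity is established, $\sigma_m$ will be an $(n+1)$-distributed $m$-navigation algorithm, giving $\dTC_m(X) \le n$.

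The one step requiring care is verifying continuity of $\sigma_m$ in the Lévy--Prokhorov metric; this is the main (and only real) obstacle. Given a convergent sequence $\ov{x}_k \to \ov{x}$, for each $i$ with $\ov{x} \in U_i$ we have $\ov{x}_k \in U_i$ eventually, so $s_i(\ov{x}_k) \to s_i(\ov{x})$ in $P(X)$ and $\rho_i(\ov{x}_k) \to \rho_i(\ov{x})$; for each $i$ with $\rho_i(\ov{x}) = 0$, the mass $\rho_i(\ov{x}_k)$ tends to $0$ even if the underlying path $s_i(\ov{x}_k)$ is only defined (and possibly erratic) eventually. Since the Lévy--Prokhorov distance is controlled by the total mass that fails to transport via nearby points, the terms whose weights vanish in the limit contribute arbitrarily little, and the terms whose weights stay positive converge in $P(X)$ with converging weights. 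Formalizing this with an $\varepsilon$/$\delta$ argument on the Lévy--Prokhorov metric (as done for $m=2$ in \cite[Section 3]{DJ}) gives continuity. The rest of the argument is then immediate.
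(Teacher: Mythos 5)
Your proof is correct and is essentially the standard partition-of-unity argument, which is exactly what the paper's cited reference~\cite[Proposition~3.10]{DJ} does for $m=2$ and what the paper has in mind by calling the general case ``straightforward.'' One small simplification in your continuity check: since $\supp(\rho_i)$ is a \emph{closed} subset of $U_i$, for any $\ov{x}$ either $\ov{x}\in U_i$ (so both $\rho_i$ and $s_i$ are continuous on a neighborhood of $\ov{x}$) or $\ov{x}\notin\supp(\rho_i)$ (so $\rho_i\equiv 0$ on a whole neighborhood of $\ov{x}$); thus the ``erratic path'' scenario you guard against never actually occurs, and locally $\sigma_m$ is a finite sum of continuous weights times continuous sections, whence continuity into $\mathcal{B}_{n+1}(P(X))$ follows directly from the continuity of the natural map $\Delta^{n}\times P(X)^{n+1}\to\mathcal{B}_{n+1}(P(X))$ in the L\'evy--Prokhorov metric.
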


\begin{prop}\label{ineq2}
For each $m \ge 2$, $\dcat(X^{m-1}) \le \dTC_m(X)$.
\end{prop}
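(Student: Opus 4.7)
The plan is to convert an $(n+1)$-distributed $m$-navigation algorithm on $X$ into an $(n+1)$-distributed contraction of $Y := X^{m-1}$ to a chosen basepoint. Set $n := \dTC_m(X)$, fix $x_0 \in X$, take $y_0 := (x_0, \ldots, x_0) \in Y$, and let $s_m : X^m \to \mathcal{B}_{n+1}(P(X))$ be the given algorithm. Given $y = (y_1, \ldots, y_{m-1}) \in Y$, I form the tuple $\ov{x} := (y_1, \ldots, y_{m-1}, x_0) \in X^m$; then every path $\phi \in \supp(s_m(\ov{x}))$ satisfies $\phi(t_i) = y_i$ for $1 \le i \le m-1$ and $\phi(1) = x_0$.

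Next, I define a reparametrization map $\Psi : P(X) \to P(Y)$ by declaring the $i$-th coordinate of $\Psi(\phi)(s)$ to be
$$
\phi\!\left(\tfrac{i-1+s(m-i)}{m-1}\right), \qquad 1 \le i \le m-1.
$$
In the $i$-th coordinate this is a linear reparametrization of $\phi$ restricted to $[t_i,1]$, so when $\phi$ satisfies the endpoint conditions above one has $\Psi(\phi)(0) = y$ and $\Psi(\phi)(1) = y_0$; in particular $\Psi(\phi) \in P(y, y_0)$. Composing the continuous inclusion $\iota : Y \to X^m$, $y \mapsto (y_1, \ldots, y_{m-1}, x_0)$, with $s_m$ and with the pushforward $\Psi_* : \mathcal{B}_{n+1}(P(X)) \to \mathcal{B}_{n+1}(P(Y))$ arising from the functoriality of $\mathcal{B}_{n+1}$, I obtain a continuous map $H : Y \to \mathcal{B}_{n+1}(P(Y))$ with $H(y) \in \mathcal{B}_{n+1}(P(y, y_0))$ for every $y$. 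This is precisely an $(n+1)$-distributed contraction of $Y$ to $y_0$, so $\dcat(X^{m-1}) \le n = \dTC_m(X)$.

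The only point requiring genuine care, and the one I expect to be the main technical step, is the continuity of $\Psi : P(X) \to P(Y)$. It follows from the continuity of the evaluation map $P(X) \times [0,1] \to X$ together with the continuity in $s$ of the linear reparametrizations $s \mapsto (i-1+s(m-i))/(m-1)$, via the exponential law for the compact-open topology; alternatively, one can verify it directly on subbasis neighborhoods, in the spirit of \lemref{iscont}. Once $\Psi$ is continuous, the functoriality of $\mathcal{B}_{n+1}$ already invoked in the proof of \propref{homoinv} gives the continuity of $\Psi_*$, and the rest of the argument is the routine assembly of continuous maps outlined above. This mirrors, for sequential $m$, the inequality $\dcat(X) \le \dTC(X)$ established in~\cite{DJ} for $m=2$.
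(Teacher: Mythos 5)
Your proof is correct and takes essentially the same approach as the paper: you use the same inclusion $y \mapsto (y_1,\ldots,y_{m-1},x_0)$, the same reparametrization $\phi_i(s) = \phi\bigl(\tfrac{i-1+s(m-i)}{m-1}\bigr)$, and the same composition with the pushforward under functoriality of $\mathcal{B}_{n+1}$. The only stylistic difference is that you explicitly isolate the map $\Psi : P(X) \to P(Y)$ and call out its continuity, which the paper asserts without comment; this is a minor clarification, not a different argument.
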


\begin{proof}
Let $\dTC_m(X) = n$. Then there exists an $(n+1)$-distributed $m$-navigation algorithm on $X$, say
$$
s_m: X^m \to \mathcal{B}_{n+1}(P(X)).
$$
For a fixed basepoint $x_0 \in X$, let $J_m : X^{m-1} \hookrightarrow X^m$ be the map 
$$
J(x_1, \ldots, x_{m-1}) = (x_1, \ldots, x_{m-1}, x_0).
$$
Let $\wt{x} = (x_1, \ldots, x_{m-1})$ and $s_m J(\wt{x}) = \sum \lambda_{\phi} \phi$. For each such $\phi\in\supp(s_m J(\wt{x}))$, we can write $\phi' = (\phi_1, \ldots, \phi_{m-1})$, where 
$$
\phi_i(s) = \phi \left(\frac{s(m-i) + i-1}{m-1} \right)
$$
for each $1 \le i \le m-1$. Then $\phi'(0) = \wt{x}$ and $\phi'(1) =\wt{x_0} = (x_0, \ldots, x_0) \in X^{m-1}$. So, the map $\sigma_m : X^{m-1} \to \mathcal{B}_{n+1}(P_0 (X^{m-1}))$ defined by
$$
\sigma_m (\wt{x}) = \sum_{\phi \hspace{1mm} \in \hspace{1mm} \supp(s_m J(\wt{x}))} \hspace{1mm} \lambda_{\phi} \hspace{0.5mm} \phi'
$$
is an $(n+1)$-contraction of $X^{m-1}$ to $\wt{x_0}$. Hence, $\dcat(X^{m-1}) \le n= \dTC_m(X)$.
\end{proof}

\begin{prop}\label{ineq3}
For each $m \ge 2$, $\dTC_m(X) \le \dcat(X^{m})$.
\end{prop}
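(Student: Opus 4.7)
The plan is to mirror the strategy of \propref{ineq2} in the opposite direction: use an $(n+1)$-contraction of $X^m$ to manufacture an $(n+1)$-distributed $m$-navigation algorithm on $X$. Assume $\dcat(X^m) = n$, and fix a continuous $(n+1)$-contraction $H : X^m \to \mathcal{B}_{n+1}(P(X^m))$ to a diagonal basepoint $\wt{x_0} = (x_0, \ldots, x_0)$. For any $\ov{x} = (x_1, \ldots, x_m) \in X^m$ and any $\phi$ in $\supp(H(\ov{x}))$, the path $\phi \in P(X^m)$ decomposes coordinatewise as $\phi = (\phi^1, \ldots, \phi^m)$, giving $m$ paths $\phi^i \in P(X)$ with $\phi^i(0) = x_i$ and $\phi^i(1) = x_0$.

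Next I would build, from each such $\phi$, a single path $\wh{\phi} \in P(X)$ satisfying $\wh{\phi}(t_j) = x_j$ for all $1 \le j \le m$. The natural choice is to let $\psi_i = \phi^i \cdot \ov{\phi^{i+1}}$ for $1 \le i \le m-1$, a path from $x_i$ to $x_{i+1}$ routed through $x_0$, and then assemble these via the map $\theta_{m-1}$ of \lemref{iscont} with $a_i = (m-1)/i$ for $1 \le i \le m-2$. The point of this choice of $a_i$ is that the transition times $1/a_i = i/(m-1)$ of $\theta_{m-1}$ coincide exactly with the target time stamps $t_{i+1}$, so setting
$$
\wh{\phi} = \psi_1 \star \psi_2 \star \cdots \star \psi_{m-1}
$$
gives a path in $X$ satisfying $\wh{\phi}(t_j) = x_j$ for every $j$. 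Defining
$$
\sigma_m(\ov{x}) = \sum_{\phi \hspace{1mm} \in \hspace{1mm} \supp(H(\ov{x}))} \lambda_\phi \hspace{0.5mm} \wh{\phi}
$$
with the weights inherited from $H(\ov{x})$ yields the candidate $(n+1)$-distributed $m$-navigation algorithm on $X$. For $m = 2$ this specializes to the construction behind $\dTC(X) \le \dcat(X^2)$ from \cite{DJ}.

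The main obstacle is verifying continuity of $\sigma_m$ as a map $X^m \to \mathcal{B}_{n+1}(P(X))$ in the Lévy--Prokhorov metric. By functoriality of $\mathcal{B}_{n+1}$, it suffices to show that the assignment $\phi \mapsto \wh{\phi}$ is a continuous map $P(X^m) \to P(X)$, so that the push-forward of $H(\ov{x})$ under it is jointly continuous in $\ov{x}$. This decomposes as a composition of familiar continuous operations: the coordinate projections $P(X^m) \to P(X)$, path reversal, ordinary concatenation of two paths meeting at a fixed midpoint, and finally the $(m-1)$-fold concatenation supplied by \lemref{iscont}. Once continuity is established, $\sigma_m$ is an $(n+1)$-distributed $m$-navigation algorithm on $X$, which yields $\dTC_m(X) \le n = \dcat(X^m)$.
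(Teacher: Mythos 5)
Your argument is essentially the paper's: project each path $\phi$ in the support of $H(\ov{x})$ to its coordinate paths, concatenate consecutive coordinates through the basepoint to build a path from $x_i$ to $x_{i+1}$, and assemble these with $\theta_{m-1}$ using $a_i=(m-1)/i$ so that the transition times land exactly on the $t_j$'s. The one place you deviate is that you \emph{assume} the contraction $H$ goes to a diagonal basepoint $(x_0,\ldots,x_0)$, which lets you take $\psi_i=\phi^i\cdot\ov{\phi^{i+1}}$ directly; the paper instead allows an arbitrary basepoint $(x_1^0,\ldots,x_m^0)$ and bridges $\alpha_i(1)=x_i^0$ to $\alpha_{i+1}(1)=x_{i+1}^0$ by inserting fixed auxiliary paths $\gamma_i$, giving $\alpha_i\cdot\gamma_i\cdot\ov{\alpha}_{i+1}$. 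Your normalization is legitimate but unjustified as written: the definition of $\dcat$ only guarantees a contraction to \emph{some} basepoint, and you should note that, by path-connectedness of $X^m$, post-composing every path in the contraction with a single fixed path to the diagonal produces a contraction to $(x_0,\ldots,x_0)$ with the same number of terms. Once that remark is added, the two proofs coincide.
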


\begin{proof}
Let $\dcat(X^{m}) = n$ and let us have an $(n+1)$-contraction
$$
H: X^m \to \mathcal{B}_{n+1}(P_0(X^m))
$$
of $X^m$ to its fixed basepoint $(x_1^0, \ldots, x_m^0)$. For some $\ov{x} = (x_1, \ldots, x_m) \in X^{m}$, let $H(\ov{x}) = \sum \lambda_{\phi} \phi$. Since $\phi \in \supp(H(\ov{x})) \subset P_0 (X^m)$, we can consider
$$
\alpha_i = \text{proj}_i \hspace{0.5mm} \phi \in P(X),
$$
so that $\alpha_i(0) = x_i$ and $\alpha_i(1) = x_i^0$ for all $1 \le i \le m$. Here, $\text{proj}_i:X^m\to X$ is the projection map onto the $i$-th factor. Let us fix a path $\gamma_i \in P(X)$ from $x_i^0$ to $x_{i+1}^0$ for each $1 \le i \le m-1$. Define
$$
\wh{\phi} = \left(\alpha_1 \cdot \gamma_1 \cdot \ov{\alpha}_2 \right) \star \left(\alpha_2 \cdot \gamma_2 \cdot \ov{\alpha}_3 \right) \star \cdots \star \left(\alpha_{m-1} \cdot \gamma_{m-1} \cdot \ov{\alpha}_m \right),
$$
where $\star$ is the concatenation via the map $\theta_{m-1}$ with $a_i = 1/t_{i+1} = (m-1)/i$ for all $1 \le i \le m-2$. Then the map $\sigma_m : X^m \to \mathcal{B}_{n+1}(P(X))$ defined by
$$
\sigma_m(\ov{x}) = \sum_{\phi \hspace{1mm} \in \hspace{1mm} \supp(H(\ov{x}))} \hspace{1mm} \lambda_{\phi}\hspace{0.5mm}\wh{\phi}
$$
is an $(n+1)$-distributed $m$-navigation algorithm on $X$. Therefore, we obtain $\dTC_m(X) \leq n = \dcat(X^m)$.
\end{proof}

The following statement justifies that $\{\dTC_m(X)\}_{m\ge 2}$ is a non-decreasing sequence for any fixed space $X$.

\begin{prop}\label{nondec}
For each $m \ge 2$, $\dTC_m(X) \le \dTC_{m+1}(X)$.
\end{prop}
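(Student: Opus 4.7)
The plan is to derive an $(n+1)$-distributed $m$-navigation algorithm on $X$ from any given $(n+1)$-distributed $(m+1)$-navigation algorithm $s_{m+1}: X^{m+1} \to \mathcal{B}_{n+1}(P(X))$, where $n = \dTC_{m+1}(X)$. The idea is to pad the input $m$-tuple to an $(m+1)$-tuple by duplicating one coordinate, and then trim the output paths via an affine reparameterization so that the sampling times line up correctly. This is in the same spirit as Propositions~\ref{ineq2} and~\ref{ineq3}, where one precomposes the given algorithm with an inclusion into a larger product and then rearranges the output paths.

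Concretely, I would first define the continuous map $\iota: X^m \to X^{m+1}$ by $\iota(x_1, \ldots, x_m) = (x_1, x_1, x_2, \ldots, x_m)$. For $\ov{x} = (x_1, \ldots, x_m) \in X^m$ and any $\phi \in \supp(s_{m+1}(\iota(\ov{x})))$, we then have $\phi(0) = \phi(1/m) = x_1$ and $\phi(i/m) = x_i$ for $2 \le i \le m$. The key observation is that the affine bijection $[0,1] \to [1/m,\, 1]$ given by $s \mapsto (s(m-1)+1)/m$ sends each sampling time $t_j = (j-1)/(m-1)$ of the $m$-algorithm to $j/m$, which is one of the sampling times for the $(m+1)$-algorithm. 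I would therefore define the continuous map $r: P(X) \to P(X)$ by $r(\phi)(s) = \phi((s(m-1)+1)/m)$, so that $r(\phi)(t_j) = \phi(j/m) = x_j$ for all $1 \le j \le m$.

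By the functoriality of $\mathcal{B}_{n+1}$ on continuous maps between metric spaces (as used for $f_*$ in the proof of Proposition~\ref{homoinv}), the map $r$ induces a continuous $r_*: \mathcal{B}_{n+1}(P(X)) \to \mathcal{B}_{n+1}(P(X))$. The composite $\sigma_m := r_* \circ s_{m+1} \circ \iota$ is then continuous, and for each $\ov{x} \in X^m$ it sends $\ov{x}$ into $\mathcal{B}_{n+1}(P(\ov{x}))$ by the preceding computation. This realizes $\sigma_m$ as an $(n+1)$-distributed $m$-navigation algorithm on $X$, yielding $\dTC_m(X) \le n = \dTC_{m+1}(X)$. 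No step here looks delicate: the entire argument reduces to choosing which coordinate to duplicate (I chose $x_1$; duplicating $x_m$ would work symmetrically) and then verifying that the resulting affine reparameterization lines up the $m$ sampling times $\{(j-1)/(m-1)\}_{j=1}^m$ with $m$ of the $m{+}1$ sampling times $\{i/m\}_{i=0}^m$ of the higher algorithm, so the \textbf{main (and only) obstacle} is essentially bookkeeping of the timing parameters.
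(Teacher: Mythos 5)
Your proof is correct and gives a \emph{direct} argument, whereas the paper's proof is an immediate two-line consequence of Propositions~\ref{ineq2} and~\ref{ineq3}, namely $\dTC_m(X) \le \dcat(X^m) \le \dTC_{m+1}(X)$. The paper explicitly notes afterward that ``Proposition~\ref{nondec} can also be proven directly,'' and your construction is exactly such a direct proof. Concretely, you pad the $m$-tuple to an $(m+1)$-tuple by duplicating a coordinate and then restrict/reparametrize each output path so that the $m$ sampling times $\{(j-1)/(m-1)\}_{j=1}^m$ of the $m$-algorithm land on the subset $\{j/m\}_{j=1}^m$ of the $(m+1)$ sampling times. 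Your affine map $s\mapsto (s(m-1)+1)/m$ does send $(j-1)/(m-1)$ to $j/m$, so $r(\phi)(t_j)=\phi(j/m)=x_j$ as required; combined with the functoriality of $\mathcal{B}_{n+1}$ (which the paper already invokes in Proposition~\ref{homoinv}), the composite $r_* \circ s_{m+1} \circ \iota$ is a valid $(n+1)$-distributed $m$-navigation algorithm. One small remark: duplicating $x_1$ (or $x_m$) is not essential --- since you discard the sampling time $0$ after reparametrizing, the padded first coordinate could even be a fixed basepoint; what matters is only that the surviving $m$ sampling times of $\phi$ hit $x_1,\ldots,x_m$ in order. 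Also note that $r$ may collapse two distinct paths in the support, but this only decreases the support size, which is harmless. What the paper's route buys is brevity and the reuse of already-established structure (passing through $\dcat(X^m)$); what your route buys is a self-contained proof that does not depend on Propositions~\ref{ineq2} and~\ref{ineq3}, and illustrates the padding-and-reparametrizing technique cleanly.
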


\begin{proof}
The inequality $\dTC_m(X) \le \dcat(X^{m}) \le \dTC_{m+1}(X)$ follows directly from Propositions~\ref{ineq2} and~\ref{ineq3}.
\end{proof}

\begin{prop}\label{ineq4}
For any $m \ge 2$, $\max\{\dTC_m(X), \dTC_m(Y)\} \le \dTC_m(X \times Y)$ for any given spaces $X$ and $Y$.
\end{prop}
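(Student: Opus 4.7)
The plan is to prove the inequality $\dTC_m(X) \le \dTC_m(X \times Y)$; the symmetric inequality $\dTC_m(Y) \le \dTC_m(X \times Y)$ then follows by the same argument with the roles of $X$ and $Y$ exchanged. The idea is to restrict any given navigation algorithm on $X \times Y$ to a ``slice'' sitting above a fixed basepoint of $Y$, and then push the resulting distributed paths forward under the coordinate projection $X \times Y \to X$.

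More precisely, let $n = \dTC_m(X \times Y)$ and fix an $(n+1)$-distributed $m$-navigation algorithm
$$
s_m : (X \times Y)^m \to \mathcal{B}_{n+1}(P(X \times Y)).
$$
Choose a basepoint $y_0 \in Y$ and define a continuous inclusion $\iota : X^m \to (X \times Y)^m$ by $\iota(x_1, \ldots, x_m) = ((x_1, y_0), \ldots, (x_m, y_0))$. Let $p : X \times Y \to X$ be the projection; it induces a continuous map on path spaces and, by the functoriality of $\mathcal{B}_{n+1}$ used already in the proof of Proposition~\ref{homoinv}, a continuous pushforward
$$
p_* : \mathcal{B}_{n+1}(P(X \times Y)) \to \mathcal{B}_{n+1}(P(X)).
$$
I then set $\sigma_m = p_* \circ s_m \circ \iota : X^m \to \mathcal{B}_{n+1}(P(X))$.

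It remains to check that $\sigma_m$ is an $(n+1)$-distributed $m$-navigation algorithm on $X$. Continuity is immediate from the construction. For the constraint, take any $\ov{x} = (x_1, \ldots, x_m) \in X^m$ and write $s_m(\iota(\ov{x})) = \sum \lambda_{\phi} \phi$; each $\phi \in \supp(s_m(\iota(\ov{x})))$ satisfies $\phi(t_i) = (x_i, y_0)$, so $(p \circ \phi)(t_i) = x_i$ for all $1 \le i \le m$. Thus every path in the support of $\sigma_m(\ov{x})$ lies in $P(\ov{x})$. The support size is preserved or decreased under $p_*$ (two distinct paths in $X \times Y$ whose $X$-projections coincide will have their weights combined), so indeed $\sigma_m(\ov{x}) \in \mathcal{B}_{n+1}(P(\ov{x}))$. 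Hence $\dTC_m(X) \le n = \dTC_m(X \times Y)$.

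There is no real obstacle here; the only point that deserves care is the continuity of the pushforward $p_*$ on the spaces of Lévy--Prokhorov probability measures, but this is exactly the functoriality property of $\mathcal{B}_{n+1}$ already invoked in the proof of the homotopy invariance of $\dTC_m$, so it may be cited rather than reproved.
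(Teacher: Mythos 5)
Your proof is correct and follows essentially the same route as the paper's: you include $X^m$ into $(X\times Y)^m$ via a fixed basepoint of $Y$ (the paper's map $J^m$), apply the given navigation algorithm, and push forward along the coordinate projection using the functoriality of $\mathcal{B}_{n+1}$ (the paper's $\gamma^*$). The remark about supports possibly collapsing under the pushforward is a nice explicit observation but does not change the argument.
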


\begin{proof}
Let us fix some $b \in Y$ and define $J^m: X^m \to (X \times Y)^m$ by 
$$
J^m\left(x_1, x_2, \ldots, x_m\right) = \left(\left(x_1,b\right), \left(x_2,b\right), \ldots, \left(x_m,b\right)\right).
$$
Let us denote $\ov{x} = (x_1, \ldots, x_m)$. Let $\dTC_m(X \times Y) = n$ with an $(n+1)$-distributed $m$-navigation algorithm
$$
s_m: (X \times Y)^m \to \mathcal{B}_{n+1}(P(X \times Y))
$$
on $X \times Y$. Let $s_m J^m(\ov{x}) = \sum \lambda_{\phi} \phi$. If $\gamma : P(X \times Y) \to P(X)$ is defined as
$$
\gamma (f)= \text{proj}_1 \hspace{0.5mm} f,
$$
where $\text{proj}_1:X\times Y\to X$ denotes the projection onto $X$, then $\gamma$ induces a map $\gamma^*: \mathcal{B}_{n+1}(P(X \times Y)) \to \mathcal{B}_{n+1}(P(X))$ by the functoriality of $\mathcal{B}_{n+1}$. Hence, the map $\sigma_m = \gamma^* s_m J^m: X^m \to \mathcal{B}_{n+1}(P(X))$ defined as
$$
\sigma_m (\ov{x}) = \sum_{\phi  \hspace{1mm} \in \hspace{1mm} \supp(s_m J^m(\ov{x}))} \hspace{1mm} \lambda_{\phi}  \hspace{1mm} \text{proj}_1 \hspace{0.5mm} \phi
$$
is an $(n+1)$-distributed $m$-navigation algorithm on $X$. This gives us the inequality $\dTC_m(X) \le n =  \dTC_m(X \times Y)$. By similar arguments, the other inequality $\dTC_m(Y) \le n =  \dTC_m(X \times Y)$ follows.
\end{proof}

\begin{prop}\label{badupperbound}
For any $m \ge 2$ and space $X$, $\dTC_m(X) \le (\dTC(X)+1)^{m-1} - 1$.
\end{prop}

\begin{proof}
Let $\dTC(X)=\dTC_2(X)=n$. Consider an $(n+1)$-distributed $2$-navigation algorithm on $X$, say
$$
H: X \times X \to \mathcal{B}_{n+1}(P(X)).
$$
Let us fix some $m \ge 3$. Given $\ov{x} = (x_1,\ldots, x_m) \in X^m$, for each $1 \le i \le m-1$, we let
$$
H\left(x_i,x_{i+1}\right) = \sum_{j=1}^{n+1} b_i^j \hspace{0.5mm} \phi_i^j ,
$$
where $\sum_{j=1}^{n+1} b_i^j = 1$ and $\phi_i^j(0) = x_i$ and $\phi_i^j(1) = x_{i+1}$ for each $j$. Consider
$$
\phi_1^{k_1} \star \phi_2^{k_2} \star \cdots \star \phi_{m-1}^{k_{m-1}} \hspace{0.5mm}\in P(X),
$$
where $k_i \in L= \{1,\ldots,n+1\}$ for each $1 \le i \le m-1$ and $\star$ denotes concatenation done via the map $\theta_{m-1}$ from Lemma~\ref{iscont} with $a_j = 1/t_{j+1} = (m-1)/j$ for all $1 \le j \le m-2$. Then the map $s_m : X^m \to \mathcal{B}_{(n+1)^{m-1}}(P(X))$ defined by
$$
s_m(\ov{x}) = \sum_{k_1 \in L}\hspace{1mm}  \sum_{k_2 \in L}  \cdots \sum_{k_{m-1} \in L}  \hspace{1mm} b_1^{k_1} b_2^{k_2} \cdots b_{m-1}^{k_{m-1}} \hspace{0.5mm} \left(\phi_1^{k_1} \star \phi_2^{k_2} \star \cdots \star \phi_{m-1}^{k_{m-1}}\right)
$$ 
is an $(n+1)^{m-1}$-distributed $m$-navigation algorithm on $X$. Hence, we obtain the inequality $\dTC_m(X) \le (n+1)^{m-1}-1=(\dTC(X)+1)^{m-1}-1$.
\end{proof}

\begin{cor}\label{tcmrpn}
For any $m \ge 2$ and $n \ge 1$, $\dTC_m(\R P^n) \le 2^{m-1} - 1$.
\end{cor}

\begin{proof}
This follows directly from Proposition~\ref{badupperbound} and the equality $\dTC(\R P^n)=1$, whose proof can be found in~\cite[Example 3.13]{DJ}.
\end{proof}

For $m = 2$ and $X = \R P^n$, this upper bound is sharp in view of~\cite[Example 3.13]{DJ} for each $n \ge 1$. Using Proposition~\ref{ineq1}, we obtain
$$
\dTC_m (\R P^n) \le \TC_m (\R P^n) \le \cat((\R P^n)^m) = mn.
$$
For a fixed $m \ge 2$, this upper bound is linear in $n$. So our bound from Corollary~\ref{tcmrpn}, which is independent of $n$, is significantly better when $n$ is large. However, using the \textit{analog invariants}, we will improve this upper bound in Section~\ref{seclast}.

\begin{remark}
Let us fix some $m \ge 3$. Due to Corollary~\ref{tcmrpn}, we obtain for all $n \ge (2^{m-1}-1)/(m-2)$ that
$$
\TC_{m}(\R P^n) - \dTC_m(\R P^n) \ge \cat((\R P^n)^{m-1}) - \dTC_m(\R P^n)
$$
$$
\ge n(m-1) - (2^{m-1} - 1) \ge n.
$$
For $m = 2$ and any $n \ge 1$, we get 
$$
\TC(\R P^{n+1}) - \dTC(\R P^{n+1}) \ge \cat(\R P^{n+1}) - \dTC(\R P^{n+1}) \ge n.
$$
Hence, the gap between $\TC_m(X)$ and $\dTC_m(X)$ can be arbitrarily large for each $m \ge 2$.  So, in particular, $\dTC_m$ is a different notion than $\TC_m$ for each $m \ge 2$ and the inequality in Proposition~\ref{ineq1} can be strict for all $m \ge 2$. 
\end{remark}

Since $\dTC_m$ is a homotopy invariant, for a discrete group $\Gamma$, we can define $\dTC_m(\Gamma):= \dTC_m(B\Gamma)$, where $B\Gamma = K(\Gamma, 1)$ is a classifying space of the universal cover of spaces having $\Gamma$ as their fundamental group. In Example~\ref{lateref}, we will see that $\dTC_m(\R P^{\infty}) = \dTC_m(\Z_{2}) =1$. 
%This upper bound will also be improved using the analog invariants in Section~\ref{seclast}.

\begin{remark}
We note that an analog of~\cite[Theorem 2.1]{FO} does not hold in the case of $\dTC_m$ for any $m \ge 2$, at least for groups with torsion. This is because for $\Gamma = \Z_{2}$, since the subgroup $K \subset \Z_2^m$ (as defined in~\cite[Theorem 2.1]{FO}) is finite, $\cd(K)$ is infinite, where $\cd$ denotes the cohomological dimension~\cite{Br}. But on the other hand, $\dTC_m(\Z_2)=1$ in view of Example~\ref{lateref}.
\end{remark}

\section{Cohomological Lower Bounds}\label{Cohomological Lower Bounds}
\subsection{Symmetric products}\label{4.1}
For a space $X$ and $k \ge 1$, its $k$-th symmetric product $SP^{k}(X)$ is defined as the orbit space of the action of the symmetric group $S_{k}$ on the product space $X^k$ by permutation of coordinates. In this section, we regard each $[x_1, \ldots, x_k] \in SP^k(X)$ as a formal sum $\sum n_i x_i$, where $n_i \ge 1$ and $\sum n_i = k$, subject to the equivalence $n_1 x + n_2 x = (n_1 + n_2)x$. So, $n_i$ denotes the number of times $x_i$ appears in the unordered $k$-tuple $[x_1, \ldots, x_k]$. Define $\delta_k : X \to SP^k(X)$ as the diagonal inclusion $\delta_k (x) = [x, x, \ldots, x] = kx$. 

The following result in singular cohomology will be very useful in Section~\ref{estcomp}.

\begin{prop}[\protect{\cite[Proposition 4.3]{DJ}}]\label{useful}
For a finite simplicial complex $X$ and any $k \ge 1$, the induced homomorphism $\delta_k^* : H^*(SP^k(X);\Q) \to H^*(X;\Q)$ is surjective.
\end{prop}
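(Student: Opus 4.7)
The plan is to exploit the factorization $\delta_k = q \circ \Delta$, where $q: X^k \to SP^k(X)$ is the orbit map of the $S_k$-action by permutation of coordinates and $\Delta: X \to X^k$ is the full diagonal. Over $\Q$, the classical transfer argument for finite group actions gives that
\[
q^*: H^*(SP^k(X);\Q) \To H^*(X^k;\Q)^{S_k}
\]
is an isomorphism onto the invariants. Here $|S_k|$ being invertible in $\Q$ and the fact that $X$ (hence $X^k$) is a finite simplicial complex with an $S_k$-action admitting an $S_k$-CW structure are what make this averaging work, even though the action is far from free on the partial diagonals.

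Next, by the rational Künneth theorem, $H^*(X^k;\Q) \cong H^*(X;\Q)^{\otimes k}$ as graded $S_k$-modules. Given any $\alpha \in H^*(X;\Q)$, I would consider the symmetrized element
\[
\gamma_\alpha \;=\; \sum_{i=1}^{k} 1 \otimes \cdots \otimes 1 \otimes \alpha \otimes 1 \otimes \cdots \otimes 1,
\]
with $\alpha$ placed in the $i$th tensor slot. Because the identity class $1$ lies in degree $0$, no Koszul signs appear when permuting tensor factors, so $\gamma_\alpha$ is manifestly $S_k$-invariant. By the previous step, there exists a unique $\beta \in H^*(SP^k(X);\Q)$ with $q^*(\beta) = \gamma_\alpha$.

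Finally, the full diagonal induces the iterated cup product on the Künneth factorization, so $\Delta^*(\gamma_\alpha) = k \cdot \alpha$ (a sum of $k$ copies of $\alpha \smile 1 \smile \cdots \smile 1 = \alpha$). Hence
\[
\delta_k^*(\beta) \;=\; \Delta^* q^*(\beta) \;=\; \Delta^*(\gamma_\alpha) \;=\; k\alpha,
\]
and dividing by $k \in \Q^\times$ exhibits $\alpha$ in the image of $\delta_k^*$, proving surjectivity.

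The main obstacle is the first step: verifying that $q^*$ identifies $H^*(SP^k(X);\Q)$ with the $S_k$-invariants despite the non-freeness of the $S_k$-action. The cleanest justification is the Borel construction: the rational Serre spectral sequence for the fibration $X^k \to ES_k \times_{S_k} X^k \to BS_k$ collapses (since $H^{>0}(BS_k;\Q) = 0$), yielding $H^*_{S_k}(X^k;\Q) \cong H^*(X^k;\Q)^{S_k}$, while the map $ES_k \times_{S_k} X^k \to SP^k(X)$ induces an isomorphism on $\Q$-cohomology because all isotropy groups are finite. The finite simplicial hypothesis ensures that these standard comparison theorems apply without technical difficulty.
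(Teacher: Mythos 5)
Your proof is correct and takes essentially the same route as the cited source: factor $\delta_k = q \circ \Delta$, use the rational transfer to identify $H^*(SP^k(X);\Q)$ with $H^*(X^k;\Q)^{S_k}$, symmetrize $\alpha \otimes 1 \otimes \cdots \otimes 1$ (no Koszul signs, since $1$ is in degree $0$), and restrict along the diagonal to get $k\alpha$. Dividing by $k$ over $\Q$ gives surjectivity, exactly as in the standard argument.
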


In this section, we regard $X$ as the subspace of $SP^k(X)$ under the diagonal inclusion $\delta_k$ and use the term {\em inclusion} to refer to the map $\delta_k$.

\subsection{Lower bound for $\dTC_m$}\label{subsect4.2}

\begin{lemma}\label{first}
For any $k \ge 1$, $SP^k(P(X))$ deforms to $SP^k(X)$.
\end{lemma}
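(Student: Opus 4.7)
The plan is to build the deformation from the familiar strong deformation retraction of $P(X)$ onto $X$ (via constant paths) by applying the functor $SP^k$. Concretely, I would first define
$$
H: P(X) \times I \to P(X), \qquad H(\phi,t)(s) = \phi((1-t)s),
$$
and observe that $H_0 = \id_{P(X)}$, that $H_1(\phi)$ is the constant path at $\phi(0)$, and that for any constant path $c_x \in P(X)$ one has $H_t(c_x) = c_x$ for all $t \in I$. Since $P(X)$ carries the compact-open topology, a standard exponential-law check shows $H$ is continuous. Thus $H$ exhibits $X$ (identified with the subspace of constant paths in $P(X)$) as a strong deformation retract of $P(X)$.

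Next I would promote this to symmetric products. Applying $H$ coordinatewise gives a continuous map
$$
\widetilde H : (P(X))^k \times I \to (P(X))^k, \qquad \widetilde H((\phi_1,\dots,\phi_k),t) = (H_t(\phi_1),\dots,H_t(\phi_k)),
$$
which is equivariant with respect to the $S_k$-action permuting coordinates. Passing to the quotient by $S_k$ (and using that $\widetilde H \times \id_I$ descends to a map on the product with $I$, since $I$ has the trivial action) yields a continuous homotopy
$$
F : SP^k(P(X)) \times I \to SP^k(P(X)), \qquad F_t([\phi_1,\dots,\phi_k]) = [H_t(\phi_1),\dots,H_t(\phi_k)].
$$

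Finally, I would verify the retraction properties directly from the corresponding properties of $H$: $F_0 = \id$; the image of $F_1$ consists of classes $[c_{\phi_1(0)},\dots,c_{\phi_k(0)}]$, which under the natural identification lies in $SP^k(X) \subset SP^k(P(X))$; and for any $[c_{x_1},\dots,c_{x_k}] \in SP^k(X)$ we have $F_t([c_{x_1},\dots,c_{x_k}]) = [c_{x_1},\dots,c_{x_k}]$ for every $t$. Hence $F$ is a strong deformation retraction of $SP^k(P(X))$ onto $SP^k(X)$.

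The only mildly delicate point is ensuring that the descent of $\widetilde H$ to the symmetric product is honestly continuous, i.e., that the quotient map $(P(X))^k \times I \to SP^k(P(X)) \times I$ is itself a quotient map; this is immediate because $I$ is locally compact Hausdorff, so products with $\id_I$ preserve quotient maps. Everything else is formal bookkeeping with the definitions.
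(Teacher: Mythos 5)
Your proof is correct and takes essentially the same route as the paper: both use the strong deformation retraction of $P(X)$ onto the constant paths (precomposition by $s\mapsto(1-t)s$), and both push this through the functor $SP^k$ to get the desired deformation of $SP^k(P(X))$ onto $SP^k(X)$. You spell out the descent through the $S_k$-quotient (and the use of local compactness of $I$) a bit more explicitly than the paper does, but the idea and the homotopy are the same.
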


\begin{proof}
Let $\text{ev}: P(X) \to X$ be defined as the evaluation fibration $\text{ev}: \phi \mapsto \phi(0)$. Let $f = SP^k(\text{ev}) : SP^k(P(X)) \to SP^k(X)$ be induced by $\text{ev}$ due to the functoriality of the $k$-th symmetric product $SP^k$. Define $g: SP^k(X) \to SP^k(P(X))$ as
$$
g[a_1, \ldots, a_k] = \left[c_{a_1}, \ldots, c_{a_k}\right],
$$
where $c_{a_i}$ denotes the constant path at $a_i \in X$. Clearly, $fg = \mathbbm{1}_{SP^k(X)}$. Finally, define a map $h: SP^k(P(X)) \times I \to SP^k(X)$ such that
$$
h\left(\left[\phi_1, \ldots, \phi_k \right],t\right)(s) = \left[ \phi_1(s(1-t)), \ldots, \phi_k(s(1-t)) \right]
$$
for all $s \in [0,1]$. Then $h$ is a homotopy between $\mathbbm{1}_{SP^k(P(X))}$ and $gf$.
\end{proof}

For fixed $m \ge 2$ and space $X$, the fibration $\pi_m: P(X) \to X^m$ defined in Section~\ref{Preliminaries1} induces by functoriality $\zeta_n = SP^{n!}(\pi_m): SP^{n!}(P(X)) \to SP^{n!}(X^m)$ for each $n \ge 1$. Let $\pa_n: X^m \to SP^{n!}(X^m)$ be the diagonal inclusion in the above sense. Consider the following pullback diagram.
\begin{equation}\label{ones}
\begin{tikzcd}[contains/.style = {draw=none,"\in" description,sloped}]
\mathcal{D}_{n,m} \arrow{r}{a} \arrow[swap]{d}{\sigma_n} 
& 
SP^{n!}(P(X)) \arrow[swap]{d}{\zeta_n}
\\
X^m  \arrow{r}{\pa_n}
& 
SP^{n!}(X^m).
\end{tikzcd}
\end{equation}
Here, for $t_j = (j-1)/(m-1)$, we have
$$
\mathcal{D}_{n,m} = \left\{ \left( \left(x_1, \ldots, x_m\right), \left[\phi_1, \ldots, \phi_{n!}\right] \right) \hspace{1mm} \middle| \hspace{1mm} \phi_i(t_j) = x_j \text{ for all } 1 \le j \le m \right\},
$$ 
and $\sigma_n$ is the canonical projection, the pullback of $\zeta_n$ along $\pa_n$. 

We note that in this setting, one can replace the set $X^m$ on the left with any subset $A_i\subset X^m$ and obtain a pullback diagram as above with the corresponding pullback space $\mathcal{D}_{n,m}^i$, the top horizontal map $a_i$, and the canonical projection fibration $\sigma_n^i:\mathcal{D}_{n,m}^i \to A_i$ which is the pullback of $\zeta_n$ along the restriction of the map $\pa_n$ to $A_i$. For simplicitly, we shall write this restriction as $\pa_n:A_i\to SP^{n!}(X^m)$.
%which w $A_i\hookrightarrow X^m\xrightarrow{\pa_n} SP^{n!}(X^m)$. For simplicitly, we will write the above composition as $\pa_n:A_i\to SP^{n!}(X^m)$

\begin{lemma}\label{second}
If $\dTC_m(X) < n$, then there exist sets $A_1, A_2, \ldots, A_n$ that cover $X^m$ and over each of which $\sigma_n$ has a section.
\end{lemma}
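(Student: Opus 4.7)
The plan is to unpack the hypothesis $\dTC_m(X) < n$ into an $n$-distributed $m$-navigation algorithm $s_m : X^m \to \mathcal{B}_n(P(X))$, and then stratify $X^m$ according to the size of the support of $s_m(\ov{x})$. Concretely, I will set
$$
A_k = \{\ov{x} \in X^m \mid |\supp(s_m(\ov{x}))| = k\}
$$
for $k = 1, \ldots, n$; since the support of $s_m(\ov x)$ is always a nonempty set of at most $n$ paths, these $n$ sets cover $X^m$.

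The key combinatorial point is that $n!$ has been chosen precisely so that $n!/k$ is an integer for every $k$ in this range. Over $A_k$, I plan to take the unordered $k$-tuple $\supp(s_m(\ov{x})) \in SP^k(P(X))$ and inflate it to the unordered $n!$-tuple in which each path is repeated $n!/k$ times; this gives a map $\rho_k : A_k \to SP^{n!}(P(X))$ obtained as the support map into $SP^k(P(X))$ followed by the continuous inflation $SP^k(P(X)) \to SP^{n!}(P(X))$, $[\phi_1,\ldots,\phi_k] \mapsto [\phi_1^{n!/k},\ldots,\phi_k^{n!/k}]$. By definition of an $m$-navigation algorithm, every $\phi \in \supp(s_m(\ov{x}))$ satisfies $\phi(t_j) = x_j$ for all $j$, so $\zeta_n(\rho_k(\ov{x})) = \pa_n(\ov{x})$; the universal property of the pullback~\eqref{ones} then gives a well-defined continuous map $\tau_k : A_k \to \mathcal{D}_{n,m}$, $\tau_k(\ov{x}) = (\ov{x}, \rho_k(\ov{x}))$, and $\sigma_n \tau_k = \id_{A_k}$ by construction, so $\tau_k$ is the required section of $\sigma_n$ over $A_k$.

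The step I expect to be the main obstacle is verifying continuity of the support map $\ov{x} \mapsto \supp(s_m(\ov{x}))$ restricted to $A_k$. Globally on $\mathcal{B}_n(P(X))$ the support map is badly discontinuous, because weights can collapse to zero and make support points disappear, and the strata $A_k$ are themselves neither open nor closed in $X^m$. The content of the continuity claim is that if $\ov{y}_j \to \ov{y}$ within $A_k$, then a pigeonhole on the $k$ atoms combined with the L\'evy--Prokhorov characterization of weak convergence forces the $k$ support paths of $s_m(\ov{y}_j)$ to converge, in some order, to the $k$ support paths of $s_m(\ov{y})$, yielding the needed continuous map $A_k \to SP^k(P(X))$. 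Once this is established, the remainder of the argument amounts to assembling continuous pieces.
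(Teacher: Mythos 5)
Your proposal is correct and follows essentially the same route as the paper: the paper also stratifies $X^m$ by $A_i = \{\ov{x} \mid |\supp(s_m(\ov{x}))| = i\}$, defines $H_i(\ov{x}) = \sum_{\phi \in \supp(s_m(\ov{x}))} \frac{n!}{i}\phi$ (which is precisely your inflated support map $\rho_k$), checks $\zeta_n H_i = \pa_n$, and obtains $\tau_i$ from the universal property of the pullback. Your extra paragraph on why the support map is continuous on each stratum $A_k$ (pigeonhole on the $k$ disjoint balls around the atoms plus the L\'evy--Prokhorov estimate) fills in a step the paper leaves implicit, and it is a valid justification.
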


\begin{proof}
Since $\dTC_m(X) < n$, there exists an $n$-distributed $m$-navigation algorithm
$$
s_m: X^m \to \mathcal{B}_{n}(P(X))
$$
on $X^m$. For each $1 \le i \le n$, define 
$$
A_i = \left\{ \ov{x} = (x_1, \ldots, x_m) \in X^m \mid |\supp (s_m(\ov{x}))| = i\right\}.
$$
We define a map $H_i : A_i \to SP^{n!}(P(X))$ as
$$
H_i (\ov{x}) = \sum_{\phi \hspace{1mm} \in \hspace{1mm} \supp (s_m(\ov{x}))} \hspace{1mm} \frac{n!}{i} \phi.
$$
For each $\ov{x} \in X^m$, see that $\zeta_n H_i(\ov{x}) = n! \hspace{0.5mm} \ov{x}\in SP^{n!}(X^m)$. Hence, $\zeta_n H_i = \pa_n$. So, the following diagram commutes.
\begin{equation}\label{twos}
\begin{tikzcd}
A_i \arrow[ddr,bend right,"\mathbbm{1}_{A_i}"'] \arrow[drr,bend left,"H_i"] \arrow[dr,dashed,"\tau_i"] 
\\
& 
\smash[b]{\mathcal{D}_{n,m}^i} \arrow{r}{a_i} \arrow[d,"\sigma_n^i"] 
& 
SP^{n!}(P(X)) \arrow[d,"\zeta_{n}"] 
\\
& 
A_i \arrow[r,"\pa_n"] 
& 
SP^{n!}(X^m).
\end{tikzcd}
\end{equation}
Here, $\tau_i : A_i \to \mathcal{D}_{n,m}$ exists due to the universal property of pullback and we have $\sigma_n^i \tau_i = \mathbbm{1}_{A_i}$. So, we have found a section of $\sigma_n^i$ over each $A_i$.
\end{proof}

For a fixed $m \ge 2$, let $\Delta: X \to X^m$ be the diagonal map and let the map $\Delta_n : SP^{n!}(X) \to SP^{n!}(X^m)$ be induced from $\Delta$ by functoriality of $SP^{n!}$. We use an idea from~\cite[Theorem 7]{Far1} and~\cite[Section 2]{Sh} to prove the following result in Alexander--Spanier cohomology~\cite{Sp}.

\begin{thm}\label{boundtcm}
Suppose that $\alpha_i^* \in H^{k_i}(SP^{n!}(X^m); R)$, $1 \le i \le n$, for some ring $R$ and $k_i \ge 1$, are cohomology classes such that $\Delta_n^*(\alpha_i^*) = 0$. Let $\alpha_i$ be their images under the induced homomorphism $\pa_n^*$, and let us assume that the cup product $\alpha_1 \smile \alpha_2 \smile \cdots \smile \alpha_n$ is non-zero. Then, 
$$
\dTC_m(X) \ge n.
$$
\end{thm}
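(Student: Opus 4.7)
The plan is to adapt the classical zero-divisor cup-length argument (in the spirit of Farber~\cite{Far1} and Schwarz~\cite{Sch}) to the fibration $\sigma_n : \mathcal{D}_{n,m} \to X^m$ from the pullback diagram \eqref{ones}, using \lemref{second} as the distributional analog of a sectional-category upper bound and working in Alexander--Spanier cohomology to accommodate the fact that the resulting cover of $X^m$ is not by open subsets. Arguing by contradiction, I would assume $\dTC_m(X) < n$. Then \lemref{second} produces a cover $X^m = A_1 \cup \cdots \cup A_n$ together with sections $\tau_j : A_j \to \mathcal{D}_{n,m}$ of $\sigma_n$ over each $A_j$. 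The goal is to prove that each class $\alpha_i = \pa_n^*(\alpha_i^*)$ restricts to zero on every $A_j$; once this is in hand, the cup-length argument forces $\alpha_1 \smile \cdots \smile \alpha_n = 0$, contradicting the hypothesis.

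The key intermediate step is to verify $\zeta_n^*(\alpha_i^*) = 0$ in $H^*(SP^{n!}(P(X)); R)$. Let $f : SP^{n!}(P(X)) \to SP^{n!}(X)$ and its homotopy inverse $g$ be as in the proof of \lemref{first}, where $f$ evaluates each path at $0$ and $g$ sends an unordered tuple of points to the corresponding unordered tuple of constant paths. A direct computation gives the on-the-nose identity $\zeta_n \circ g = \Delta_n$, since the fibration $\pi_m$ sends a constant path $c_a$ to $\Delta(a) \in X^m$. Combined with $g \circ f \simeq \mathbbm{1}$ from \lemref{first}, this yields $\zeta_n \simeq \Delta_n \circ f$, so $\zeta_n^* = f^* \circ \Delta_n^*$ and therefore $\zeta_n^*(\alpha_i^*) = f^*(\Delta_n^*(\alpha_i^*)) = 0$ by the zero-divisor hypothesis.

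From $\sigma_n \tau_j = \mathbbm{1}_{A_j}$ together with diagram \eqref{ones} we obtain the identity $\pa_n|_{A_j} = \zeta_n \circ a \circ \tau_j$ of maps $A_j \to SP^{n!}(X^m)$, hence
$$
\alpha_i|_{A_j} \;=\; (\pa_n|_{A_j})^*(\alpha_i^*) \;=\; \tau_j^{*}\, a^{*}\, \zeta_n^{*}(\alpha_i^*) \;=\; 0
$$
for all $i, j$; in particular $\alpha_i|_{A_i} = 0$. In Alexander--Spanier cohomology each $\alpha_i$ then lifts to a relative class $\tilde\alpha_i \in H^{k_i}(X^m, A_i; R)$, and the Alexander--Spanier relative cup product (cf.~\cite[Section 2]{Sh}) sends $\tilde\alpha_1 \smile \cdots \smile \tilde\alpha_n$ into $H^{k_1 + \cdots + k_n}(X^m,\, A_1 \cup \cdots \cup A_n; R) = H^{k_1 + \cdots + k_n}(X^m, X^m; R) = 0$, whose image in absolute cohomology is $\alpha_1 \smile \cdots \smile \alpha_n$, giving the desired contradiction.

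The main obstacle I anticipate is technical rather than conceptual: justifying the relative cup-product step when the cover $\{A_j\}$ consists of arbitrary subsets rather than open ones. This is precisely why one passes from singular to Alexander--Spanier cohomology, and is also the reason the argument differs slightly from the standard zero-divisor proof for $\TC_m$. The geometric identities $\zeta_n \circ g = \Delta_n$ and $\pa_n|_{A_j} = \zeta_n \circ a \circ \tau_j$, and the existence of the relative lifts, should follow formally from \lemref{first}, \lemref{second}, and the pullback structure of \eqref{ones}.
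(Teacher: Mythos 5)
Your proof is correct and follows essentially the same path as the paper's: Lemma~\ref{second} provides the cover with sections, Lemma~\ref{first} gives $\Ker(\Delta_n^*) \subset \Ker(\zeta_n^*)$, and the relative cup-length argument in Alexander--Spanier cohomology finishes. The only cosmetic differences are that you verify the identity $\zeta_n \circ g = \Delta_n$ explicitly (the paper tacitly commutes the same diagram) and that you lift the classes $\alpha_i$ directly over the pairs $(X^m, A_i)$, whereas the paper lifts $\alpha_i^*$ over $(SP^{n!}(X^m), A_i)$ and pushes forward along $\pa_n^*$.
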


\begin{proof}
Let us consider the following commutative diagram, where the continuous map $g:SP^{n!}(X)\to SP^{n!}(P(X))$ is the same as in the proof of Lemma~\ref{first}.
\begin{equation}
\begin{tikzcd}[contains/.style = {draw=none,"\in" description,sloped}]
H^*(SP^{n!}(P(X));R) \arrow{d}{g^*}
& 
\\
H^*(SP^{n!}(X);R)
& 
H^*(SP^{n!}(X^m);R). \arrow{l}{\Delta_n^*} \arrow[swap]{ul}{\zeta_n^*}
\end{tikzcd}
\end{equation}
Due to Lemma~\ref{first}, $g^*$ is an isomorphism. So, in particular, $\Ker(\Delta_n^*)  = \Ker(\zeta_n^*)$. Suppose that $\dTC_m(X) < n$. Then from Lemma~\ref{second}, there exists a cover $\{A_i\}_{i=1}^n$ of $X^m$ such that $\sigma_n^i$ has a section $\tau_i$ over each $A_i$, i.e., $\sigma_n^i \tau_i = \mathbbm{1}_{A_{i}}$. Due to this and an analog of Diagram~\ref{ones} with the roles of $X^m$, $\mathcal{D}_{n,m}$, and $\sigma_n$ replaced, respectively, with $A_i$, $\mathcal{D}_{n,m}^i$, and $\sigma_n^i$, the following diagram commutes for each $1 \le i \le n$.
\begin{equation}\label{threes}
\begin{tikzcd}[every arrow/.append style={shift left}]
H^{k_i}(A_i;R)  \arrow{r}{(\sigma_n^i)^*}
& 
H^{k_i}(\mathcal{D}_{n,m}^i;R) \arrow{l}{{\tau_i^* \vphantom{1}}}
\\
H^{k_i}(SP^{n!}(X^m);R) \arrow{r}{\zeta_n^*} \arrow[swap]{u}{\pa_n^*} 
& 
H^{k_i}(SP^{n!}(P(X));R).  \arrow[swap]{u}{a_i^*}
\end{tikzcd}
\end{equation}
Clearly, $\tau_i^*( \sigma_n^i)^* = \mathbbm{1}_{H^{k_i}(A_i;R)}$. Therefore, $(\sigma_n^i)^*$ is injective. Since $\Delta_n^*(\alpha_i^*) = 0$, we get $\zeta_n^*(\alpha_i^*) = 0$, and thus, 
$$
(\sigma_n^i)^*(\pa_n^*(\alpha_i^*)) = a_i^* (\zeta_n^*(\alpha_i^*)) =0.
$$
Now, because $(\sigma_n^i)^*$ is injective, $\pa_n^*(\alpha_i^*)=0$. From the long exact sequence of the pair $(SP^{n!}(X^m),A_i)$ in Alexander--Spanier cohomology, 
$$
\cdots \to H^{k_i}(SP^{n!}(X^m),A_i; R) \xrightarrow{j_i^*} H^{k_i}(SP^{n!}(X^m);R) \xrightarrow{\pa_n^*} H^{k_i}(A_i;R) \to \cdots,
$$
there exists $\ov{\alpha_i^*} \in H^{k_i}(SP^{n!}(X^m),A_i; R)$ such that $j_i^*(\ov{\alpha_i^*}) = \alpha_i^*$. Further, let $\pa_n^*(\ov{\alpha_i^*}) = \ov{\alpha}_i \in H^{k_i}(X^m,A_i; R)$. For $j$ and $j'$ denoting the sums of maps $j_i$ and $j_i'$, respectively, and $k = \sum k_i$, we get the following commutative diagram.
\begin{equation}
\begin{tikzcd}[contains/.style = {draw=none,"\in" description,sloped}]
H^{k}(X^m;R)  
& 
\arrow[swap]{l}{(j')^*} H^{k}\left(X^m,\bigcup_{i=1}^{n}A_i;R\right)  
\\
H^{k}\left(SP^{n!}(X^m);R\right)  \arrow[swap]{u}{\pa_n^{*}}
& 
H^k \left(SP^{n!}(X^m),\bigcup_{i=1}^n A_{i};R\right). \arrow[swap]{u}{\pa_n^{*}} \arrow[swap]{l}{j^*} 
\end{tikzcd}
\end{equation}
The cup product $\ov{\alpha_1^*} \smile \cdots \smile \ov{\alpha_n^*}$ in the bottom-right goes to the non-zero cup product $\alpha_1 \smile \cdots \smile \alpha_n \neq 0$ in the top-left. But in the process, it factors through $\ov{\alpha}_1 \smile \cdots \smile \ov{\alpha}_n \in H^k (X^m,X^m;R) = 0$. This is a contradiction. Hence, we must have $\dTC_m(X) \ge n$.
\end{proof}

We note that the reason we work in Alexander--Spanier cohomology in Theorem~\ref{boundtcm} is that the sets $A_i$ defined in Lemma~\ref{second} need not be open or closed in $SP^{n!}(X^m)$. So, $(SP^{n!}(X^m),A_i)$ may not be a good pair in the sense of~\cite{Ha}.

\begin{remark}
If $X$ is a locally finite CW complex, then Theorem~\ref{boundtcm} for $m=2$ is recovered from~\cite[Theorem 4.12]{DJ} due to the fact that the Alexander--Spanier cohomology groups coincide with the singular cohomology groups for locally finite CW complexes~\cite{Sp}, and the following commutative diagram.
\begin{equation}
\begin{tikzcd}[contains/.style = {draw=none,"\in" description,sloped}]
H^{*}(SP^{n!}(X^2);R)  \arrow{r}{\Delta_n^*} \arrow[swap]{d}{\pa_n^*}
& 
H^{*}\left(SP^{n!}(X);R\right)  \arrow{d}{}  
\\
H^{*}\left(X^2;R\right)  \arrow{r}{\Delta^*} 
& 
H^* \left(X;R\right).
\end{tikzcd}
\end{equation}
Hence, $\Delta_n^*(\alpha_i^*) = 0$ implies that $\Delta^*(\alpha_i) = 0$, i.e., $\alpha_i$ is a $2^{nd}$ zero-divisor. 
\end{remark}

\section{Distributional Sectional Category}\label{charrr}

Let $p : E\to B$ be a Hurewicz fibration. For each $n \ge 1$, define a space
$$
E_n(p) = \bigcup_{x \in B}\mathcal{B}_n  (p^{-1}(x)) = \left\{ \mu \in B_n(E) \hspace{1mm} \middle| \hspace{1mm} \supp(\mu) \subset p^{-1}(x), \hspace{1mm} x \in B \right\}
$$
and a map $\mathcal{B}_n(p) : E_n(p) \to B$ such that
$$
\mathcal{B}_n(p)(\mu) = x \hspace{4mm} \text{whenever} \hspace{4mm} \mu \in \mathcal{B}_n (p^{-1}(x)).
$$

\begin{prop}[\protect{\cite[Proposition 5.1]{DJ}}]\label{fibration}
The mapping $\mathcal{B}_n(p) : E_n(p) \to B$ is a Hurewicz fibration.
\end{prop}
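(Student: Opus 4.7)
The plan is to verify the homotopy lifting property (HLP) directly. Since $p:E\to B$ is a Hurewicz fibration, it admits a (global) lifting function $\Lambda:E\times_B B^I\to E^I$, where $E\times_B B^I=\{(e,\alpha)\mid p(e)=\alpha(0)\}$, such that $\Lambda(e,\alpha)(0)=e$ and $p\circ\Lambda(e,\alpha)=\alpha$ for all admissible $(e,\alpha)$. Given the HLP data for $\mathcal{B}_n(p)$, that is, a map $f:Y\to E_n(p)$ and a homotopy $H:Y\times I\to B$ with $H(y,0)=\mathcal{B}_n(p)(f(y))$, the task is to produce a lift $\wt H:Y\times I\to E_n(p)$ satisfying $\wt H(y,0)=f(y)$ and $\mathcal{B}_n(p)\circ\wt H=H$.

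The construction is atomic: for each $y\in Y$, write $f(y)=\sum_{i=1}^{k(y)}\lambda_i(y)\,e_i(y)$, where $e_i(y)\in p^{-1}(H(y,0))$. Lift each atom of $f(y)$ along the single path $H(y,\cdot)$ using $\Lambda$, and define
$$
\wt H(y,t)=\sum_{i=1}^{k(y)}\lambda_i(y)\,\Lambda\bigl(e_i(y),H(y,\cdot)\bigr)(t).
$$
The fiber condition $p(\Lambda(e_i(y),H(y,\cdot))(t))=H(y,t)$ ensures that $\wt H(y,t)\in\mathcal{B}_n(p^{-1}(H(y,t)))\subset E_n(p)$ and that $\mathcal{B}_n(p)\circ\wt H=H$, while $\Lambda(\cdot,\cdot)(0)=\mathrm{id}$ gives the initial condition $\wt H(y,0)=f(y)$.

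The main obstacle, as always for such constructions, is establishing continuity of $\wt H$ in the L\'evy--Prokhorov metric, since the atomic decomposition $f(y)=\sum\lambda_i(y)e_i(y)$ is \emph{not} continuous in $y$ (atoms can collide or split). The key point is that although the decomposition is not canonical, the assignment that pairs \emph{every} atom of $f(y)$ with the \emph{same} path $H(y,\cdot)$ and then applies $\Lambda$ is well-defined at the level of probability measures. More precisely, the diagonal-type map
$$
E_n(p)\times_B B^I\longrightarrow\mathcal{B}_n(E\times_B B^I),\qquad (\mu,\alpha)\longmapsto\sum_i\lambda_i(e_i,\alpha),
$$
is continuous (this uses standard continuity of the weighted-sum operation on $\mathcal{B}_n$ of a metric space), and $\Lambda$ induces a continuous map $\mathcal{B}_n(\Lambda):\mathcal{B}_n(E\times_B B^I)\to\mathcal{B}_n(E^I)$ by functoriality. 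Composing with the canonical continuous map $\mathcal{B}_n(E^I)\to(\mathcal{B}_n(E))^I$, $\nu\mapsto\bigl(t\mapsto\mathrm{ev}_t{}_*\nu\bigr)$, and with the input $y\mapsto(f(y),H(y,\cdot))$ yields $\wt H$ as a continuous map $Y\to E_n(p)^I$, equivalently $Y\times I\to E_n(p)$. This is essentially the $m=2$ argument from~\cite[Proposition 5.1]{DJ} applied verbatim, since nothing in it used the specific form of the fibration $\pi_2^X$; the argument depends only on $p$ being a Hurewicz fibration.
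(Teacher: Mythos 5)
Your proof is correct and follows the natural route: choose a lifting function $\Lambda$ for $p$, lift every atom of $f(y)$ along the \emph{single} path $H(y,\cdot)$, and then sidestep the non-continuity of the atomic decomposition $\mu\mapsto(\lambda_i,e_i)$ by exhibiting $\wt H$ as a composite of maps that are each manifestly continuous on $\mathcal{B}_n$ of the relevant metric spaces. In particular, your ``diagonal-type'' map $E_n(p)\times_B B^I\to\mathcal{B}_n(E\times_B B^I)$, $(\mu,\alpha)\mapsto(\iota_\alpha)_*\mu$ with $\iota_\alpha(e)=(e,\alpha)$, is the restriction of the $1$-Lipschitz map $\mathcal{B}_n(E)\times B^I\to\mathcal{B}_n(E\times B^I)$, $(\mu,\alpha)\mapsto\mu\times\delta_\alpha$ (take the max metric on $E\times B^I$ and check directly from the Lévy--Prokhorov definition), and the exchange $\mathcal{B}_n(E^I)\to\mathcal{B}_n(E)^I$ is continuous by the analogous trick applied to $(\nu,t)\mapsto(\mathrm{ev}_t)_*\nu$. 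The remaining pieces are functoriality of $\mathcal{B}_n$ on continuous maps of metric spaces (applied to $\Lambda$) and the exponential law over the locally compact $I$, both routine.

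The paper itself does not reprove this statement but cites it from~\cite{DJ}; your argument is the one that reference must in essence use (there is no way around building a lifting function for $\mathcal{B}_n(p)$ out of one for $p$, and the continuity bookkeeping is the only real content). Two small points worth tightening if you write this up fully: (i) the phrase ``standard continuity of the weighted-sum operation'' is vaguer than what you actually need — the precise lemma is that $(\mu,w)\mapsto\mu\times\delta_w$ is $1$-Lipschitz, as above; (ii) your final sentence refers to an ``$m=2$ argument from~\cite[Proposition 5.1]{DJ}'', but that proposition is the general statement about any Hurewicz fibration $p$, with no $m$ in sight — as you yourself note in the same breath, nothing in the construction uses the particular form $\pi_m^X$.
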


\begin{defn}
Given a fibration $p: E \to B$, its \textit{distributional Schwarz genus}, or alternatively, its \textit{distributional sectional category}, denoted $\dsecat(p)$, is the minimal number $n$ such that $\mathcal{B}_{n+1}(p) : E_{n+1}(p) \to B$ admits a section.
\end{defn}

Note that this definition is a distributional analog of the equivalent definition of $\secat$ as it follows from Proposition~\ref{newaddition}. From Proposition~\ref{newaddition}, it is also to see that $\dsecat(p)\le \secat(p)$ for each fibration $p:E\to B$.

We also note that the notion of analog sectional category was introduced and studied in~\cite{KW} which is closely related with the notion of $\dsecat$.

\begin{prop}\label{hinvsecat}
    $\dsecat$ is a homotopy invariant.
\end{prop}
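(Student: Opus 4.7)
The plan is to interpret homotopy invariance of $\dsecat$ in the standard sense for sectional category: given a commutative square
\[
\begin{tikzcd}
E \arrow{r}{\tilde f} \arrow{d}{p} & E' \arrow{d}{p'} \\
B \arrow{r}{f} & B'
\end{tikzcd}
\]
in which $p$ and $p'$ are Hurewicz fibrations and the horizontal maps $\tilde f$ and $f$ are homotopy equivalences, to show that $\dsecat(p) = \dsecat(p')$. By symmetry it suffices to prove one inequality, say $\dsecat(p') \le \dsecat(p)$, so I assume $\dsecat(p) = n$ and fix a section $s : B \to E_{n+1}(p)$ of $\mathcal{B}_{n+1}(p)$.

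First, I would use functoriality of $\mathcal{B}_{n+1}$ (as in the proofs of Propositions~\ref{homoinv} and~\ref{ineq4}) to obtain a continuous map $\tilde f_* : \mathcal{B}_{n+1}(E) \to \mathcal{B}_{n+1}(E')$. Commutativity of the square forces $\tilde f(p^{-1}(x)) \subset (p')^{-1}(f(x))$ for every $x \in B$, so $\tilde f_*$ restricts to a continuous map $E_{n+1}(p) \to E_{n+1}(p')$ satisfying the intertwining identity $\mathcal{B}_{n+1}(p') \circ \tilde f_* = f \circ \mathcal{B}_{n+1}(p)$. Picking a homotopy inverse $g : B' \to B$ of $f$, I then set
\[
s' \;=\; \tilde f_* \circ s \circ g : B' \to E_{n+1}(p'),
\]
and observe that $\mathcal{B}_{n+1}(p') \circ s' = f \circ g \simeq \mathbbm{1}_{B'}$, so at this stage $s'$ is only a homotopy section of $\mathcal{B}_{n+1}(p')$.

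To promote $s'$ to an honest section, I would invoke Proposition~\ref{fibration}: since $\mathcal{B}_{n+1}(p') : E_{n+1}(p') \to B'$ is itself a Hurewicz fibration, the homotopy lifting property applies to any homotopy $H : B' \times I \to B'$ from $f \circ g$ to $\mathbbm{1}_{B'}$, starting with the lift $s'$. The endpoint of the lifted homotopy is a continuous map $B' \to E_{n+1}(p')$ that covers $\mathbbm{1}_{B'}$, i.e., a section, giving $\dsecat(p') \le n$ as desired.

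The one step that requires real care is the functoriality of $\mathcal{B}_{n+1}$ used above: namely, continuity of $\tilde f_*$ with respect to the Lévy--Prokhorov metric, the fact that it restricts to a map between the subspaces $E_{n+1}(p) \to E_{n+1}(p')$, and the intertwining identity $\mathcal{B}_{n+1}(p') \circ \tilde f_* = f \circ \mathcal{B}_{n+1}(p)$ holding on the nose. Once these are in place, the conversion of a homotopy section into a genuine section via Proposition~\ref{fibration} is entirely standard, and the argument is symmetric in the roles of $p$ and $p'$.
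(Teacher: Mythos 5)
Your proposal is correct and follows essentially the same route as the paper: use functoriality of $\mathcal{B}_{n+1}$ to push a section forward via the homotopy equivalence, observe that the result is a homotopy section of the fibration $\mathcal{B}_{n+1}(p')$ (Proposition~\ref{fibration}), and upgrade it to a genuine section using the homotopy lifting property. The only cosmetic difference is that the paper's setup explicitly packages a homotopy inverse pair into a second commutative square, whereas you pick a homotopy inverse $g$ of $f$ on the fly; the mathematical content is identical.
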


\begin{proof}
    Given fibrations $p: E \to B$ and $q: Z \to C$ and the following commutative diagram where the horizontal maps are homotopy equivalences,
\begin{equation}
    \begin{tikzcd}[every arrow/.append style={shift left}]
E \arrow{r}{f} \arrow[swap]{d}{p}
& 
Z  \arrow{l}{{f' \vphantom{1}}}  \arrow{d}{q}  
\\
B \arrow{r}{g}
& 
C. \arrow{l}{{g' \vphantom{1}}} 
\end{tikzcd}
\end{equation}
 we need to show that $\dsecat(p) = \dsecat(q)$. For any $k \ge 1$, we first obtain fibrations $\mathcal{B}_k(p): E_k(p) \to B$ and $\mathcal{B}_k(q): Z_k(q) \to C$ by Proposition~\ref{fibration}. The functoriality of $\mathcal{B}_k$ gives a map $f_* : \mathcal{B}_k(E) \to \mathcal{B}_k(Z)$ such that
 $$
 f_*\left(\sum \lambda_{r} r\right) = \sum \lambda_{r} \hspace{0.5mm} f(r).
 $$
 Let $\mu = \sum \lambda_{r} r \in \mathcal{B}_k(p^{-1}(x)) \subset E_k(p)$ for some $x \in B$. So, $\text{supp}(\mu) \subset p^{-1}(x)$, which means $p(r) = x$ whenever $\lambda_r > 0$. Note that $q(f(r)) = g(p(r)) = g(x)$ whenever $\lambda_r > 0$. So, $\text{supp}(f_*(\mu)) \subset q^{-1}(g(x))$ and thus, $f_*(\mu)\in\mathcal B_k(q^{-1}(g(x))) \subset Z_k(q)$. Let $F$ denote the restriction of $f_*$ to $E_k(p)$. Thus, $F: E_k(p) \to Z_k(q)$ is defined. Similarly, $F': Z_k(q) \to E_k(p)$ is obtained as the restriction of $f'_*$ to $Z_k(q)$. If $\mu \in \mathcal{B}_k(p^{-1}(x)) \subset E_k(p)$ for some $x \in B$, then
 $$
 \left(\mathcal{B}_k(q) \hspace{0.5mm} F \right)(\mu) = g(x) = \left(g \hspace{0.5mm} \mathcal{B}_k(p)\right)(\mu).
 $$
Similarly, if $\vartheta \in B_k(q^{-1}(y)) \subset Z_k(q)$ for some $y \in C$, then
 $$
 \left(\mathcal{B}_k(p) \hspace{0.5mm} F'\right) (\vartheta) = g'(y) = \left(g' \hspace{0.5mm} \mathcal{B}_k(q)\right)(\vartheta).
 $$
So we get the following commutative diagram.
 \begin{equation}
    \begin{tikzcd}[every arrow/.append style={shift left}]
E_k(p) \arrow{r}{F} \arrow[swap]{d}{\mathcal{B}_k(p)}
& 
Z_k(q) \arrow{l}{{F' \vphantom{1}}}  \arrow{d}{\mathcal{B}_k(q)} 
\\
B \arrow{r}{g}
& 
C. \arrow{l}{{g' \vphantom{1}}} 
\end{tikzcd}
\end{equation}
Let $\dsecat(p) = n -1$. Then by definition, there exists a section $s: B \to E_n(p)$ of $\mathcal{B}_n(p)$. Consider $F s g' : C \to Z_n(q)$. See that
$$
\mathcal{B}_n(q) F s g' = g \hspace{0.5mm} \mathcal{B}_n(p) s g' = gg' \simeq \mathbbm{1}_{C}.
$$
So, $Fsg'$ is a homotopy section of $\mathcal{B}_n(q)$. Since $\mathcal{B}_n(q)$ is a fibration, the homotopy section $Fsg'$ gives a section of $\mathcal{B}_n(q)$. So, $\dsecat(q) \le n-1 = \dsecat(p)$. Similarly, if $\dsecat(q) = m-1$ and $s': C \to Z_m(q)$ is a section of $\mathcal{B}_m(q)$, then $F's'g$ helps obtain a section of $\mathcal{B}_m(p)$. This gives $\dsecat(p) \le m-1 = \dsecat(q)$.
\end{proof}

Let $h: A \to B$ be any continuous map. It has a fibrational substitute, say $p_{h}: E \to B$. Since any two fibrational substitutes of a given map are fiberwise homotopy equivalent, using Proposition~\ref{hinvsecat}, we can define
$$
\dsecat(h): = \dsecat\left(p_h\right).
$$

\begin{remark}
    We can also define the distributional sectional category of group homomorphisms between discrete groups. Let $\phi: \Gamma \to \Lambda$ be such a homomorphism. This gives a map $B\phi: B \Gamma \to B\Lambda$ that induces $\phi$ at the level of fundamental groups. We define $\dsecat(\phi): = \dsecat(B\phi).$
\end{remark}

\subsection{For $\dTC_m$}\label{chardtcm}
For $m\ge 2$, upon taking $p = \pi_m^X, E = P(X),$ and $B = X^m$, and observing that an $(n+1)$-distributed $m$-navigation algorithm on $X$ produces a section of the fibration $\mathcal{B}_{n+1}(\pi_{m}^X)$ and vice-versa, we get the following Ganea--Schwarz-type characterization of $\dTC_m$.

\begin{prop}\label{chartcm}
For $m\ge 2$, $\dTC_m(X) \le n$ if and only if the fibration
$$
\mathcal{B}_{n+1}(\pi_m^X) : P(X)_{n+1}(\pi_m^X) \to X^m
$$
admits a section.
\end{prop}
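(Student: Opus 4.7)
The plan is to verify that both conditions describe the same continuous assignment, just viewed through two equivalent lenses. First, I would observe that by Proposition~\ref{fibration} applied to $p = \pi_m^X$, the fiber of $\mathcal{B}_{n+1}(\pi_m^X)$ over an $m$-tuple $\ov{x} = (x_1, \ldots, x_m) \in X^m$ is exactly $\mathcal{B}_{n+1}((\pi_m^X)^{-1}(\ov{x})) = \mathcal{B}_{n+1}(P(\ov{x}))$, since $(\pi_m^X)^{-1}(\ov{x}) = P(\ov{x})$ by definition of $\pi_m^X$ and $P(\ov{x})$. Moreover, the total space $P(X)_{n+1}(\pi_m^X) = \bigcup_{\ov{x} \in X^m} \mathcal{B}_{n+1}(P(\ov{x}))$ sits as a subspace of $\mathcal{B}_{n+1}(P(X))$ equipped with the subspace topology, and $\mathcal{B}_{n+1}(\pi_m^X)$ is the map sending any measure supported on $P(\ov{x})$ to $\ov{x}$.

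For the forward implication, I would assume $\dTC_m(X) \le n$ and let $s_m : X^m \to \mathcal{B}_{n+1}(P(X))$ be an $(n+1)$-distributed $m$-navigation algorithm. The defining condition $s_m(\ov{x}) \in \mathcal{B}_{n+1}(P(\ov{x}))$ for every $\ov{x} \in X^m$ says precisely that the image of $s_m$ is contained in the subspace $P(X)_{n+1}(\pi_m^X) \subset \mathcal{B}_{n+1}(P(X))$. Hence $s_m$ corestricts to a continuous map $s : X^m \to P(X)_{n+1}(\pi_m^X)$, and by the description of $\mathcal{B}_{n+1}(\pi_m^X)$ above we have $\mathcal{B}_{n+1}(\pi_m^X) \circ s = \mathbbm{1}_{X^m}$; that is, $s$ is a section.

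For the reverse implication, given a section $s : X^m \to P(X)_{n+1}(\pi_m^X)$ of $\mathcal{B}_{n+1}(\pi_m^X)$, I would compose with the inclusion $P(X)_{n+1}(\pi_m^X) \hookrightarrow \mathcal{B}_{n+1}(P(X))$ to obtain a continuous map $s_m : X^m \to \mathcal{B}_{n+1}(P(X))$. The section identity $\mathcal{B}_{n+1}(\pi_m^X)(s(\ov{x})) = \ov{x}$ translates exactly into $\supp(s_m(\ov{x})) \subset P(\ov{x})$, so $s_m(\ov{x}) \in \mathcal{B}_{n+1}(P(\ov{x}))$ for every $\ov{x}$. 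Thus $s_m$ is an $(n+1)$-distributed $m$-navigation algorithm on $X$, giving $\dTC_m(X) \le n$.

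There is no real technical obstacle here; the proof is essentially an unpacking of definitions. The only point that merits care is to note that the topology on $P(X)_{n+1}(\pi_m^X)$ is the subspace topology inherited from $\mathcal{B}_{n+1}(P(X))$, which ensures that continuity transfers in both directions between the map $s_m$ and the section $s$.
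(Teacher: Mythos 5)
Your proof is correct and matches the paper's approach exactly: the paper states Proposition~\ref{chartcm} as an immediate consequence of observing that an $(n+1)$-distributed $m$-navigation algorithm on $X$ is precisely the same data as a section of $\mathcal{B}_{n+1}(\pi_m^X)$, and your argument is a careful unpacking of that identification. The only addition you make is to spell out the subspace-topology point ensuring continuity transfers in both directions, which the paper leaves implicit.
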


Therefore, for any space $X$ and $m \ge 2$, we conclude that
$$
\dTC_m(X) = \dsecat\left(\pi_m^X\right).
$$
\begin{remark}
    This realization provides an alternate proof of the homotopy invariance of $\dTC_m$ (see Proposition~\ref{homoinv}) in light of Proposition~\ref{hinvsecat}. 
\end{remark}

If $\Delta_{m}^X: X \to X^m$ denotes the diagonal map, then since $\pi_m^X$ is the fibrational substitute of $\Delta_m^X$, we have $\dTC_m(X) = \dsecat(\Delta_m^X)$.

\subsection{For $\dcat$}\label{fornotations}
A Ganea--Schwarz-type characterization of $\dcat(X)$ was provided in~\cite[Proposition 5.3]{DJ}. For any given $m \ge 1$, we now obtain a slightly different characterization of $\dcat(X^m)$ as follows. Let $\xi_m^X : P(X) \to X^m$ be the fibration
$$
\xi_m^X (\phi)= \left(\phi\left(\frac{1}{m}\right), \phi\left(\frac{2}{m}\right), \ldots, \phi\left(\frac{m-1}{m}\right), \phi\left(1\right) \right).
$$
From this, we can form the fibration $\mathcal{B}_{n+1}(\xi_m^X): P(X)_{n+1}(\xi_m^X) \to X^m$. For some fixed basepoint $x_0 \in X$, let
$$
\mathcal{P}_{n+1}(X) = \left\{\sum \lambda_{\psi} \hspace{0.5mm} \psi \in \mathcal{B}_{n+1}(P(X)) \hspace{1mm} \middle| \hspace{1mm} \psi(0) = x_0 \text{ when } \lambda_\psi>0\right\}.
$$
If $\mathcal{A}_{n+1}(X) = P(X)_{n+1}(\xi_m^X) \cap \mathcal{P}_{n+1}(X)$, then we let the map $\mathcal{C}_{n+1}(\xi_m^X)$ denote the restriction of the fibration $\mathcal{B}_{n+1}(\xi_m^X)$ to the subspace $\mathcal{A}_{n+1}(X)$. The following statement generalizes~\cite[Proposition 5.3]{DJ}.

\begin{prop}\label{catnew}
For $m\ge 1$, $\dcat(X^m) \le n$ if and only if the map
$$
\mathcal{C}_{n+1}\left(\xi_m^X \right): \mathcal{A}_{n+1}(X) \to X^m
$$
admits a section.
\end{prop}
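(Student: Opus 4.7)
First I would extend the Ganea--Schwarz-type argument used for $\dcat$ in~\cite[Proposition 5.3]{DJ}. The plan rests on the key observation that a path $\psi \in P(X)$ with $\psi(0)=x_0$ canonically encodes a path in $X^m$ from $(x_0,\ldots,x_0)$ to $\xi_m^X(\psi) = (\psi(1/m),\ldots,\psi(1))$ via the formula $t \mapsto (\psi(t/m), \psi(2t/m),\ldots, \psi(t))$; conversely an $m$-tuple of paths in $X$ starting at $\ov{x}$ and ending at $x_0$ can be reparametrized and concatenated into a single path in $X$ passing through the prescribed points at times $i/m$.

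For the ``if'' direction, I would suppose $s: X^m \to \mathcal{A}_{n+1}(X)$ is a section of $\mathcal{C}_{n+1}(\xi_m^X)$ and write $s(\ov{x}) = \sum \lambda_\psi\,\psi$ for $\ov{x}=(x_1,\ldots,x_m)$, where every $\psi$ in the support satisfies $\psi(0)=x_0$ and $\psi(i/m)=x_i$ for $1\le i\le m$. For each such $\psi$ I would define $\wt{\psi}\in P(X^m)$ by $\wt{\psi}(t)=\bigl(\psi((1-t)/m),\ldots,\psi(1-t)\bigr)$, so that $\wt{\psi}(0)=\ov{x}$ and $\wt{\psi}(1)=(x_0,\ldots,x_0)$. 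Continuity of $\psi\mapsto\wt{\psi}$, combined with functoriality of $\mathcal{B}_{n+1}$, then yields a continuous $(n+1)$-contraction $H(\ov{x})=\sum\lambda_\psi\,\wt{\psi}$ of $X^m$ to $(x_0,\ldots,x_0)$, giving $\dcat(X^m)\le n$.

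For the converse, I would assume $\dcat(X^m)\le n$ witnessed by an $(n+1)$-contraction $H:X^m \to \mathcal{B}_{n+1}(P(X^m))$ to the basepoint $(x_0,\ldots,x_0)$ (which we may arrange using path-connectedness of $X$). For each $\gamma=(\gamma_1,\ldots,\gamma_m)\in\supp(H(\ov{x}))$ one has $\gamma_i(0)=x_i$ and $\gamma_i(1)=x_0$. I would then build a single path $\psi_\gamma\in P(X)$ piecewise: on $[0,1/m]$ set $\psi_\gamma(t)=\ov{\gamma}_1(mt)$; on $[(i-1)/m,(2i-1)/(2m)]$ for $i\ge 2$, set $\psi_\gamma(t)=\gamma_{i-1}(2m(t-(i-1)/m))$; and on $[(2i-1)/(2m),i/m]$ for $i\ge 2$, set $\psi_\gamma(t)=\ov{\gamma}_i(2m(t-(2i-1)/(2m)))$. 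By construction $\psi_\gamma(0)=x_0$ and $\psi_\gamma(i/m)=x_i$ for all $i$, so $\xi_m^X(\psi_\gamma)=\ov{x}$ and $\sum \lambda_\gamma\,\psi_\gamma\in\mathcal{A}_{n+1}(X)$. Continuity of $\gamma\mapsto\psi_\gamma$ follows from the concatenation--reparametrization continuity in the spirit of Lemma~\ref{iscont}, and functoriality of $\mathcal{B}_{n+1}$ then produces the desired continuous section of $\mathcal{C}_{n+1}(\xi_m^X)$.

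The main obstacle is the bookkeeping in the converse direction: verifying that the piecewise definition of $\psi_\gamma$ is genuinely continuous in $\gamma$ and that $\sum \lambda_\gamma\,\psi_\gamma$ lands in $\mathcal{A}_{n+1}(X)$ rather than just in $\mathcal{B}_{n+1}(P(X))$. Both assertions reduce to standard path-concatenation continuity exactly as handled by Lemma~\ref{iscont}, so no genuinely new ingredient beyond what is already in the paper is required.
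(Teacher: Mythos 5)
Your proof is correct and takes essentially the same route as the paper's: both directions translate between paths in $X$ based at $x_0$ that hit $x_i$ at time $i/m$ and paths in $X^m$ from $\ov{x}$ to a basepoint, by projecting onto coordinates, reparametrizing, and concatenating in the spirit of Lemma~\ref{iscont}. Your only departure is taking the basepoint of $X^m$ to be diagonal, $(x_0,\ldots,x_0)$, which is legitimate by path-connectedness and lets you omit the auxiliary connecting paths $\gamma_i$ (from $x_i^0$ to $x_{i+1}^0$) that the paper introduces to handle an arbitrary basepoint.
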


\begin{proof}
Let $\dcat(X^m) \le n$. Then for a fixed basepoint $\ov{x}_0 = (x_1^0, \ldots, x_m^0) \in X^m$, there exists an $(n+1)$-contraction 
$$
H: X^m \to \mathcal{B}_{n+1}(P_0(X^m))
$$
of $X^m$ to $\ov{x}_0$. For $\ov{x} = (x_1, \ldots, x_m) \in X^m$, let $H(\ov{x}) = \sum \lambda_{\phi} \phi$. Let 
$$
\beta_i = \text{proj}_i \hspace{0.3mm} \ov{\phi} \in P(X),
$$
where $\ov{\phi}(t) = \phi(1-t)$ and $\text{proj}_i:X^m\to X$ is the projection onto the $i$-th factor. Let $\gamma_i \in P(X)$ be a path from $x_i^0$ to $x_{i+1}^0$ for each $i$. Define 
$$
\widehat{\phi} = \beta_1 \star \left(\ov{\beta}_1 \cdot \gamma_1 \cdot \beta_2\right) \star \left(\ov{\beta}_2 \cdot \gamma_2 \cdot \beta_3 \right) \star \cdots \star \left(\ov{\beta}_{m-1} \cdot \gamma_{m-1} \cdot \beta_m\right),
$$
where $\star$ is the concatenation via the map $\theta_m$ from Lemma~\ref{iscont} with $a_i = i/m$ for all $1 \le i \le m-1$. Note that $\widehat{\phi}(0) = x_1^0$, which we shall regard as the basepoint of $X$. Finally, define $K : X^m \to \mathcal{B}_{n+1}(P(X))$ as
$$
K(\ov{x}) = \sum_{\phi \hspace{1mm}  \in \hspace{1mm} \supp(H(\ov{x}))} \hspace{1mm} \lambda_{\phi}\hspace{0.5mm} \widehat{\phi}.
$$
By definition, the range of $K$ is contained in $\mathcal{A}_{n+1}(X)$. Clearly, we have that $\mathcal{C}_{n+1}\left(\xi_m^X\right) K = \mathbbm{1}_{X^m}$. Therefore, $K$ is a section of $\mathcal{C}_{n+1}(\xi_m^X)$. For the converse, let 
$$
\psi : X^m \to \mathcal{A}_{n+1}(X)
$$ 
be a section of $\mathcal{C}_{n+1}(\xi_m^X)$, with a chosen basepoint $x_0 \in X$. Let $\psi(\ov{x}) = \sum \lambda_{\phi} \phi$. For each such path $\phi \in \supp(\psi(\ov{x}))$, we can define $\phi' = (\phi_1, \phi_2, \ldots, \phi_{m})$, where 
$$
\phi_i(s) = \phi \left(\frac{i(1-s)}{m} \right)
$$
for each $1 \le i \le m$. Then $\phi'(0) = \ov{x}$ and $\phi'(1) = (x_0, \ldots, x_0) \in X^{m}$. So, the map $\sigma_m : X^{m} \to \mathcal{B}_{n+1}(P_0 (X^{m}))$ defined by
$$
\sigma_m (\ov{x}) = \sum_{\phi \hspace{1mm} \in \hspace{1mm} \supp(\psi(\ov{x}))} \hspace{1mm} \lambda_{\phi} \hspace{0.5mm} \phi'
$$
is an $(n+1)$-contraction of $X^{m}$ to $(x_0, \ldots, x_0)$. Hence, $\dcat(X^{m}) \le n$.
\end{proof}

Therefore, for any space $X$ and $m \ge 1$, it follows that 
$$
\dcat(X^m) = \dsecat\left(\xi_m^X\right).
$$

For a fixed $x_0 \in X$, let $P_0(X)$ denote the space of paths that end at $x_0$, and let $p_0^X:P_0(X) \to X$ be the fibration that evaluates each path at $t = 0$. Then, in particular for $m = 1$, we have $\dcat(X) = \dsecat(\xi_1^X) = \dsecat(p_0^X).$
\begin{remark}
    This gives an alternate proof of the homotopy invariance of $\dcat$ obtained in~\cite[Proposition 3.2]{DJ}.
\end{remark}

Also, for the inclusion map $\iota_X: x_0 \hookrightarrow X$, we have $\dcat(X) = \dsecat(\iota_X).$ This is in analogy with the well-known fact that $\cat(X)=\secat(\iota_X)$, see~\cite[Page 342]{Ja}.

\subsection{A lower bound for $\dsecat$}
Let $p: E \to B$ be a map and for a fixed $n \ge 1$ let $p_n: SP^{n!}(E) \to SP^{n!}(B)$ be induced by the functor $SP^{n!}$. As in Section~\ref{4.1}, we let $\delta_n: B \to SP^{n!}(B)$ be the diagonal inclusion. In light of Schwarz's cohomological lower bound~\cite{Sch} for $\secat(p)$, and motivated by the analogy between the definitions of $\dsecat(p)$ and $\secat(p)$, we propose the following generalization of Theorem~\ref{boundtcm} and~\cite[Theorem 4.7]{DJ}.

\begin{conjec}
    Suppose $\alpha_i^* \in H^{k_i}(SP^{n!}(B);R)$, $1 \le i \le n$, for some ring $R$ and $k_i \ge 1$, are cohomology classes such that $p_n^*(\alpha_i^*) = 0$. If $\alpha_i$ are their images under the induced homomorphism $\delta_n^*$ such that $\alpha_1 \smile \cdots \smile \alpha_n \neq 0$, then $\dsecat(p) \ge n$.
\end{conjec}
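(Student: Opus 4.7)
The plan is to adapt the proof of Theorem~\ref{boundtcm} almost verbatim. The deformation argument via Lemma~\ref{first} used there served only to convert the hypothesis $\Delta_n^*(\alpha_i^*)=0$ into $\zeta_n^*(\alpha_i^*)=0$; here the analogous condition $p_n^*(\alpha_i^*)=0$ is assumed directly, so that step simply drops out. Hence the proof of the conjecture should be slightly shorter than the proof of Theorem~\ref{boundtcm}.

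I would assume for contradiction that $\dsecat(p)\le n-1$, so that $\mathcal{B}_n(p):E_n(p)\to B$ admits a section $s$. Mimicking Lemma~\ref{second}, I would partition $B$ into the subsets $A_i = \{x\in B \mid |\supp(s(x))| = i\}$ for $1 \le i \le n$, and form the pullback square
\[
\begin{tikzcd}
\mathcal{D}_n \arrow{r}{a} \arrow[swap]{d}{\sigma_n} & SP^{n!}(E) \arrow[swap]{d}{p_n}\\
B \arrow{r}{\delta_n} & SP^{n!}(B).
\end{tikzcd}
\]
The continuous map $H_i: A_i \to SP^{n!}(E)$ defined by $H_i(x) = \sum_{e \in \supp(s(x))} (n!/i)\, e$ satisfies $p_n H_i = \delta_n|_{A_i}$, so the universal property of the pullback yields a section $\tau_i$ of $\sigma_n$ over $A_i$.

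Next I would execute the cohomology argument. Commutativity of the pullback square gives $a^* p_n^* = \sigma_n^* \delta_n^*$, so the hypothesis $p_n^*(\alpha_i^*) = 0$ forces $\sigma_n^*(\alpha_i) = 0$, where $\alpha_i := \delta_n^*(\alpha_i^*) \in H^{k_i}(B;R)$. By naturality, the restriction $\sigma_n^*(\alpha_i)|_{\sigma_n^{-1}(A_i)}$ also vanishes; since $\tau_i$ is a section of $\sigma_n|_{\sigma_n^{-1}(A_i)}$, the induced pullback is injective on cohomology, so $\alpha_i|_{A_i} = 0$. The long exact sequence of the pair $(B,A_i)$ in Alexander--Spanier cohomology then lifts $\alpha_i$ to a class $\ov{\alpha}_i \in H^{k_i}(B, A_i;R)$, and cupping together yields $\ov{\alpha}_1 \smile \cdots \smile \ov{\alpha}_n \in H^{\sum k_i}(B, \bigcup_i A_i;R) = H^{\sum k_i}(B, B;R) = 0$, which maps to $\alpha_1 \smile \cdots \smile \alpha_n \neq 0$ in $H^{\sum k_i}(B;R)$. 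This contradiction forces $\dsecat(p) \ge n$.

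The main technical subtlety, inherited from Theorem~\ref{boundtcm}, is that the subsets $A_i$ are a priori neither open nor closed in $B$, so the long exact sequence of the pair is not automatic in singular cohomology; this is precisely why working in Alexander--Spanier cohomology is essential. Beyond that, the argument is a formal diagram chase together with the basic properties of pullbacks, symmetric products, and relative cup products, so I do not anticipate any substantive obstacle.
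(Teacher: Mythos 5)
The statement you were asked to prove is presented in the paper only as a \emph{conjecture} (it appears at the end of Section~\ref{charrr} with the remark "we propose the following generalization of Theorem~\ref{boundtcm}"), and the paper supplies no proof. There is therefore no argument of the author's to compare yours against; what follows assesses your proposal on its own terms.

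Your proposal does appear to give a correct proof, and it is precisely the abstraction of the proof of Theorem~\ref{boundtcm} that the phrase "generalization of Theorem~\ref{boundtcm}" suggests: you replace $\pi_m^X\colon P(X)\to X^m$ by $p\colon E\to B$, $\zeta_n=SP^{n!}(\pi_m^X)$ by $p_n=SP^{n!}(p)$, and $\pa_n$ by $\delta_n$, and you correctly observe that the only role of Lemma~\ref{first} in the paper was to convert $\Delta_n^*(\alpha_i^*)=0$ into $\zeta_n^*(\alpha_i^*)=0$, which is assumed here outright as $p_n^*(\alpha_i^*)=0$. The stratification $A_i=\{x\in B\mid |\supp(s(x))|=i\}$, the map $H_i(x)=\sum_{e\in\supp(s(x))}\frac{n!}{i}\,e$ with $p_nH_i=\delta_n|_{A_i}$, the section $\tau_i$ from the universal property of the pullback, and the relative cup-product argument in Alexander--Spanier cohomology all carry over verbatim from Lemma~\ref{second} and Theorem~\ref{boundtcm}, and none of those steps uses anything specific to the path fibration. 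Two points you should make explicit to turn this into a complete write-up. First, the conjecture allows $p$ to be an arbitrary continuous map, whereas you silently take $p$ to be a fibration so that a section of $\mathcal{B}_n(p)$ exists when $\dsecat(p)\le n-1$; to reduce to this case, note that if $\phi\colon E\to E'$ is the homotopy equivalence to the fibrational substitute $p_h$ with $p_h\phi=p$, then $SP^{n!}(\phi)$ is a homotopy equivalence and $p_n=(p_h)_n\circ SP^{n!}(\phi)$, so $\Ker(p_n^*)=\Ker((p_h)_n^*)$ and the hypothesis transfers. Second, the phrase "$\sigma_n^*$ is injective" should be stated as injectivity of the restricted map $\bigl(\sigma_n|_{\sigma_n^{-1}(A_i)}\bigr)^*\colon H^*(A_i;R)\to H^*\bigl(\sigma_n^{-1}(A_i);R\bigr)$, which is what the left inverse $\tau_i^*$ actually furnishes; applied to the restriction of $\sigma_n^*(\alpha_i)$, this gives $\alpha_i|_{A_i}=0$, and the remainder of your argument then runs correctly. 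You should also note, as the paper does for Lemma~\ref{second}, that the continuity of $H_i$ on the stratum $A_i$ follows because near a point of $A_i$ the $i$ support points can neither merge nor split, so by a pigeonhole argument in the L\'evy--Prokhorov topology they vary continuously; this is the one genuinely nontrivial step inherited from the paper and is worth spelling out. With those clarifications I see no gap, so your argument would in fact resolve the conjecture.
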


\section{For $H$-spaces}\label{forhspaces}
\begin{defn}
 A CW complex $X$ is called an \textit{$H$-space} if there exists a basepoint $e \in X$, called an {\em identity of} $X$, and a map $\nu: X \times X \to X$, called a {\em product} on $X$, such that the maps $\nu(e,\cdot)$ and $\nu(\cdot,e)$ are homotopic to the identity.
\end{defn}

\begin{remark}
For an $H$-space $X$, it is easy to see that the maps $\nu(e,\cdot)$ and $\nu(\cdot,e)$ are homotopic to the identity if and only if $\nu(x,e)=x=\nu(e,x)$ for all $x\in X$, see, for example,~\cite[Page 1831]{LS}.
\end{remark}

For classical sequential topological complexity and LS-category, when a CW complex $X$ is an $H$-space, then $\TC_{m+1}(X) = \cat(X^m)$ holds for all $m \ge 1$~\cite{LS}. For $\dTC$ and $\dcat$, when $X$ is a topological group, then $\dTC(X) = \dcat(X)$,~\cite{DJ}. 

Here, using ideas from~\cite[Theorem 1]{LS}, we obtain the following generalization.

\begin{theorem}\label{hspaces}
    If $X$ is an $H$-space, then $\dTC_{m+1}(X) = \dcat(X^m)$ for all $m \ge 1$.
\end{theorem}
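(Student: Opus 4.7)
The inequality $\dcat(X^m) \le \dTC_{m+1}(X)$ is immediate from Proposition~\ref{ineq2} (applied with $m+1$ in place of $m$), so the content of the theorem lies in the reverse bound $\dTC_{m+1}(X) \le \dcat(X^m)$. My plan, following the strategy of~\cite[Theorem~1]{LS} for the classical identity $\TC_{m+1}(X) = \cat(X^m)$, is to promote an $(n+1)$-contraction of $X^m$ into an $(n+1)$-distributed $(m+1)$-navigation algorithm on $X$ using the $H$-space multiplication $\nu$. Since $X$ is a path-connected CW $H$-space, it admits a continuous homotopy inverse $\iota:X\to X$, whence a continuous map $K:X\times X\to P(X)$ with $K(y,z)(0)=\nu(y,\nu(\iota(y),z))$ and $K(y,z)(1)=z$, arising from the fact that the shear $(y,z)\mapsto(y,\nu(y,z))$ is a homotopy equivalence on $X\times X$.

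Set $n=\dcat(X^m)$ and fix an $(n+1)$-contraction $H:X^m\to\mathcal{B}_{n+1}(P_0(X^m))$ of $X^m$ to $\vec{e}=(e,\dots,e)$. For $(x_1,\dots,x_{m+1})\in X^{m+1}$, form $\vec{h}=(\nu(\iota(x_1),x_2),\dots,\nu(\iota(x_1),x_{m+1}))\in X^m$ and write $H(\vec{h})=\sum_j\lambda_j\vec{\beta}_j$ with $\vec{\beta}_j=(\beta_{j,2},\dots,\beta_{j,m+1})$, $\beta_{j,i}(0)=\nu(\iota(x_1),x_i)$, and $\beta_{j,i}(1)=e$. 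The paths $\alpha_{j,i}(t)=\nu(x_1,\bar{\beta}_{j,i}(t))$ run from $x_1$ to $\nu(x_1,\nu(\iota(x_1),x_i))$, and concatenating each with $K(x_1,x_i)$ yields paths $\gamma_{j,i}:x_1\leadsto x_i$ depending continuously on $(x_1,x_i,j)$. For each $j$, I then assemble a single path $\phi_j\in P(X)$ satisfying $\phi_j((i-1)/m)=x_i$ for all $1\le i\le m+1$ by taking $\phi_j$ on $[0,1/m]$ to be $\gamma_{j,2}$ reparametrized, and on $[(i-1)/m,i/m]$ for $2\le i\le m$ to be the concatenation $\bar{\gamma}_{j,i}\cdot\gamma_{j,i+1}:x_i\leadsto x_1\leadsto x_{i+1}$ reparametrized, all gluings being handled by iterated applications of Lemma~\ref{iscont} (e.g.\ with $\theta_m$ and $a_i=m/i$).

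The assignment $(x_1,\dots,x_{m+1})\mapsto\sum_j\lambda_j\phi_j$ is continuous as a map $X^{m+1}\to\mathcal{B}_{n+1}(P(X))$ via the functoriality of $\mathcal{B}_{n+1}$ and furnishes the required navigation algorithm, so $\dTC_{m+1}(X)\le n$. The main obstacle will be to glue the $H$-space translation $\nu(x_1,-)$, the correction $K(x_1,x_i)$, the reversal, and the iterated concatenation into a single globally continuous map whose output measures retain support size at most $n+1$; this relies crucially on the existence of the continuous homotopy inverse guaranteed for path-connected CW $H$-spaces.
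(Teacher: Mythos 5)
Your strategy is genuinely different from the paper's: rather than passing through the Ganea--Schwarz characterizations of $\dcat(X^m)$ and $\dTC_{m+1}(X)$ from Section~\ref{charrr} and producing a \emph{homotopy} section of the fibration $\mathcal{B}_{n+1}(\pi_{m+1}^X)$ which is then upgraded to an actual section via the homotopy lifting property, you build the $(n+1)$-distributed $(m+1)$-navigation algorithm directly, hitting the required points on the nose by means of an explicit correction path. This is more elementary --- it avoids the machinery of Section~\ref{charrr} entirely and is in the same spirit as the proofs of Propositions~\ref{homoinv} and~\ref{ineq3}.

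There is, however, a gap in your construction of the correction path $K$. You assert that the homotopy inverse $\iota$ of the $H$-space yields a continuous $K:X\times X\to P(X)$ with $K(y,z)(0)=\nu(y,\nu(\iota(y),z))$ and $K(y,z)(1)=z$; equivalently, that the map $(y,z)\mapsto\nu(y,\nu(\iota(y),z))$ is homotopic to $\mathrm{proj}_2$. This does \emph{not} follow merely from the shear being a homotopy equivalence. What does follow (as in~\cite[Theorem~1]{LS}, which the paper's proof invokes) is the existence of \emph{some} continuous $D:X\times X\to X$ with $\nu\circ(\mathrm{proj}_1,D)\simeq\mathrm{proj}_2$, obtained as $D=\mathrm{proj}_2\circ\mu_1^{-1}$ for a homotopy inverse $\mu_1^{-1}$ of the shear $\mu_1(y,z)=(y,\nu(y,z))$; but in the absence of homotopy associativity there is no reason $D$ should take the specific form $D(y,z)=\nu(\iota(y),z)$ (for a topological group it does, on the nose, but $H$-spaces such as $S^7$ need not be homotopy associative). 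The fix is to replace $\nu(\iota(x_1),x_i)$ by $D(x_1,x_i)$ throughout; then $K$ exists by construction, and the rest of your argument --- the endpoint identities $\phi_j\!\left((i-1)/m\right)=x_i$, the gluing via Lemma~\ref{iscont} with $a_i=m/i$, and continuity via functoriality of $\mathcal{B}_{n+1}$ as in Propositions~\ref{homoinv} and~\ref{ineq3} --- goes through unchanged. The paper's route has the advantage that the homotopy $\nu\circ(\mathrm{proj}_1,D)\simeq\mathrm{proj}_2$ is consumed directly by the fibration property rather than having to be converted into a family of paths; your route trades the sectional-category formalism for that explicit conversion.
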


\begin{proof}
Since $X$ is a CW $H$-space, we have an identity element $e \in X$ and a product $\nu:X \times X \to X$ as above. Following the proof of~\cite[Theorem 1]{LS}, we obtain a map $D : X \times X \to X$ such that $\text{proj}_1 \bullet D \simeq \text{proj}_2$, where $\bullet$ is the product in $[X \times X, X]$ induced by $\nu$ and $\text{proj}_i:X \times X\to X$ are the projections onto the $i$-th factor. Here, the product $\bullet$ on $[X\times X,X]$ is such that any equation $a\bullet z = b$ for $a,b\in [X\times X,X]$ has a unique solution $z\in [X\times X,X]$. We define a map $f_m: X^{m+1} \to X^m$ as 
$$
f_m \left(x,x_1,x_2, \ldots, x_m \right) = \left( D\left(x,x_1\right),D\left(x,x_2\right), \ldots, D\left(x,x_m\right)\right).
$$
Let $\dcat(X^m) = n$. In the notations of Section~\ref{fornotations}, consider the following pullback.
\begin{equation}\label{fours}
\begin{tikzcd}[contains/.style = {draw=none,"\in" description,sloped}]
Q_m  \arrow{r} \arrow[swap]{d}{q_m}
& 
\mathcal{A}_{n+1}(X) \arrow{d}{\mathcal{C}_{n+1}(\xi_m^X)}  
\\
X^{m+1} \arrow{r}{f_m} 
& 
X^m.
\end{tikzcd}
\end{equation}
Here, we regard $e \in X$ as the basepoint of $X$ to form $\mathcal{P}_{n+1}(X)$, and thus $\mathcal{A}_{n+1}(X)$. By definition, the subset $Q_m \subset X^{m+1} \times \mathcal{A}_{n+1}(X)$ is
$$
Q_m = \left\{\left( \left(x, x_1, \ldots, x_m\right), \sum \lambda_{\phi} \phi \right) \hspace{1mm} \middle| \hspace{1mm} \phi(0) = e, \phi\left(\frac{i}{m}\right) = D\left(x,x_i \right) \text{ for all } 1 \le i \le m\right\},
$$
and $q_m$ is the projection onto $X^{m+1}$. For each $y \in X$ and $\psi\in P(X)$, we define a path $\nu(y,\psi)\in P(X)$ such that $\nu(y,\psi)(t) = \nu(y,\psi(t))$ for all $t\in I$. Let us define a map $\Theta_m : Q_m \to \mathcal{B}_{n+1}(P(X))$ as
$$
\Theta_m \left( \left(x, x_1, \ldots, x_m\right), \sum \lambda_{\phi} \phi \right) = \sum \lambda_{\phi} \hspace{0.5mm} \nu(x,\phi).
$$
Thus, applying the fibration $\mathcal{B}_{n+1}(\pi_{m+1})$ from Section~\ref{chardtcm}, we get the following:
$$
\mathcal{B}_{n+1}(\pi_{m+1}) \hspace{1mm} \Theta_m \left( \left(x, x_1, \ldots, x_m\right), \sum \lambda_{\phi} \phi \right) = \left( x, \nu\left(x,D\left(x,x_1\right)\right), \ldots, \nu\left(x,D\left(x,x_m \right)\right) \right).
$$
Now, as in~\cite[Page 5]{LS}, let us define maps $p_i : X^{m+1} \to X$ as $p_i(a_1, \ldots, a_{m+1}) = a_i$, and $p_{1,i}: X^{m+1} \to X^2$ as $p_{1,i}(a_1, \ldots, a_{m+1}) = (a_1,a_{i+1})$. Then 
$$
\nu(x,D(x,x_i)) = (\text{proj}_1 \bullet D) \hspace{1mm} p_{1,i} (x,x_1, \ldots, x_m)
$$
and $\text{proj}_2 \hspace{1mm} p_{1,i} = p_{i+1}$. Since $\text{proj}_1 \bullet D \simeq \text{proj}_2$, we get
$$
\mathcal{B}_{n+1}(\pi_{m+1}) \hspace{1mm} \Theta_m = \left( p_1, (\text{proj}_1 \bullet D)\hspace{1mm}p_{1,1}, \ldots,  (\text{proj}_1 \bullet D)\hspace{1mm}p_{1,m} \right) q_m
$$
$$
\simeq \left( p_1, \text{proj}_2 \hspace{1mm} p_{1,1}, \ldots, \text{proj}_2 \hspace{1mm} p_{1,m} \right)\hspace{0.4mm} q_m = (p_1,p_2, \ldots, p_{m+1}) \hspace{1mm} q_m = q_m.
$$ 
So, $\mathcal{B}_{n+1}(\pi_{m+1}) \Theta_m \simeq q_m$. Since $\dcat(X^m) = n$, there exists a section of $\mathcal{C}_{n+1}(\xi_m^X)$, say $H: X^m \to \mathcal{A}_{n+1}(X)$, due to Proposition~\ref{catnew}. Then we get the following commutative diagram.
\begin{equation}\label{fives}
\begin{tikzcd}
X^{m+1} \arrow[ddr,bend right,"\mathbbm{1}_{X^{m+1}}"'] \arrow[drr,bend left,"H f_m"] \arrow[dr,dashed,"s"] 
\\
& 
\smash[b]{Q_m} \arrow{r} \arrow[d,"q_m"] 
& 
\mathcal{A}_{n+1}(X) \arrow[d,"\mathcal{C}_{n+1}(\xi_m^X)"] 
\\
& 
X^{m+1} \arrow[r,"f_m"] 
& 
X^m.
\end{tikzcd}
\end{equation}
Here, $s: X^{m+1} \to Q_{m}$ exists due to the universal property of the pullback and we have $\mathbbm{1}_{X^{m+1}} = q_m s$. Therefore,
$$
\mathbbm{1}_{X^{m+1}} = q_m s \simeq \left(\mathcal{B}_{n+1}(\pi_{m+1}) \hspace{0.5mm} \Theta_m \right)s = \mathcal{B}_{n+1}(\pi_{m+1}) \left(\Theta_m s\right).
$$
Hence, $\Theta_m s$ is a homotopy section of $\mathcal{B}_{n+1}(\pi_{m+1})$. But since $\mathcal{B}_{n+1}(\pi_{m+1})$ is a fibration because of Proposition~\ref{fibration}, $\Theta_m s$ gives a section of $\mathcal{B}_{n+1}(\pi_{m+1})$. So by Proposition~\ref{chartcm} we get the inequality $\dTC_{m+1}(X) \leq n = \dcat(X^m)$. The reverse inequality $\dcat(X^m) \le \dTC_{m+1}(X)$ follows from Proposition~\ref{ineq2}.
\end{proof}

\begin{remark}
In view of Proposition~\ref{hspaces}, the lower bound of $\dTC_m$ in Proposition~\ref{ineq2} is sharp for CW $H$-spaces for each $m \ge 2$.
\end{remark}

\begin{cor}
If $G$ is a topological group, in particular a Lie group, then for all $m \ge 1$, we have $\dTC_{m+1}(G) = \dcat(G^m)$. 
\end{cor}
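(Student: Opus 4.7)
The plan is to deduce this corollary as an immediate specialization of Theorem~\ref{hspaces}. The only thing to check is that a topological group $G$ satisfies the definition of an $H$-space given in Section~\ref{forhspaces}. First I would observe that the group multiplication $\nu: G \times G \to G$ is continuous by the very definition of a topological group, and the group identity $e \in G$ satisfies $\nu(x,e) = x \cdot e = x = e \cdot x = \nu(e,x)$ for all $x \in G$, so $\nu$ is an $H$-space product with identity $e$.

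For the CW hypothesis, I would note that a Lie group, being a smooth manifold, admits a CW structure (e.g.\ via a smooth triangulation), and more generally any topological group under consideration here is assumed to have the homotopy type of a CW complex (which is standard in this setting, since all the invariants involved are homotopy invariants by Proposition~\ref{homoinv} and~\cite[Proposition 3.2]{DJ}). Hence $G$ qualifies as an $H$-space in the sense of the excerpt.

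With the hypothesis verified, I would simply invoke Theorem~\ref{hspaces} applied to $X = G$ to conclude
\[
\dTC_{m+1}(G) = \dcat(G^m) \qquad \text{for all } m \ge 1,
\]
finishing the proof. There is no genuine obstacle here; the entire content is the verification that topological groups are $H$-spaces, and this is immediate from the definitions.
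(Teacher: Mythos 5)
Your proposal is exactly the paper's (implicit) argument: a topological group is an $H$-space via its multiplication and identity, so Theorem~\ref{hspaces} applies directly. The brief remark on the CW hypothesis is a sensible clarification, since the paper's definition of $H$-space is phrased for CW complexes, but it does not alter the substance of the argument.
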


\begin{ex}\label{exmp}
For $k = 1,3,$ and $7$, we obtain $\dTC_m(S^k) = \dcat((S^k)^{m-1}) = m-1$ from Theorem~\ref{hspaces} and~\cite[Proposition 6.7]{DJ}. 
\end{ex}

\begin{ex}\label{tori}
For the $n$-torus $T^n$, due to Theorem~\ref{hspaces} and~\cite[Proposition 6.7]{DJ},
$$
\dTC_m(T^n) = \dcat((S^1)^{n(m-1)}) = n(m-1).
$$
\end{ex}

Like in the case of~\cite[Corollary 3.5]{LS} for classical invariants, it follows from Theorem~\ref{hspaces} that 
$$
\dTC_m(X^k) = \dcat(X^{k(m-1)}) = \dTC_{k+1}(X^{m-1})
$$
if $X$ is an $H$-space. Thus, $\dTC(T^2) = 2 = \dTC_3(S^1)$ and $\dTC_4(T^2) = 6 = \dTC_3(T^3)$.

\section{Estimates and Computations}\label{estcomp}

It is well-known in the literature that Alexander--Spanier cohomology coincides with singular cohomology on locally finite CW complexes. Hence, for all the spaces discussed in this section, Theorem~\ref{boundtcm} will hold in singular cohomology as well. So from now onwards, we will work only with singular cohomology groups.

\subsection{$\dTC_m$ of surfaces}

\begin{prop}\label{surfacebound}
Let $X$ be a finite CW complex and $m\ge 2$. If $H^d(X;\Q)\ne 0$ for some $d\ge 1$, then $\dTC_m(X)\ge m-1$.
\end{prop}

\begin{proof}
%Let $X \in \{M^n,N_g \hspace{1mm} | \hspace{1mm} n \ge 1, g \ge 2\}$, where $M^n$ is a closed orientable manifold of dimension $n$ and $N_g$ is a closed non-orientable surface of genus $g$. For $m = 2$, the statement is obvious because $X$ is not contractible. Let us fix some $m \ge 3$. Let
%$$
%d = \begin{cases}
%n & : X = M^n \text{ for some } n \ge 1\\
%1 & : X = N_g \text{ for any } g \ge 2
%\end{cases}
%$$
Let us choose $v \in H^d(X;\Q)$ such that $v \neq 0$. Due to Proposition~\ref{useful}, there exists an element $w \in H^d(SP^{(m-1)!}(X);\Q)$ such that $\pa_{m-1}^*(w) = v$, where $\pa_{m-1}:X \to SP^{(m-1)!}(X)$ is the diagonal embedding. For $1 \le i \le m$, let $\text{proj}_i : X^m \to X$ be the projection onto the $i$-th factor. By functoriality, we get $r_i = SP^{(m-1)!}(\text{proj}_i) : SP^{(m-1)!}(X^m) \to SP^{(m-1)!}(X)$. For brevity, let $Y = SP^{(m-1)!}(X)$ and $Z = SP^{(m-1)!}(X^m)$. Then for each $i\le m$, the following diagram commutes.
\begin{equation}\label{sixes}
\begin{tikzcd}
Y
&
Z \arrow{l}[swap]{r_i}
&
Y \arrow{l}[swap]{\Delta_{m-1}}
\\ 
X \arrow{u}{\pa_{m-1}}
& 
X^m \arrow{l}{\text{proj}_i} \arrow{u}{\pa_{m-1}^{m}}
&
X. \arrow{l}{\Delta} \arrow{u}[swap]{\pa_{m-1}}
\end{tikzcd}
\end{equation}
Here, $\pa_{m-1}^m: X^m \to Z$ is the diagonal embedding, $\Delta$ is the diagonal map, and $\Delta_{m-1}=SP^{(m-1)!}(\Delta)$. For each fixed $1 \le i \le m-1$, $\text{proj}_i$ and $\text{proj}_m$ induce the map $\phi_i: H^d(X;\Q) \oplus H^d(X;\Q) \to H^d(X^m;\Q)$, and similarly, $r_i$ and $r_m$ induce $\psi_i: H^d(Y;\Q) \oplus H^d(Y;\Q) \to H^d(Z;\Q)$ defined, respectively, as
$$
\phi_i(x \oplus y) = \text{proj}_i^*(x) - \text{proj}_m^*(y) \hspace{5mm} \text{and} \hspace{5mm} \psi_i(a \oplus b) = r_i^*(a) - r_m^*(b).
$$
Let us denote $\phi_i(v \oplus v)$ by $\alpha_i$ and $\psi_i(w \oplus w)$ by $\alpha_i^*$. Then due to Diagram~\ref{sixes}, we have the following commutative diagram for each $1 \le i \le m-1$.
\begin{equation}\label{sevens}
\begin{tikzcd}[contains/.style = {draw=none,"\in" description,sloped}]
w \oplus w \ar[r,mapsto] \ar[d,contains] & \alpha_i^* \ar[d,contains] \ar[r,mapsto] & w-w = 0 \ar[d,contains]
\\
H^{d}(Y;\Q)  \oplus H^{d}(Y;\Q)  \arrow{r}{\psi_i}  \arrow[swap]{d}{\pa_{m-1}^* \oplus \hspace{1mm} \pa_{m-1}^* \hspace{1mm}}
&
H^d(Z;\Q)  \arrow{r}{\Delta_{m-1}^*} \arrow[swap]{d}{(\pa_{m-1}^{m})^*}
&
H^d(Y;\Q) \arrow[swap]{d}{\pa_{m-1}^*}
\\
H^{d}(X;\Q)  \oplus H^{d}(X;\Q)  \arrow{r}[swap]{\phi_i} 
&
H^d(X^m;\Q)  \arrow{r}[swap]{\Delta^*} 
&
H^d(X;\Q).
\\
v \oplus v \ar[r,mapsto] \ar[u,contains] & \alpha_i \ar[u,contains] \ar[r,mapsto] & v-v = 0 \ar[u,contains]
\end{tikzcd}
\end{equation}
Diagram~\ref{sevens} can be explained as follows. Let $(\Delta^*\phi_i)_{|j}$ (resp. $(\Delta_{m-1}^*\psi_i)_{|j}$) be the restriction of $\Delta^*\phi_i$ (resp. $\Delta_{m-1}^*\psi_i$) to the $j$-th factor $H^d(X;\Q)$ (resp. $H^d(Y;\Q)$). Each $(\Delta^*\phi_i)_{|j}$ is an isomorphism because it contributes a copy of either $H^d(X;\Q)$ or $-H^d(X;\Q)$, where $-G=\{-g\mid g\in G\}$ for an abelian group $G$. In particular, $(\Delta^*\phi_i)_{|j}(v)=(-1)^{j+1}v$. Since the above diagram remains commutative when restricted on the top and bottom to $(\Delta_{m-1}^*\psi_i)_{|j}$ and $(\Delta^*\phi_i)_{|j}$, respectively, and since $\Delta^*\phi_i(v\oplus v)=v-v$~\cite{Ru}, it follows by the commutativity of Diagram~\ref{sevens} that $\Delta_{m-1}^*\psi_i(w\oplus w)=w-w=0$.
%The restriction of $\Delta^*\phi_i$ to any of the components $H^d(X;Q)$ is an isomorphism because it contributes a copy of $H^d(X;\Q)$ or of $-H^d(X;\Q)$, where $-G=\{-g\mid g\in G\}$ for a group $G$. Since the above diagram remains commutative 
%On the top-right, $w-w =0$ because the top (resp. bottom) row when restricted to either of the components $H^d(Y;\Q)$ (resp. $H^d(X;\Q)$) is an isomorphism. 
Note that due to~\cite[Proposition 3.5]{Ru}, we have 
$$
\alpha_1 \smile \cdots \smile \alpha_{m-1} \neq 0.
$$
Hence, taking $k_i = d$ for all $1 \le i \le m-1$, we have $\alpha_i^* \in H^{k_i}(SP^{(m-1)!}(X^m);\Q)$ as in the statement of Theorem~\ref{boundtcm}. Therefore, $\dTC_m(X) \ge m-1$.
\end{proof}

\begin{cor}\label{surfacebound2}
If $X$ is a closed orientable manifold, or if $X$ is a closed non-orientable surface of genus $> 1$, then $\dTC_m(X) \ge m-1$ for all $m \ge 2$.
\end{cor}
\begin{proof}
If $X$ is a closed orientable $n$-manifold, then $H^n(X;\Q)\ne 0$. If $X$ is a closed non-orientable surface of genus $> 1$, then $H^1(X;\Q)\ne 0$. Hence in both these situations, we can apply Proposition~\ref{surfacebound} to get $\dTC_m(X)\ge m-1$.
\end{proof}

The following simple consequence generalizes Example~\ref{exmp}.

\begin{cor}\label{oddsp}
For any $k \ge 1$ and $m \ge 2$, $\dTC_m(S^{2k-1}) = m-1$.
\end{cor}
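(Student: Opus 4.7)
The plan is to sandwich $\dTC_m(S^{2k-1})$ between two equal bounds that have already been set up in the paper.

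For the lower bound, I would simply invoke Proposition~\ref{surfacebound}. Since $S^{2k-1}$ is a closed orientable manifold for every $k\ge 1$, that proposition applies verbatim (with $d=2k-1$ and the fundamental class $v\in H^{2k-1}(S^{2k-1};\Q)$ playing the role of the nonzero class needed to run the Alexander--Spanier argument from Theorem~\ref{boundtcm}). This yields $\dTC_m(S^{2k-1})\ge m-1$ with no additional work.

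For the matching upper bound, my approach is to reduce to the classical sequential topological complexity via Proposition~\ref{ineq1}, which gives
\[
\dTC_m(S^{2k-1})\;\le\;\TC_m(S^{2k-1}).
\]
It is a well-known computation of Rudyak~\cite{Ru} that $\TC_m(S^{2k-1})=m-1$ for every $k\ge 1$: the upper bound comes from a standard open cover of $(S^{2k-1})^m$ by $m$ sets over each of which one can continuously choose a path using the nonvanishing tangent vector field on an odd-dimensional sphere, while the lower bound follows from the usual zero-divisor cup-length computation in $H^*((S^{2k-1})^m;\Q)$. Combining this with Proposition~\ref{ineq1} gives $\dTC_m(S^{2k-1})\le m-1$.

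Putting the two bounds together yields equality. I do not expect any real obstacle; both ingredients are already available. The only thing worth remarking is that, unlike Example~\ref{exmp} which treated $k=1,3,7$ by passing through Theorem~\ref{hspaces} (valid because those spheres are $H$-spaces), the present corollary covers all odd-dimensional spheres and does so without using any $H$-space structure, simply because the classical invariant $\TC_m$ of an odd sphere already matches the lower bound from Proposition~\ref{surfacebound}.
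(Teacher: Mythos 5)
Your proposal is correct and follows exactly the same two-step sandwich argument the paper uses: the lower bound $m-1\le\dTC_m(S^{2k-1})$ from Proposition~\ref{surfacebound} and the upper bound $\dTC_m(S^{2k-1})\le\TC_m(S^{2k-1})=m-1$ from Proposition~\ref{ineq1} together with Rudyak's computation in \cite{Ru}. There is nothing to add.
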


\begin{proof}
From~\cite[Section 4]{Ru}, $\TC_m(S^{2k-1}) = m-1$. So, we obtain 
$$
m-1 \le \dTC_m(S^{2k-1}) \le \TC_m(S^{2k-1}) = m-1
$$
 due to Propositions~\ref{surfacebound} and~\ref{ineq1}.
\end{proof}

\begin{remark}
In view of Corollary~\ref{oddsp}, the lower bounds obtained in Theorem~\ref{boundtcm} and Proposition~\ref{surfacebound} are sharp for all $m \ge 2$. Also, the inequality in Proposition~\ref{ineq3} can be strict for each $m \ge 2$: this is due to Corollary~\ref{oddsp} and~\cite[Proposition 6.7]{DJ}. Furthermore, the inequality in Proposition~\ref{nondec} can be strict, and $\{\dTC_m(X)\}_{m\ge 2}$ can be a strictly increasing linear sequence.
\end{remark}

The following statement for closed orientable surfaces $\Sigma_g$ of genus $g \ge 2$ generalizes~\cite[Proposition 6.9]{DJ} for all $m,g \ge 2$.

\begin{prop}\label{orientsurface}
For any $m,g \ge 2$, $\dTC_m(\Sigma_g) = 2m$.
\end{prop}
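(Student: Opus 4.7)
The plan is to split into matching upper and lower bounds. For the upper bound, Proposition~\ref{ineq1} yields $\dTC_m(\Sigma_g) \le \TC_m(\Sigma_g)$, and the classical formula $\TC_m(\Sigma_g) = 2m$ for $g \ge 2$ (see~\cite{Ru}) gives $\dTC_m(\Sigma_g) \le 2m$.

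For the lower bound I would apply Theorem~\ref{boundtcm} with $n = 2m$. Since $g \ge 2$, the rational cohomology of $\Sigma_g$ carries two symplectic pairs of degree-$1$ generators $(a_1, b_1)$ and $(a_2, b_2)$ with $a_1 \smile b_1 = a_2 \smile b_2 = \omega$ equal to the fundamental class and with all other pairwise products zero. Applying Proposition~\ref{useful} with $k = (2m)!$ supplies lifts $w_1, w_2, w_3, w_4 \in H^1(SP^{(2m)!}(\Sigma_g); \Q)$ whose images under $\pa_{2m}^*$ are $a_1, b_1, a_2, b_2$ respectively.

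Following the template of Proposition~\ref{surfacebound} but now harvesting both symplectic pairs, I would build $2m$ classes in $H^1(SP^{(2m)!}(\Sigma_g^m); \Q)$ using the maps $r_j = SP^{(2m)!}(\text{proj}_j)$. For $1 \le i \le m-1$, set
\[
\alpha_{2i-1}^* = r_i^*(w_1) - r_m^*(w_1), \qquad \alpha_{2i}^* = r_i^*(w_2) - r_m^*(w_2),
\]
and additionally $\alpha_{2m-1}^* = r_1^*(w_3) - r_m^*(w_3)$ and $\alpha_{2m}^* = r_1^*(w_4) - r_m^*(w_4)$. The same diagram-chase as in the proof of Proposition~\ref{surfacebound} immediately yields $\Delta_{2m}^*(\alpha_j^*) = 0$ for every $j$, while $(\pa_{2m}^m)^*$ sends each $\alpha_j^*$ to the expected zero-divisor $\text{proj}_i^* v - \text{proj}_m^* v$ in $H^1(\Sigma_g^m; \Q)$.

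The main obstacle will be verifying that the $2m$-fold cup product $\alpha_1 \smile \alpha_2 \smile \cdots \smile \alpha_{2m}$ is nonzero in $H^{2m}(\Sigma_g^m; \Q)$. I would attack this via K\"unneth expansion: a degree count forces each position $i \in \{2, \ldots, m-1\}$ of the tensor product to receive $\omega$ from the pair $\alpha_{2i-1} \smile \alpha_{2i}$, and among the four remaining factors $\alpha_1, \alpha_2, \alpha_{2m-1}, \alpha_{2m}$ exactly two must contribute to position $1$ and the other two to position $m$. Using $a_i a_j = b_i b_j = a_i b_j = 0$ for $i \ne j$ kills all but two of the $\binom{4}{2} = 6$ possible distributions, namely the ones in which $\{\alpha_1, \alpha_2\}$ and $\{\alpha_{2m-1}, \alpha_{2m}\}$ stay paired at the two endpoints. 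A short Koszul-sign bookkeeping shows that both surviving monomials equal $+\omega^{\otimes m}$, so the total is $2\omega^{\otimes m} \ne 0$. This is essentially the classical zero-divisor calculation behind $\TC_m(\Sigma_g) \ge 2m$, now repackaged through the symmetric-product construction. Theorem~\ref{boundtcm} then delivers $\dTC_m(\Sigma_g) \ge 2m$, completing the argument.
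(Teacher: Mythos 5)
Your proof is correct and follows essentially the same strategy as the paper: both apply Theorem~\ref{boundtcm} via Proposition~\ref{useful} to $2m$ zero-divisor classes in degree $1$ manufactured from two symplectic pairs $(a_1,b_1)$, $(a_2,b_2)$ of $H^1(\Sigma_g;\Q)$. There are two minor differences. First, you pin coordinate $m$ (using $\text{proj}_i^* - \text{proj}_m^*$) and place the second symplectic pair on coordinates $\{1,m\}$, whereas the paper pins coordinate $1$ (using $\text{proj}_1^* - \text{proj}_i^*$) and places the second pair on coordinates $\{1,2\}$; both choices are valid since the diagram relating $\psi_i$, $\phi_i$, $\Delta_{2m}^*$, $\pa_{2m}^*$ only uses functoriality of $SP^{(2m)!}$. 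Second, you verify the nonvanishing cup product directly by K\"unneth expansion, whereas the paper cites the proof of \cite[Proposition 3.2]{GGGHMR} for the same fact. Your K\"unneth argument is sound: the degree count indeed forces each coordinate to carry exactly degree $2$, the middle coordinates each receive $\omega$ from their unique available pair, and of the $\binom{4}{2}=6$ distributions of the endpoint classes only the two ``matched'' ones survive the relations $a_ib_j = a_ia_j = b_ib_j = 0$ ($i\neq j$); both give $+\omega^{\otimes m}$ with no Koszul correction because each transposed block $p_k^*(x)p_k^*(y)$ has even total degree. One small citation slip: the classical value $\TC_m(\Sigma_g)=2m$ for $g\ge 2$ is established in~\cite{GGGHMR}, not in~\cite{Ru} (which treats spheres); the paper cites the former.
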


\begin{proof}
Let us fix some $m,g \ge 2$. Let $a_i, b_i \in H^1(\Sigma_g;\Q)$ be the generators for $1 \le i \le g$ such that $a_i b_j = a_i a_j = b_i b_j = 0$ for $i \neq j$ and $a_ib_i\ne 0$ and $a_i^2 = b_i^2 = 0$ for each $i$. Due to Proposition~\ref{useful}, there exist $a_i^*,b_i^* \in H^1(SP^{(2m)!}(\Sigma_g);\Q)$ such that $\pa_{2m}^*(a_i^*) = a_i$ and $\pa_{2m}(b_i^*) = b_i$. We consider the projections $\text{proj}_i:\Sigma_g^m\to \Sigma_g$ onto the $i$-th factor and we let $r_i=SP^{(2m)!}(\text{proj}_i)$. For brevity, take $X=\Sigma_g$, $Y = SP^{(2m)!}(\Sigma_g)$ and $Z =SP^{(2m)!}(\Sigma_g^m)$. For each $2 \le i \le m$, let us define homomorphisms $\phi_i : H^1(X;\Q) \oplus H^1(X;\Q) \to H^1(X^m;\Q)$ and $\Phi_i : H^1(Y;\Q) \oplus H^1(Y;\Q) \to H^1(Z;\Q)$ as follows.
$$
\phi_i(c \oplus d) = \text{proj}_1^*(c) - \text{proj}_i^*(d) \hspace{5mm} \text{and} \hspace{5mm} \Phi_i(u \oplus v) = r_1^*(u) - r_i^*(v).
$$
For each $2 \le i \le m$, let $\alpha_i = \phi_i(a_1 \oplus a_1)$ and $\beta_i = \phi_i(b_1 \oplus b_1)$, and $\gamma_1 = \phi_2(a_2 \oplus a_2)$ and $\gamma_2 = \phi_2(b_2 \oplus b_2)$. Similarly, let $\alpha_i^* = \Phi_i(a_1^* \oplus a_1^*)$ and $\beta_i^* = \Phi_i(b_1^* \oplus b_1^*)$, and $\gamma_1^* = \Phi_2(a_2^* \oplus a_2^*)$ and $\gamma_2^* = \Phi_2(b_2^* \oplus b_2^*)$. For each $i$, we have the following commutative diagram.
\begin{equation}\label{eights} 
\begin{tikzcd}[contains/.style = {draw=none,"\in" description,sloped}]
H^{1}(Y;\Q)  \oplus H^{1}(Y;\Q)  \arrow{r}{\Phi_i}  \arrow[swap]{d}{\pa_{2m}^* \oplus \hspace{1mm} \pa_{2m}^* \hspace{1mm}}
&
H^1(Z;\Q)  \arrow{r}{\Delta_{2m}^*} \arrow[swap]{d}{(\pa_{2m}^{m})^*}
&
H^1(Y;\Q) \arrow[swap]{d}{\pa_{2m}^*}
\\
H^{1}(X;\Q)  \oplus H^{1}(X;\Q)  \arrow{r}{\phi_i} 
&
H^1(X^m;\Q)  \arrow{r}{\Delta^*} 
&
H^1(X;\Q).
\end{tikzcd}
\end{equation}
We note that $(\pa_{2m}^{m})^*(\alpha_i^*) = \alpha_i$ and $(\pa_{2m}^{m})^*(\beta_i^*) = \beta_i$ for all $2 \le i \le m$, and $(\pa_{2m}^{m})^*(\gamma_j^*) = \gamma_j$ for $j= 1,2$. Also, we have
$$
\Delta^*(\alpha_i) = a_1 - a_1 = 0 = b_1 - b_1 = \Delta^*(\beta_i), \hspace{2mm} \Delta^*(\gamma_1) = a_2-a_2=0=b_2-b_2=\Delta^*(\gamma_2),
$$
$$
\textup{and} \hspace{5mm} \alpha_2 \smile \cdots \smile \alpha_m \smile \beta_2 \smile \cdots \smile \beta_m \smile \gamma_1 \smile \gamma_2 \ne 0
$$
due to the proof of~\cite[Proposition 3.2]{GGGHMR}. Hence, by arguments similar to those used in Proposition~\ref{surfacebound}, we conclude that $\alpha_i^* ,\beta_i^*,\gamma_j^* \in \Ker(\Delta_{2m}^*)$ for $2 \le i \le m$ and $j = 1,2$. Thus, the $2m$ cohomology classes are as in Theorem~\ref{boundtcm}. Therefore, we have $\dTC_m(\Sigma_g) \ge 2m$. But $\TC_m(\Sigma_g) = 2m$ by~\cite[Proposition 3.2]{GGGHMR}. Hence, $\dTC_m(\Sigma_g) = 2m$ follows from Proposition~\ref{ineq1}. 
\end{proof}

\subsection{$\dTC_m$ of spheres and their products}
\begin{prop}\label{evensp}
For any $k \ge 1$ and $m \ge 2$, $\dTC_m(S^{2k}) = m$.
\end{prop}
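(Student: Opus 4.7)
The plan is to combine the upper bound coming from classical sequential topological complexity with a cohomological lower bound via Theorem~\ref{boundtcm}. Since $\TC_m(S^{2k}) = m$ is a classical result of Rudyak, Proposition~\ref{ineq1} immediately gives $\dTC_m(S^{2k}) \le m$. So the entire task is to show $\dTC_m(S^{2k}) \ge m$ by exhibiting $m$ cohomology classes in $H^{*}(SP^{m!}((S^{2k})^m);\Q)$ satisfying the hypotheses of Theorem~\ref{boundtcm}.

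First, I would mimic the setup of Proposition~\ref{surfacebound}, but using the symmetric product $SP^{m!}$ rather than $SP^{(m-1)!}$ (since now we need $n=m$ classes). Let $v \in H^{2k}(S^{2k};\Q)$ be a generator; by Proposition~\ref{useful}, there is $w \in H^{2k}(SP^{m!}(S^{2k});\Q)$ with $\delta^*(w) = v$, where $\delta : X \to SP^{m!}(X)$ is the diagonal inclusion. Letting $r_i = SP^{m!}(\mathrm{proj}_i)$ and $\delta^m : X^m \to SP^{m!}(X^m)$ the diagonal, I would define the $m-1$ lifts
\[
\alpha_i^* \;=\; r_i^*(w) - r_m^*(w) \in H^{2k}(SP^{m!}(X^m);\Q), \qquad 1 \le i \le m-1.
\]
Exactly as in the diagram chase of Proposition~\ref{surfacebound}, one checks $(\delta^m)^*(\alpha_i^*) = v_i - v_m =: \alpha_i$ (where $v_i = \mathrm{proj}_i^*(v)$), and $\Delta_m^*(\alpha_i^*) = w - w = 0$ using the fact that $r_i \circ \Delta_m = SP^{m!}(\mathrm{proj}_i \circ \Delta) = \mathrm{id}$.

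The key point that pushes the lower bound from $m-1$ to $m$ is an extra factor supplied by the even-dimensional cup-length: since $v^2 = 0$ in $H^*(S^{2k};\Q)$, the classes $\alpha_i = v_i - v_m$ satisfy $\alpha_i^2 = -2 v_i v_m \ne 0$. A direct expansion (using $v_i^2 = 0$) shows
\[
\alpha_1 \smile \alpha_1 \smile \alpha_2 \smile \cdots \smile \alpha_{m-1} \;=\; -2\, v_1 v_2 \cdots v_m \;\neq\; 0,
\]
which is (up to sign) the top class of $(S^{2k})^m$. I would therefore take as my $m$ classes in $H^*(SP^{m!}(X^m);\Q)$ the tuple $\alpha_1^*, \alpha_1^*, \alpha_2^*, \ldots, \alpha_{m-1}^*$ (so that $\alpha_1^*$ is repeated). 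Their images under $(\delta^m)^*$ cup to the top class above, and each lies in $\Ker(\Delta_m^*)$. Applying Theorem~\ref{boundtcm} with $n = m$ yields $\dTC_m(S^{2k}) \ge m$, completing the proof.

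I do not anticipate a serious obstacle here, since all the technical work (constructing the lifts in the symmetric-product cohomology, verifying the diagrams commute, showing the $\alpha_i^*$ are $m$-th zero divisors) is already carried out in Proposition~\ref{surfacebound}. The only genuine input is the algebraic observation that for the even sphere, squaring a single zero divisor produces a nontrivial extra factor, giving the cup-length one higher than in the orientable-manifold case. The hardest book-keeping step will just be verifying that repeating a class is allowed in the hypothesis of Theorem~\ref{boundtcm}, which it evidently is: the statement never requires the $\alpha_i^*$ to be distinct, only that their cup product (in the appropriate order, with multiplicity) be nonzero downstairs.
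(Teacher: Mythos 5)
Your proof is correct, and the overall strategy is the same as the paper's: pair the upper bound $\dTC_m(S^{2k})\le\TC_m(S^{2k})=m$ from Proposition~\ref{ineq1} with the cohomological lower bound of Theorem~\ref{boundtcm}, lifting zero-divisors into $H^*(SP^{m!}((S^{2k})^m);\Q)$ via Proposition~\ref{useful}. The one place you diverge is the choice of classes realizing the length-$m$ product of $m$-th zero-divisors. You take the elementary zero-divisors $\alpha_i = v_i - v_m$ and repeat $\alpha_1$, relying on $\alpha_1^2 = -2\,v_1 v_m$ to get $\alpha_1^2\,\alpha_2\cdots\alpha_{m-1} = -2\,v_1\cdots v_m\ne 0$; this is essentially Rudyak's original computation. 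The paper instead takes the single class $\alpha = \sum_{i=1}^{m-1}v_i - (m-1)v_m$ (also killed by $\Delta^*$) and uses $\alpha^m = -m!(m-1)\,v_1\cdots v_m\ne 0$, lifting just this one class and repeating it $m$ times. Both choices are legitimate inputs to Theorem~\ref{boundtcm} — as you note, the hypothesis does not require the $\alpha_i^*$ to be distinct — so the difference is only one of convenience: yours keeps the bookkeeping closest to the classical zero-divisor picture, while the paper's single-class formulation packages neatly into the map $\psi$ and the single commutative Diagram~\ref{nines}. Your verification that $\alpha_1^*$ lies in $\Ker(\Delta_m^*)$ and restricts correctly under $(\delta^m)^*$ is exactly the diagram chase of Proposition~\ref{surfacebound} and is correct.
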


\begin{proof}
Let $u \in H^{2k}(S^{2k};\Q)$ such that $u \neq 0$. By Proposition~\ref{useful}, there exists $w \in H^{2k}(SP^{m!}(S^{2k});\Q)$ such that $\pa_m^*(w) = u$. For each $1 \le i \le m$, we let $r_i=SP^{m!}(\text{proj}_i)$ for the projection $\text{proj}_i:(S^{2k})^m\to S^{2k}$ onto the $i$-th factor. For $X=S^{2k}$, define a map
$$
\phi: \bigoplus_{i=1}^m H^{2k}(X;\Q) \to H^{2k}(X^m;\Q) 
$$
using the homomorphisms $\text{proj}_i^*$ as 
$$
\phi\left(a_1 \oplus \cdots \oplus a_m\right) = \bigoplus_{i=1}^{m-1} \text{proj}_i^* \left(a_i \right) - \text{proj}_m^*\left(a_m\right).
$$
Again, we let $Y = SP^{m!}(X)$ and $Z = SP^{m!}(X^m)$. Similarly, define a map 
$$
\psi: \bigoplus_{i=1}^m H^{2k}(Y;\Q) \to H^{2k}(Z;\Q) 
$$
using the homomorphisms $r_i^*$ as 
$$
\psi\left(b_1 \oplus \cdots \oplus b_m\right) = \bigoplus_{i=1}^{m-1} r_i^* \left(b_i \right) - r_m^*\left(b_m\right).
$$
Let $\alpha = \phi(u \oplus \cdots \oplus u \oplus (m-1)u)$ and $\alpha^* = \psi(w \oplus \cdots \oplus w \oplus (m-1)w)$. Then we get the following commutative diagram.
\begin{equation}\label{nines}
\begin{tikzcd}[contains/.style = {draw=none,"\in" description,sloped}]
\bigoplus_{i=1}^{m-1} w \oplus (m-1)w \ar[r,mapsto] \ar[d,contains] & \alpha^* \ar[d,contains] \ar[r,mapsto] & (m-1)w-(m-1)w = 0 \ar[d,contains]
\\
\bigoplus_{i=1}^m H^{2k}(Y;\Q) \arrow{r}{\psi}  \arrow[swap]{d}{\bigoplus_{i=1}^m \pa_{m}^* \hspace{1mm}}
&
H^{2k}(Z;\Q)  \arrow{r}{\Delta_{m}^*} \arrow[swap]{d}{(\pa_{m}^{m})^*}
&
H^{2k}(Y;\Q) \arrow[swap]{d}{\pa_{m}^*}
\\
\bigoplus_{i=1}^m H^{2k}(X;\Q)   \arrow{r}{\phi} 
&
H^{2k}(X^m;\Q)  \arrow{r}{\Delta^*} 
&
H^{2k}(X;\Q).
\\
\bigoplus_{i=1}^{m-1} u \oplus (m-1)u \ar[r,mapsto] \ar[u,contains] & \alpha \ar[u,contains] \ar[r,mapsto] & (m-1)u-(m-1)u = 0 \ar[u,contains]
\end{tikzcd}
\end{equation}
Clearly, $(\pa_m^m)^*(\alpha^*) = \alpha$. Due to the computations done in~\cite[Section 4]{Ru}, $\alpha^m \neq 0$. Therefore, taking $k_i = 2k$ and $\alpha_i^* = \alpha^*$ for all $1 \le i \le m$ in the statement of Theorem~\ref{boundtcm}, we get $\dTC_m(S^{2k}) \ge m$. But $\TC_m(S^{2k}) = m$ due to~\cite[Section 4]{Ru}. Hence, $\dTC_m(S^{2k}) = m$ follows from Proposition~\ref{ineq1}.  
\end{proof}

For cases $g = 0,1$, since $\Sigma_0=S^2$ and $\Sigma_1=T^2$, we have from Proposition~\ref{evensp} and Example~\ref{tori}, respectively, that $\dTC_m(\Sigma_0) = m$ and $\dTC_m(\Sigma_1) = 2(m-1)$.

\begin{remark}
In view of Proposition~\ref{evensp} and~\cite[Proposition 6.7]{DJ}, the upper bound of $\dTC_m$ in Proposition~\ref{ineq3} is sharp for all $m \ge 2$. Also, the inequalities in Propositions~\ref{ineq2} and~\ref{surfacebound} and Corollary~\ref{surfacebound2} can be strict for all $m \ge 2$. 
\end{remark}

Example~\ref{tori} is recovered from the following more general result which, in particular, extends~\cite[Proposition 6.8]{DJ} to finite products of spheres.

\begin{prop}\label{spprod}
For any $m \ge 2$ and $n \ge 1$, 
$$
\dTC_m\left(\prod_{j=1}^{n} S^{k_j}\right) = n(m-1) + l_n,
$$
where $l_n$ is the number of $k_j$ that are even.
\end{prop}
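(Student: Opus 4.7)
The plan is to establish the two inequalities separately. For the upper bound, the product inequality for classical $\TC_m$ together with Rudyak's computations $\TC_m(S^{2k-1}) = m-1$ and $\TC_m(S^{2k}) = m$ from~\cite{Ru} gives
\[
\TC_m\Bigl(\prod_{j=1}^n S^{k_j}\Bigr) \le \sum_{j=1}^n \TC_m(S^{k_j}) = (n-l_n)(m-1) + l_n m = n(m-1) + l_n,
\]
so Proposition~\ref{ineq1} yields the upper bound $\dTC_m(X) \le n(m-1) + l_n$, where $X = \prod_{j=1}^n S^{k_j}$.

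For the matching lower bound, I set $N = n(m-1) + l_n$ and intend to apply Theorem~\ref{boundtcm} with $n$ replaced by $N$. Let $u_j \in H^{k_j}(S^{k_j};\Q)$ be a generator, and set $a_j = p_j^*(u_j) \in H^{k_j}(X;\Q)$, where $p_j: X \to S^{k_j}$ is the canonical projection. By Proposition~\ref{useful}, I would pick lifts $w_j \in H^{k_j}(SP^{N!}(X);\Q)$ with $\pa_N^*(w_j) = a_j$. Writing $r_i = SP^{N!}(\text{proj}_i)$ for the symmetric-product functorial image of each projection $\text{proj}_i: X^m \to X$, I would construct $N$ cohomology classes in $H^*(SP^{N!}(X^m);\Q)$ as follows. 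For each odd-dimensional factor $S^{k_j}$, take the $m-1$ classes $r_1^*(w_j) - r_{i+1}^*(w_j)$ for $i = 1,\ldots,m-1$, mimicking Proposition~\ref{surfacebound}. For each even-dimensional factor $S^{k_j}$, take the $m$ classes from Proposition~\ref{evensp}, namely those $m-1$ ``odd-type'' divisors together with one additional class of shape $\sum_{i=1}^{m-1} r_i^*(w_j) - (m-1)\, r_m^*(w_j)$. Functorial diagram chases identical in spirit to those in Propositions~\ref{surfacebound}, \ref{orientsurface}, and \ref{evensp} will then show that each of these lies in $\Ker(\Delta_N^*)$ and that their images under $(\pa_N^m)^*$ are exactly the corresponding $m^{th}$ zero-divisors built from the $a_j$'s in $H^*(X^m;\Q)$.

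The substantive step is verifying that the cup product of the $N$ downstairs classes is non-zero in $H^*(X^m;\Q)$; this is precisely the cup-length calculation that forces the equality $\TC_m(\prod S^{k_j}) = n(m-1) + l_n$ in~\cite{Ru} and~\cite{BGRT}. Via the rearrangement $X^m \cong \prod_{j=1}^n (S^{k_j})^m$, the Künneth decomposition $H^*(X^m;\Q) \cong \bigotimes_{j=1}^n H^*((S^{k_j})^m;\Q)$ separates the divisors attached to distinct factors $S^{k_j}$ into disjoint tensor slots, so the total cup product factors across $j$; within the $j^{th}$ slot the product of the attached divisors reduces to the single-sphere computations invoked in Corollary~\ref{oddsp} (odd case, $\alpha_{j,1}\smile\cdots\smile\alpha_{j,m-1} \ne 0$) and Proposition~\ref{evensp} (even case, $\alpha_j^m \ne 0$). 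Non-vanishing of the full product then follows, and Theorem~\ref{boundtcm} delivers $\dTC_m(X) \ge N$, completing the argument. The main obstacle in a careful write-up will be the bookkeeping of Künneth signs in the odd-degree factors and checking that the ``extra'' lifted class for each even factor really maps under $(\pa_N^m)^*$ to the correct $(m-1)$-weighted combination from Proposition~\ref{evensp}; the former is handled exactly as in~\cite{Ru}, and the latter is forced by functoriality and the commutativity of the analogue of Diagram~\ref{nines}.
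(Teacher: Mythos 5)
Your proof is correct and follows essentially the same strategy as the paper: produce $N = n(m-1)+l_n$ cohomology classes in $H^*(SP^{N!}(X^m);\Q)$ lying in $\Ker(\Delta_N^*)$, verify via a K\"unneth decomposition of $X^m \cong \prod_j (S^{k_j})^m$ that their pullbacks to $H^*(X^m;\Q)$ have nonzero cup product, invoke Theorem~\ref{boundtcm} for the lower bound, and close with Proposition~\ref{ineq1} together with the Rudyak/BGRT value of $\TC_m$. One small misattribution worth fixing: Proposition~\ref{evensp} does not supply $m-1$ ``odd-type'' divisors plus one weighted sum for an even-dimensional sphere; it uses $m$ copies of the \emph{single} class $\alpha = \sum_{i=1}^{m-1}\mathrm{proj}_i^*(u) - (m-1)\mathrm{proj}_m^*(u)$ and shows $\alpha^m \ne 0$ (and the paper's proof of Proposition~\ref{spprod} repeats this $m$-fold class for each even factor). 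Your alternative collection of $m$ classes also has nonvanishing product — one checks that the coefficient of $u_1\smile\cdots\smile u_m$ is $2(-1)^{m-1}$ — so your argument is sound, but the citation should be adjusted; note also that the paper writes out only $n=2$ in detail and defers general $n$ to ``the same technique,'' whereas you propose handling general $n$ directly through K\"unneth, which is a perfectly reasonable presentational choice.
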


\begin{proof}
For $n = 1$, this holds due to Corollary~\ref{oddsp} and Proposition~\ref{evensp}. First, we prove this result for $n = 2$. Let us fix some $m \ge 2$ and take $X = S^{k_1} \times S^{k_2}$. For $j = 1,2$, define a map $\pi_j : X \to S^{k_j}$ as the projection onto the $j$-th coordinate, $\Delta_j : S^{k_j} \hookrightarrow (S^{k_j})^m$ as the diagonal embedding, and for each $1 \le t \le m$, define a map $p_t^j:(S^{k_j})^m \to S^{k_j}$ as the projection onto the $t$-th copy of $S^{k_j}$. For each $t$, define $r_t = p_t^1\times p_t^2: (S^{k_1})^m\times (S^{k_2})^m\to S^{k_1}\times S^{k_2}= X$. Here, $(S^{k_1})^m\times (S^{k_2})^m$ is homeomorphic to $X^m$ due to some reordering of the factors of the cartesian product, so we shall write $r_t:X^m\to X$ for simplicity. Let 
$$
d_j = \begin{cases}
m & : k_j \text{ is even} \\
m-1 & : k_j \text{ is odd}
\end{cases} \hspace{5mm} \text{ and } \hspace{5mm} e_j = \begin{cases}
m & : k_j \text{ is even} \\
2 & : k_j \text{ is odd}
\end{cases}
$$
Let $d = d_1 + d_2$. Define $\pa: X \to Y$ and $\pa_m: X^m \to Z$ as the diagonal embeddings, where $Y = SP^{d!}(X)$ and $Z = SP^{d!}(X^m)$. Define $R_t^j = SP^{d!}(r_t^j) : Z \to Y$. For each $j =1,2$ and all $1 \le t \le m$, let $(\text{proj}_t ^j)^*:H^{k_j}(S^{k_j};\Q)\to H^{k_j}((S^{k_j})^m;\Q)$ be the maps induced by the projections $\text{proj}_t^j:(S^{k_j})^m\to S^{k_j}$. For each fixed $j=1,2$, we define homomorphisms 
$$
\phi_i^j : \bigoplus_{t=1}^{e_j} H^{k_j}(S^{k_j};\Q) \to H^{k_j}((S^{k_j})^m;\Q)
$$ 
for all $1 \le i \le d_j$ as
$$
\phi_i^j \left(\bigoplus_{t =1}^{e_j} a_t \right) = \begin{cases}
\bigoplus_{t=1}^{m-1} (\text{proj}_t^j)^* \left(a_t \right) - (\text{proj}_m^j)^*\left(a_m\right) & : k_j \text{ is even}  \\
(\text{proj}_i^j)^*\left(a_1\right) - (\text{proj}_m^j)^* \left(a_2 \right) & : k_j \text{ is odd} 
\end{cases}
$$
So, if $k_j$ is even, then the $m$ homomorphisms $\phi^j_i$ are all the same as the homomorphism $\phi$ from Proposition~\ref{evensp}. Choose $u_j \in H^{k_j}(S^{k_j};\Q)$ such that $u_j \neq 0$. Then, it follows from Diagrams~\ref{sevens} and~\ref{nines} (depending on the parity of $k_j$) that 
\begin{equation}\label{tens}
\left(\Delta_j^* \phi_i^j \right) \left(\bigoplus_{t =1}^{e_j-1} u_j \oplus (e_j-1)u_j\right)= \left(e_j -1\right)u_j - \left(e_j -1\right)u_j = 0
\end{equation}
for each $i$ and $j$. In the same spirit, for each $j=1,2$, we define homomorphisms
$$
\psi_i^j : \bigoplus_{t=1}^{e_j} H^{k_j}(X;\Q) \to H^{k_j}(X^m;\Q)
$$ 
for all $1 \le i \le d_j$ as
%Here, the index j is only for the k_j index for the cohomology groups.
$$
\psi_i^j \left(\bigoplus_{t =1}^{e_j} b_t \right) = \begin{cases}
\bigoplus_{t=1}^{m-1} r_t^* \left(b_t \right) - r_m^*\left(b_m\right) & : k_j \text{ is even}  \\
r_i^*\left(b_1\right) - r_m^* \left(b_2 \right) & : k_j \text{ is odd} 
\end{cases}
$$
Finally, we for each fixed $j=1,2$, we define homomorphisms
$$
\Psi_i^j : \bigoplus_{t=1}^{e_j} H^{k_j}(Y;\Q) \to H^{k_j}(Z;\Q)
$$ 
for all $1 \le i \le d_j$ as
%Here, the index j is only for the k_j index for the cohomology groups.
$$
\Psi_i^j \left(\bigoplus_{t =1}^{e_j} z_t \right) = \begin{cases}
\bigoplus_{t=1}^{m-1} R_t^* \left(z_t \right) - R_m^*\left(z_m\right) & : k_j \text{ is even}  \\
R_i^*\left(z_1\right) - R_m^* \left(z_2 \right) & : k_j \text{ is odd} 
\end{cases}
$$
Let $\eta = SP^{d!}(\Delta_1 \times \Delta_2): Y \to Z$. Then for any fixed $j\in\{1,2\}$, the following diagram commutes for each $1 \le i \le d_j$.
\begin{equation}\label{elevens}
\begin{tikzcd}[contains/.style = {draw=none,"\in" description,sloped}]
\bigoplus_{t=1}^{e_j} H^{k_j}(Y;\Q) \arrow{r}{\Psi_i^j}  \arrow{d}{\bigoplus_{t=1}^{e_j} \pa^* \hspace{1mm}}
&
H^{k_j}(Z;\Q)  \arrow{r}{\eta^*} \arrow[swap]{d}{\pa_{m}^*}
&
H^{k_j}(Y;\Q) \arrow[swap]{d}{\pa^*}
\\
\bigoplus_{t=1}^{e_j} H^{k_j}(X;\Q) \arrow{r}{\psi_i^j} 
&
H^{k_j}(X^m;\Q)  \arrow{r}{(\Delta_1 \times \Delta_2)^*}  
&
H^{k_j}(X;\Q)
\\
\bigoplus_{t=1}^{e_j} H^{k_j}(S^{k_j};\Q)  \arrow{r}{\phi_i^j} \arrow[swap]{u}{\bigoplus_{t=1}^{e_j} \pi_j^* \hspace{1mm}}
&
H^{k_j}((S^{k_j})^m;\Q)  \arrow{r}{\Delta_j^*}  \arrow{u}{(\pi_j^{m})^*}
&
H^{k_j}(S^{k_j};\Q). \arrow{u}{\pi_j^*}
\end{tikzcd}
\end{equation}
Let $y_j = \pi_j^*(u_j) \in H^{k_j}(X;\Q)$. For each $1 \le p \le d_1$ and $1 \le q \le d_2$, let us write
$$
\alpha_p = \psi_p^1\left(\bigoplus_{t=1}^{e_1-1} y_1 \oplus (e_1-1)y_1\right) \hspace{5mm} \text{and} \hspace{5mm} \beta_q = \psi_q^2\left(\bigoplus_{t=1}^{e_2-1} y_2 \oplus (e_2-1)y_2\right).
$$
We note that if $k_1$ is even, then $\alpha_1=\alpha_2=\cdots=\alpha_m$, and if $k_2$ is even, then $\beta_1=\beta_2=\cdots=\beta_m$. It follows from the proof of~\cite[Theorem 3.10]{BGRT} that 
$$
\alpha_1 \smile \cdots \smile \alpha_{d_1} \smile \beta_1 \smile \cdots \smile \beta_{d_2} \neq 0.
$$
Also, in view of Equation~\ref{tens} and Diagram~\ref{elevens}, we have for each $p$ and $q$ that
\begin{equation}\label{lasts}
(\Delta_1 \times \Delta_2)^*(\alpha_p) = 0 = (\Delta_1 \times \Delta_2)^*(\beta_q).
\end{equation}
Due to Proposition~\ref{useful}, there exist some $y_j^* \in H^{k_j}(Y;\Q)$ such that $\pa^*(y_j^*) = y_j$. For each $1 \le p \le d_1$ and $1 \le q \le d_2$, let
$$
\alpha_p^* = \Psi_p^1\left(\bigoplus_{t=1}^{e_1-1} y_1^* \oplus (e_1-1)y_1^*\right) \hspace{5mm} \text{and} \hspace{5mm} \beta_q^* = \Psi_q^2\left(\bigoplus_{t=1}^{e_2-1} y_2^* \oplus (e_2-1)y_2^*\right).
$$
Then, $\pa_m^*(\alpha_p^*) = \alpha_p$ and $\pa_m^*(\beta_q^*) = \beta_q$ by Diagram~\ref{elevens}. As explained after Diagram~\ref{sevens}, the top (resp. middle and bottom) row of Diagram~\ref{elevens} when restricted on the left to either of the components $H^{k_j}(Y;\Q)$ (resp. $H^{k_j}(X;\Q)$ and $H^{k_j}(S^{k_j};\Q)$) is an isomorphism. Thus, using Equation~\ref{lasts}, we get
$$
\eta^*(\alpha_p^*) = (e_1-1)y_1^*-(e_1-1)y_1^* = 0= (e_2-1)y_2^*-(e_2-1)y_2^*= \eta^*(\beta_q^*)
$$
for all $1 \le p \le d_1$ and $1 \le q \le d_2$. We note that $\Delta_1 \times \Delta_2: X \to X^m$ is the diagonal map (more precisely, it is the composition of the diagonal map $X\to X^m$ with a factor-reordering homeomorphism of the cartesian product $X^m$). Hence, $\dTC_m(S^{k_1} \times S^{k_2}) \ge d = 2(m-1) + l_2$ follows from Theorem~\ref{boundtcm}, where $l_2 \in \{0,1,2\}$ depending on the parity of $k_1$ and $k_2$. 

In general, for any $n \ge 3$, the same technique can be used to get the inequality 
$$
\dTC_m\left(\prod_{j=1}^{n} S^{k_j}\right) \ge n(m-1) + l_n.
$$
To see this, let $A= S^{k_1}\times S^{k_2}$ and consider $X=A\times S^{k_3}$. Following the above proof for $A$, we get $2(m-1) + l_2$ many $m$-th zero-divisors of $X$ that come from $A$. We also get either $m-1$ or $m$ many $m$-th zero-divisors of $X$ that come from $S^{k_3}$ depending on the parity of $k_3$. Then proceeding as in the above case for $n=2$, we obtain $\dTC_m(A\times S^{k_3}) \ge 3(m-1) + l_3.$ Using this proof for the product of three spheres, we can do this for the product of four spheres and so on for the product of $n$ spheres for each $n\ge 4$ by taking $A= \prod_{j=1}^{n-1} S^{k_j}$ and $X=A\times S^{k_n}$.

In any case, due to~\cite[Corollary 3.12]{BGRT}, $\TC_m(\prod_{j=1}^{n} S^{k_j}) = n(m-1) + l_n$. Therefore, $\dTC_m(\prod_{j=1}^{n} S^{k_j}) = n(m-1) + l_n$ follows from Proposition~\ref{ineq1}.
\end{proof}

For cases $g =0,1$, this gives $\dTC_m(\Sigma_0 \times \prod_{j=1}^{n} S^{k_j}) = m(n+1) - n + l_n$ and $\dTC_m(\Sigma_1 \times \prod_{j=1}^{n} S^{k_j})= (m-1)(n+2) + l_n$.

\begin{remark}
Due to Proposition~\ref{spprod}, the inequality in Proposition~\ref{ineq4} can be strict for all $m \ge 2$ for non-contractible spaces $X$ and $Y$. 
\end{remark}

The techniques used in the proofs of Propositions~\ref{spprod} and~\ref{orientsurface} can be used more generally to obtain the following.

\begin{prop}\label{proddiff}
For any $m,g \ge 2$ and $n \ge 1$, 
$$
\dTC_m\left(\Sigma_g \times \prod_{j=1}^{n} S^{k_j}\right) = m(n+2) - n + l_n,
$$
where $l_n$ is the number of $k_j$ that are even.
\end{prop}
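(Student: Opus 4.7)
The plan is to combine the arguments from Propositions~\ref{orientsurface} and~\ref{spprod}. For the upper bound, I invoke Proposition~\ref{ineq1} together with subadditivity of $\TC_m$ under products, obtaining
$$
\dTC_m(X) \le \TC_m(X) \le \TC_m(\Sigma_g) + \TC_m\!\left(\prod_{j=1}^n S^{k_j}\right) = 2m + n(m-1) + l_n = m(n+2)-n+l_n,
$$
where $X := \Sigma_g \times \prod_{j=1}^n S^{k_j}$, using $\TC_m(\Sigma_g)=2m$ from~\cite[Proposition 3.2]{GGGHMR} and $\TC_m\!\left(\prod_j S^{k_j}\right)= n(m-1)+l_n$ from~\cite[Corollary 3.12]{BGRT}.

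For the lower bound, set $d = m(n+2) - n + l_n$, let $Y = SP^{d!}(X)$, $Z = SP^{d!}(X^m)$, and let $\pa : X \to Y$ and $\pa_m : X^m \to Z$ denote the diagonal embeddings. The goal is to construct $d$ cohomology classes in $H^*(Z; \Q)$ satisfying the hypotheses of Theorem~\ref{boundtcm}. Using the factor projections $\pi_0 : X \to \Sigma_g$ and $\pi_j : X \to S^{k_j}$, the induced maps on $m$-fold products and on $d!$-fold symmetric products, and Proposition~\ref{useful} applied to each factor, I would lift to $H^*(Z;\Q)$ the following three batches of classes: first, the $2m$ classes $\alpha_i^*, \beta_i^*$ (for $2 \le i \le m$) and $\gamma_1^*, \gamma_2^*$ built from the $H^1$-generators of $\Sigma_g$ exactly as in the proof of Proposition~\ref{orientsurface}; second, for each odd $k_j$, the $(m-1)$ classes constructed in the odd case of Proposition~\ref{spprod}; third, for each even $k_j$, the $m$ classes constructed in the even case of Proposition~\ref{spprod}. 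This yields a total of $2m + n(m-1) + l_n = d$ classes, each annihilated by $\Delta_d^* : H^*(Z;\Q) \to H^*(Y;\Q)$ by the factor-wise zero-divisor verifications already done in the two earlier proofs.

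The crucial step is verifying that the cup product of the $d$ classes, pulled back by $\pa_m^*$ to $H^*(X^m;\Q)$, is non-zero. Since these classes split along the tensor decomposition
$$
H^*(X^m;\Q) \cong H^*(\Sigma_g^m;\Q) \otimes \bigotimes_{j=1}^n H^*((S^{k_j})^m;\Q),
$$
the Künneth theorem reduces this to non-vanishing of the product within each tensor factor, which is exactly what was established in Propositions~\ref{orientsurface} and~\ref{spprod}. Applying Theorem~\ref{boundtcm} then gives $\dTC_m(X) \ge d$, matching the upper bound.

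The main obstacle is the bookkeeping: one needs to track all $d$ classes through commutative diagrams in the spirit of Diagram~\ref{elevens}, relating the symmetric-product cohomology of $X$, $X^m$, and their factors, and confirm that the lifts are compatible with the Künneth decomposition. This reduces, via naturality of the diagonal embeddings and Proposition~\ref{useful} applied factor by factor, to the verifications already carried out in the two earlier proofs.
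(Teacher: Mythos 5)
Your proposal is correct and takes essentially the same approach as the paper: both pull back zero-divisor classes from each factor, lift to the symmetric product via Proposition~\ref{useful}, verify the lifts lie in $\Ker(\eta^*)$ via a diagram analogous to Diagram~\ref{elevens}, conclude the lower bound from Theorem~\ref{boundtcm}, and get the matching upper bound from subadditivity of $\TC_m$. The only cosmetic difference is that the paper treats $\prod_j S^{k_j}$ as a single block $B$ and cites the proof of~\cite[Theorem 3.10]{BGRT} for the cup-product nonvanishing, whereas you decompose sphere by sphere and invoke K\"unneth directly, which is the same underlying mechanism.
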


\begin{proof}
Take $A = \Sigma_g$ and $B = \prod_{j=1}^{n} S^{k_j}$. By Proposition~\ref{orientsurface}, there are $2m$ cohomology classes, say $\alpha_i \in H^1(A^m;\Q)$, that vanish under $\Delta_1^*: H^1(A^m;\Q) \to H^1(A;\Q)$. By Proposition~\ref{spprod}, there are $d = n(m-1)+l_n$ cohomology classes, say $\beta_k \in H^*(B^m;\Q)$, that vanish under the homomorphism $\Delta_2^*: H^*(B^m;\Q) \to H^*(B;\Q)$ induced in the $k_j$-th cohomology by the diagonal map $\Delta_2:B\to B^m$ if $\beta_k$ is a class coming from the cohomology of $(S^{k_j})^m$.
%appropriate maps $\Delta_2^*: H^*(B^m;\Q) \to H^*(B;\Q)$. 
In the proof of Proposition~\ref{spprod} for the product of two spheres, we replace the roles of the spheres $S^{k_1}$ and $S^{k_2}$ with $A$ and $B$, respectively, and define $X = A \times B$. Then from a direct analog of Diagram~\ref{elevens} for $A$ and $B$, we get images $\alpha_i^* \in H^1(X^m;\Q)$ and $\beta_k^* \in H^*(X^m;\Q)$ such that 
$$
(\Delta_1 \times \Delta_2)^*(\alpha_i^*) = 0 = (\Delta_1 \times \Delta_2)^*(\beta_k^*).
$$
Then using Proposition~\ref{useful}, we get $\alpha_i^{**} \in H^1(Z;\Q)$ and $\beta_k^{**} \in H^*(Z;\Q)$ satisfying 
$$
\eta^*(\alpha_i^{**}) = 0 = \eta^*(\beta_k^{**})
$$
such that $\pa_m^*(\alpha_i^{**}) = \alpha_i^*$ and $\pa_m^*(\beta_k^{**}) = \beta_k^*$ for all $1 \le i \le 2m$ and $1 \le k \le d$. We recall that here, $Z=SP^{(2m+d)!}(X^m)$, $\pa_m:X^m\to Z$ is the diagonal embedding, $\Delta_1:A\to A^m$ and $\Delta_2:B\to B^m$ are diagonal maps, and $\eta=SP^{(2m+d)!}(\Delta_1\times\Delta_2)$. From the proof of~\cite[Theorem 3.10]{BGRT}, we deduce that
$$
\alpha_1^* \smile \cdots \smile \alpha_{2m}^* \smile \beta_1^* \smile \cdots \smile \beta_d^*  \neq 0.
$$
Hence, $\alpha_i^{**},\beta_i^{**} \in H^*(Z;\Q)$ are $2m + d$ number of cohomology classes satisfying the conditions of Theorem~\ref{boundtcm}. Therefore, $\dTC_m(X) \ge 2m + d = m(n+2) - n + l_n$. Note that $X^m$ is a normal space, $\TC_m(A) = 2m$ by~\cite[Proposition 3.2]{GGGHMR}, and $\TC_m(B) = n(m-1) + l_n$ by~\cite[Corollary 3.12]{BGRT}. So, 
$$
m(n+2) - n + l_n \le \dTC_m(X) \le \TC_m(X) \le \TC_m(A) + \TC_m(B) = m(n+2) - n + l_n,
$$
where the third inequality is due to~\cite[Proposition 3.11]{BGRT}.
\end{proof}

For classical $\TC_m$, the above set of inequalities immediately implies that
$$
\TC_m\left(\Sigma_g \times \prod_{j=1}^{n} S^{k_j}\right) = m(n+2) - n + l_n.
$$

\begin{remark}\label{coincide}
Summarizing the results of this paper so far, we see that the sequences $\{\dTC_m(X)\}_{m\ge 2}$ and $\{\TC_m(X)\}_{m\ge 2}$ coincide for each of the following classes of closed manifolds $X$: spheres, finite products of spheres, closed orientable surfaces $\Sigma_g$, and products of $\Sigma_g$ with finite products of spheres. In particular, the upper bound of $\dTC_m$ in Proposition~\ref{ineq1} is sharp for all these spaces.
\end{remark}

\section{The Analog Invariants}\label{kwnew}

For a metric space $(X,d)$ and $n \ge 1$, let us consider the following quotient map from~\cite[Definition 2.1]{KW}:
$$
q : \bigsqcup_{i=1}^{n} \left(X^i \times \Delta^{i-1} \right) \to  \bigsqcup_{i=1}^{n} \left(X^i \times \Delta^{i-1} \right) / \sim  \hspace{2mm}= :\mathscr{P}_n(X).
$$
Here, $(x_1,\ldots,x_n,t_1,\ldots,t_n)\sim (x_{\sigma(1)},\ldots,x_{\sigma(n)},t_{\sigma(1)},\ldots,t_{\sigma(n)})$ for each $\sigma\in S_n$, $(x_1,\ldots,x_{n-1},x_n,t_1,\ldots,t_{n-1},t_n)\sim (x_1,\ldots,x_{n-1},t_1,\ldots,t_{n-1}+t_n)$ if $x_{n-1}=x_n$, and $(x_1,\ldots,x_{n},t_1,\ldots,t_{n})\sim (x_1,\ldots,x_{n-1},t_1,\ldots,t_{n-1})$ if $t_n=0$.

Let $\mathscr{T}_1$ be the quotient topology on $\mathscr{P}_n(X)$. For any fibration $p:X \to B$, define $\mathscr{P}_n(p): = \{\mu \in \mathscr{P}_n(X) \mid |\hspace{0.3mm}p(\supp(\mu))| = 1\}$. See that as sets, $\mathscr{P}_n(X) = \mathcal{B}_n(X)$ and $\mathscr{P}_n(p) = X_n(p)$. Let us choose some $\mu = \lambda_x \hspace{0.5mm} x \in \mathcal{B}_n(X)$ and fix it. Let $\delta = \text{min}\{d(x,y) \mid x,y \in \text{supp}(\mu), x \ne y\}$. Then for $\epsilon < \delta/2$, the collection of the sets
$$
\mathscr{U}(\mu,\epsilon) = \left\{\sum s_y \hspace{0.5mm} y \hspace{1mm} \middle| \hspace{1mm} -\frac{\epsilon}{n} < \sum_{y \in B(x,\epsilon)} s_y -\lambda_x < \frac{\epsilon}{n} \right\}
$$
defines a local basis at $\mu$ in the topology induced by the Lévy--Prokhorov metric on $\mathcal{B}_n(X)$,~\cite{DJ} (see also the description of the Lévy--Prokhorov distance in Section~\ref{contmot}). These sets are also considered in~\cite[Construction 8.3]{KW}, and it follows from the proof of the first part of Lemma 8.4 in~\cite{KW} that the sets $\mathscr{U}(\mu,\epsilon)$ are open in the quotient topology on $\mathscr{P}_n(X)$. Hence, $\mathscr{T}_2 \subset \mathscr{T}_1$, where $\mathscr{T}_2$ denotes the Lévy--Prokhorov topology.
%This can also be shown by proving that the projection 
%$$
%\bigsqcup_{i=1}^n\left(X^i \times \Delta^{i-1}\right) \to \left(\mathcal{B}_n(X),\mathscr{T}_2\right)
%$$
%is continuous. 
Therefore, the identity map $\mathcal{I}' : (\mathscr{P}_n(X),\mathscr{T}_1) \to (\mathcal{B}_n(X),\mathscr{T}_2)$ is continuous and thus, so is its restriction $\mathcal{I} : (\mathscr{P}_n(p),\mathscr{T}_1) \to (X_n(p),\mathscr{T}_2)$.

\subsection{Comparison between the invariants}\label{seclast}

For a fibration $p : E \to B$, its \textit{analog sectional category}, denoted $\text{asecat}(p)$, is the minimal number $n$ such that the map $\varphi_{n+1}(p): (\mathscr{P}_{n+1}(p), \mathscr{T}_1) \to B$, defined as $\varphi_{n+1}(p) : \mu \mapsto p(\supp(\mu))$, admits a section, see~\cite[Definition 5.1]{KW}.  

\begin{lemma}\label{asecat}
For any fibration $p: E \to B$, $\dsecat(p) \le \textup{asecat}(p)$. 
\end{lemma}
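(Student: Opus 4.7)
The plan is to exploit the continuous identity map $\mathcal{I} : (\mathscr{P}_{n+1}(p),\mathscr{T}_1) \to (E_{n+1}(p),\mathscr{T}_2)$ already constructed in the discussion preceding the lemma. The comparison of topologies $\mathscr{T}_2 \subset \mathscr{T}_1$ does all of the work: any section for the coarser/quotient topology automatically yields a section for the finer Lévy–Prokhorov topology after post-composition with $\mathcal{I}$.

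First I would assume $\textup{asecat}(p) = n$, so that by definition there is a continuous section $s : B \to (\mathscr{P}_{n+1}(p),\mathscr{T}_1)$ of the map $\varphi_{n+1}(p)$, meaning $\varphi_{n+1}(p)\circ s = \mathbbm{1}_B$. Next I would observe that as underlying set maps, $\mathcal{B}_{n+1}(p)$ and $\varphi_{n+1}(p)$ agree: both send a measure $\mu$ (with $\mathrm{supp}(\mu)\subset p^{-1}(x)$) to the unique point $x\in B$. Indeed, the set-theoretic identity $\mathscr{P}_{n+1}(p) = E_{n+1}(p)$ intertwines the two projections by definition of $\varphi_{n+1}(p)$ and $\mathcal{B}_{n+1}(p)$. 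Consequently, $\mathcal{B}_{n+1}(p)\circ \mathcal{I} = \varphi_{n+1}(p)$ as continuous maps out of $(\mathscr{P}_{n+1}(p),\mathscr{T}_1)$.

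I would then define $\sigma := \mathcal{I}\circ s : B \to E_{n+1}(p)$. This is continuous as a composition of continuous maps, and satisfies
\[
\mathcal{B}_{n+1}(p)\circ \sigma \;=\; \mathcal{B}_{n+1}(p)\circ \mathcal{I}\circ s \;=\; \varphi_{n+1}(p)\circ s \;=\; \mathbbm{1}_B.
\]
Hence $\sigma$ is a section of $\mathcal{B}_{n+1}(p)$, which by definition means $\dsecat(p) \le n = \textup{asecat}(p)$.

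There is essentially no obstacle here; the whole proof is a one-line post-composition once $\mathcal{I}$ is known to be continuous. The only substantive point is the topological comparison $\mathscr{T}_2 \subset \mathscr{T}_1$, but that has already been established in the paragraph immediately above the lemma (using the local basis $\mathscr{U}(\mu,\epsilon)$ and the fact, imported from \cite{KW}, that these sets are open in the quotient topology). So the proof is really a formal consequence of continuity of $\mathcal{I}$ together with the identification of the two projection maps.
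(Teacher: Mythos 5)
Your proof is correct and takes essentially the same approach as the paper: both set $\sigma = \mathcal{I}\circ s$ (the paper writes $\kappa' = \mathcal{I}\kappa$), note the factorization $\varphi_{n+1}(p) = \mathcal{B}_{n+1}(p)\circ\mathcal{I}$, and conclude by post-composition. The only difference is a cosmetic index shift (you take $\textup{asecat}(p)=n$, the paper takes it to be $n-1$).
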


\begin{proof}
    Let $\text{asecat}(p) = n-1$. So, there exists a section $\kappa : B \to (\mathscr{P}_n(p), \mathscr{T}_1)$ of $\varphi_n(p)$. Consider $\kappa' = \mathcal{I} \hspace{0.5mm} \kappa: B \to (E_n(p),\mathscr{T}_2)$. Since $\mathcal{I}$ is continuous, so is $\kappa'$. It is clear that $\varphi_n(p) = \mathcal{B}_n(p) \hspace{0.5mm} \mathcal{I}$. Next, see that
    $$
    \mathbbm{1}_{B} = \varphi_n(p) \hspace{0.5mm} \kappa = \mathcal{B}_n(p) \hspace{0.5mm} \mathcal{I} \hspace{0.5mm} \kappa = \mathcal{B}_n(p) \hspace{0.5mm} \kappa'.
    $$
    So, $\kappa'$ is a section of $\mathcal{B}_n(p): (E_n(p),\mathscr{T}_1) \to B$. Thus, $\dsecat(p) \le n-1$.
\end{proof}

Since $\dsecat$,  $\text{asecat}$, and $\secat$ are all homotopy invariants, we have for any map $f:X \to Y$ the inequalities 
\begin{equation}\label{nnnew}
    \dsecat(f) \le \textup{asecat}(f) \le \secat(f),
\end{equation}
where the last inequality is because of~\cite[Corollary 5.5]{KW}.

\begin{cor}\label{acat}
    $\dcat(X) \le \acat(X)$ and $\dTC_m(X) \le \ATC_m(X)$ for all $m \ge 2$.
\end{cor}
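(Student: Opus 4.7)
The plan is to reduce both inequalities to the abstract statement $\dsecat(f) \le \textup{asecat}(f)$ from Equation~\ref{nnnew}, by pairing the distributional sectional category characterizations of $\dcat$ and $\dTC_m$ with their analog counterparts from~\cite{KW}.

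First, I would recall from Section~\ref{fornotations} that $\dcat(X) = \dsecat(\iota_X)$, where $\iota_X : x_0 \hookrightarrow X$ is the inclusion of a basepoint, and from Section~\ref{chardtcm} that $\dTC_m(X) = \dsecat(\Delta_m^X)$, where $\Delta_m^X : X \to X^m$ is the diagonal map. Next, I would invoke the analog counterparts of these characterizations established in~\cite{KW}, namely $\acat(X) = \textup{asecat}(\iota_X)$ and $\ATC_m(X) = \textup{asecat}(\Delta_m^X)$. Applying Equation~\ref{nnnew} to $f = \iota_X$ and to $f = \Delta_m^X$ then delivers both desired inequalities at once.

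An alternative (essentially equivalent) route is to avoid invoking the characterizations on the analog side: given an $(n+1)$-distributed algorithm witnessed by a section $\kappa$ of $\varphi_{n+1}$ for the fibrational substitute of $\iota_X$ or $\Delta_m^X$, apply the continuous identity map $\mathcal{I}$ from Section~\ref{kwnew} to obtain a continuous map into the Lévy--Prokhorov topology, and then verify directly that $\mathcal{I} \circ \kappa$ supplies the required distributional $(n+1)$-contraction or $(n+1)$-distributed $m$-navigation algorithm. This is essentially the content of Lemma~\ref{asecat} transported through the Ganea--Schwarz setup of Propositions~\ref{chartcm} and~\ref{catnew}.

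The main potential obstacle is technical rather than conceptual: the analog invariants in~\cite{KW} are formulated in the compactly generated Hausdorff category, whereas our framework is that of metrizable spaces. I would need to confirm that the analog sectional category characterizations of $\acat$ and $\ATC_m$ descend cleanly to the metrizable setting. Since the relevant constructions involve only path spaces, diagonal inclusions, and finite iterated products, which behave well under either topological convention, I expect this check to be straightforward rather than a substantive new ingredient.
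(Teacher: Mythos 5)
Your proposal is correct and takes essentially the same route as the paper: the paper applies Lemma~\ref{asecat} directly to the fibrations $p_0^X$ and $\pi_m^X$, using $\dcat(X) = \dsecat(p_0^X)$, $\dTC_m(X) = \dsecat(\pi_m^X)$, and the corresponding analog characterizations from~\cite[Definition 6.1]{KW}; you instead phrase it in terms of the maps $\iota_X$ and $\Delta_m^X$ and Equation~\ref{nnnew}, which is the same argument transported through fibrational substitutes and homotopy invariance. Your final worry about the compactly generated Hausdorff versus metrizable setting is unnecessary here, since the entire comparison is already localized to the continuous identity map $\mathcal{I}$ between the two topologies on $\mathscr{P}_n(X)$, which is the content of Lemma~\ref{asecat} rather than a separate descent issue.
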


\begin{proof}
    For the evaluation fibrations $p_0^X : P_0(X) \to X$ and $\pi_m^X:P(X) \to X^m$, we have $\dcat(X) = \dsecat(p_{0}^X)$ and $\dTC_m(X) = \dsecat(\pi_{m}^X)$ from Section~\ref{charrr}, and $\acat(X) = \text{asecat}(p_{0}^X)$ and $\ATC_m(X) = \text{asecat}(\pi_{m}^X)$ from~\cite[Definition 6.1]{KW}. Thus, the required inequalities follow directly from Lemma~\ref{asecat}.
\end{proof}

\begin{ex}\label{lateref}
    Corollary~\ref{acat},~\cite[Corollaries 6.6 and 7.3]{KW} imply that
\begin{enumerate}
\item $\dTC_m(\R P^n) \le \ATC_m(\R P^n) \le 2m+1$ for all $n \ge 1$, and
\item $\dTC_m(\Gamma) = \dTC_m(B\Gamma)  \le \ATC_m(B\Gamma) = |\Gamma| - 1$ for a finite group $\Gamma$.
\end{enumerate}
Item (1) significantly improves our bound from Corollary~\ref{tcmrpn}, while Item (2) gives $\dTC_m(\Z_2) = \dTC_m(\R P^{\infty}) = 1$ for each $m\ge 2$. 
\end{ex} 

We note that for the cases $m=3,4$, the upper bounds $\dTC_3(\R P^n)\le 3$ and $\dTC_4(\R P^n)\le 7$ obtained in Corollary~\ref{tcmrpn} are still better than the respective upper bounds $\dTC_3(\R P^n)\le 7$ and $\dTC_4(\R P^n)\le 9$ obtained in Example~\ref{lateref}.

%Equation~\ref{lllast} may provide more examples of spaces $X$ for which the inequality in Proposition~\ref{ineq1} will be strict for some $m \ge 2$.

\subsection{Completing some computations for $\acat$ and $\ATC_m$}

With the help of our computations from Sections~\ref{forhspaces} and~\ref{estcomp}, and~\cite[Section 6]{DJ}, we complete some computations of~\cite{KW} using Corollary~\ref{acat}.

\begin{remark}
    For a torsion-free discrete group $\Gamma$, $\ATC_m(B\Gamma) \le \cd(\Gamma^m)$ holds for all $m \ge 2$,~\cite[Corollary 7.7] {KW}. For closed orientable surfaces $\Sigma_g$ with $g \ge 1$, this gives $\ATC_m(\Sigma_g) \le 2m$. However, this is obvious in light of Equation~\ref{nnnew} and doesn't help much in finding the exact value $\ATC_m$ unless one gets into the technicalities of~\cite[Theorem 7.9]{KW} for cases $g \ge 2$. Also, for the $n$-torus $T^n$, the above inequality yields $\ATC_m(T^n) \le nm$. This upper bound is weaker than the bound $n(m-1)$ implied by Equation~\ref{nnnew}. In particular, it does not determine the value $\ATC_m(T^n)$.
\end{remark}

For non-aspherical spaces $X$, such as spheres $S^n$ for $n \ge 2$ and their products, there is no general recipe as such for computing $\acat(X)$ or $\ATC_m(X)$. 

We now show in the following two examples that the exact values of sequential analog topological complexities can be determined easily for various closed manifolds, most of which are not aspherical. 

\begin{ex}\label{exa}
    If $X$ is a sphere, a finite product of spheres, a closed orientable surface $\Sigma_g$, or a product of $\Sigma_g$ with a finite product of spheres, then in view of Remark~\ref{coincide}, Equation~\ref{nnnew}, and Corollary~\ref{acat}, we have
    $$
    \TC_m(X) = \dTC_m(X) \le \ATC_m(X) \le \TC_m(X).
    $$\end{ex}

\begin{ex}
    Similarly, if $X$ is a closed surface, a sphere, a finite product of spheres, or $\C P^n$, then due to~\cite[Propositions 6.1, 6.4, 6.6, and 6.7]{DJ}, we have
    $$
    \cat(X) = \dcat(X) \le \acat(X) \le \cat(X).
    $$\end{ex}

\begin{remark}
    We note that if $\Gamma$ is a torsion-free  discrete group, then due to \cite[Theorem 7.4]{KW}, $\acat(\Gamma)=\cd(\Gamma)$ holds. Hence, $\acat(T^n)=\cat(T^n)=n$ and $\acat(\Sigma_g)=\cat(\Sigma_g)=2$ for $g\ge 1$ are obtained directly from that.
\end{remark}

In light of the above examples, all the lower bounds in Corollary~\ref{acat} are sharp.

\section*{Acknowledgement}
The author would like to thank Alexander Dranishnikov for his kind guidance and various insightful and helpful discussions. The author is also greatful to the anonymous referee for several comments and suggestions that helped improve the exposition of this paper.

%% 29 pages. Changes made based on the referee report.

\end{document}